\newtheorem{thm}{Theorem}[section]
\newtheorem{prop}[thm]{Proposition}
\newtheorem{cor}[thm]{Corollary}
\newtheorem{lem}[thm]{Lemma}
\newtheorem{Mthm}[thm]{Main Theorem}
\theoremstyle{definition}
\newtheorem{defn}[thm]{Definition}
\newtheorem{rmk}[thm]{Remark}
\newtheorem{exmp}[thm]{Example}
\newtheorem{assumption}[thm]{Assumption}
\let\c@equation\c@thm
\numberwithin{equation}{section}
\newcommand{\conf}{\mathrm{Conf}}
\newcommand{\Gr}{\mathrm{Gr}}
\newcommand{\SL}{\mathrm{SL}}
\newcommand{\DT}{\mathrm{DT}}
\newcommand{\Aug}{\mathrm{Aug}}
\newcommand{\G}{\mathsf{G}}
\newcommand{\R}{\mathsf{R}}
\newcommand{\K}{\mathsf{K}}
\newcommand{\Br}{\mathsf{Br}}
\newcommand{\cC}{\mathcal{C}}
\newcommand{\cR}{\mathcal{R}}
\newcommand{\bbR}{\mathbb{R}}
\newcommand{\bbZ}{\mathbb{Z}}
\setlist[enumerate]{itemsep=0mm}
\begin{document}

\title[Positive Braid Links with Infinitely Many Fillings]{Positive Braid Links with Infinitely Many Fillings}
\author{Honghao Gao, Linhui Shen, and Daping Weng}
\date{}
\address{Honghao Gao \\ Department of Mathematics \\ Michigan State University}
\email{\href{mailto:gaohongh@msu.edu}{gaohongh@msu.edu}}

\address{Linhui Shen \\ Department of Mathematics \\ Michigan State University}
\email{\href{mailto:linhui@math.msu.edu}{linhui@math.msu.edu}}

\address{Daping Weng \\ Department of Mathematics \\ Michigan State University}
\email{\href{mailto:wengdap1@msu.edu}{wengdap1@msu.edu}}

\maketitle

\begin{abstract}
    We prove that any positive braid Legendrian link not isotopic to a standard finite type link admits infinitely many exact Lagrangian fillings.
\end{abstract}

\section{Introduction}
To this day, the only complete classification of exact Lagrangian fillings of a given Legendrian knot was the unique filling of the unknot with maximal Thurston-Bennequin number \cite{EP}. Most subsequent work focuses on giving a lower bound on the number of distinct fillings, which is typically achieved by constructing fillings explicitly and distinguishing them using an invariant. Known constructions of fillings include (1) decomposable Lagrangian fillings \cite{EHK12}, (2) conjugate Lagrangians for alternating Legendrians \cite{STWZ}, and (3) free Legendrian weaves \cite{TZ,CZ}. The invariants used to distinguish these fillings include augmentations \cite{EHK12, Pan_2017} and microlocal sheaves \cite{STWZ}. 

The question whether there exists a Legendrian link admitting infinitely many exact Lagrangian fillings remained open until the year 2020. Several methods emerged concurrently and each successfully solved the problem for a class of Legendrian links.

\begin{itemize}
    \item \cite{CasalsGao} Any torus $(n,m)$-link except $(2,m), (3,3), (3,4), (3,5)$ admits infinitely many Lagrangian fillings. The proof uses Legendrian loops, microlocal sheaves, and cluster algebras.
    
    \item \cite{CZ} The closure of the braid $\left[\beta_{p,q}\right]\in \Br_3^+$ admits infinitely many Lagrangian fillings, where
    $$\beta_{p,q}=(s_1^3s_2)(s_1^3s_2^2)^ps_1^3s_2(s_2^2s_1^3)^q(s_2s_1^3)(s_2^{q+1}s_1^2s_2^{p+2}) , \quad p,q\in \mathbb{N}_{\geq 1}.$$ 
    The proof uses Legendrian weaves, sheaves, and cluster algebras.
    
    \item The upcoming work \cite{CN} shows that certain Legendrian links (not necessarily positive braid links) have Legendrian loops of infinite order, using  Legendrian contact dga.
\end{itemize}

In this paper, we solve the infinitely many fillings problem for positive braid links using cluster structure on augmentation varieties \cite{GSW} and Donaldson-Thomas transformations of cluster varieties \cite{GS2, SWflag}.

\begin{Mthm}\label{Main} If a positive braid Legendrian link is not Legendrian isotopic to a split union of unknots and connect sums of standard $\mathrm{ADE}$ links, then it admits infinitely many non-Hamiltonian isotopic exact Lagrangian fillings.
\end{Mthm}

A $n$-strand  braid word $\beta$ is a finite sequence of the letters $s_1, \ldots, s_{n-1}$. Every braid word $\beta$ determines a quiver $Q_\beta$  by the following equivalent constructions: wiring diagrams \cite{FZ, BFZ}, amalgamation \cite{FGamalgamation}, and brick diagrams \cite{Rudolph,BLL}. In detail,  $Q_\beta$ can be constructed by the following steps. 

\begin{enumerate}
    \item Plot $\beta$ and replace each crossing by a vertical bar $\mathsf{I}$. We get a ``wall of bricks''.
    \item Draw a vertex at each compact brick.
    \item For any two adjacent bricks on the same level, draw a rightward horizontal arrow connecting them. For any two adjacent bricks forming a Z or S pattern, draw a leftward arrow connecting them.  
\end{enumerate}

A quiver is of \emph{infinite type} if it  has  infinitely many isomorphism classes of indecomposable representations; otherwise it is of \emph{finite type}. A \emph{Dynkin quiver} is a quiver whose underlying undirected graph is one of the Dynkin diagrams: $\mathrm{A}_r,\mathrm{D}_r, \mathrm{E}_6, \mathrm{E}_7, \mathrm{E}_8$. By a theorem of Gabriel \cite{Gab}, a quiver is of finite type if and only if it is a disjoint union of Dynkin quivers.

\begin{exmp}
The braid word $s_1^{r}s_2s_1^3s_2$ gives rise to an $\mathrm{E}_{r+3}$ quiver. For example, an $\mathrm{E}_9$ quiver is obtained as follows:

\[
\begin{tikzpicture}

\draw [gray] (0,0) -- (12,0);
\draw [gray] (0,0.8) -- (12,0.8);
\draw [gray] (0,1.6) -- (12,1.6);
\draw [gray] (1,0.8) -- (1, 1.6);
\draw [gray] (2,0.8) -- (2, 1.6);
\draw [gray] (3,0.8) -- (3, 1.6);
\draw [gray] (4,0.8) -- (4, 1.6);
\draw [gray] (5,0.8) -- (5, 1.6);
\draw [gray] (6,0.8) -- (6, 1.6);
\draw [gray] (7,0) -- (7, 0.8);
\draw [gray] (8,0.8) -- (8, 1.6);
\draw [gray] (9,0.8) -- (9, 1.6);
\draw [gray] (10,0.8) -- (10, 1.6);
\draw [gray] (11,0) -- (11, 0.8);

\node (1) at (1.5,1.2) [] {$\bullet$};
\node (2) at (2.5,1.2) [] {$\bullet$};
\node (3) at (3.5,1.2) [] {$\bullet$};
\node (4) at (4.5,1.2) [] {$\bullet$};
\node (5) at (5.5,1.2) [] {$\bullet$};
\node (6) at (8,0.4) [] {$\bullet$};
\node (7) at (7,1.2) [] {$\bullet$};
\node (8) at (8.5,1.2) [] {$\bullet$};
\node (9) at (9.5,1.2) [] {$\bullet$};

\draw [->, thick] (1) -- (2);
\draw [->, thick] (2) -- (3);
\draw [->, thick] (3) -- (4);
\draw [->, thick] (4) -- (5);
\draw [->, thick] (5) -- (7);
\draw [->, thick] (7) -- (8);
\draw [->, thick] (8) -- (9);
\draw [->, thick] (6) -- (7);
\end{tikzpicture}
\]
\end{exmp}

The equivalence class $[\beta]$ of braid words $\beta$ modulo the relations $s_is_js_i=s_js_is_j$ for $|i-j|=1$ and $s_is_j=s_js_i$ for $|i-j|\geq 2$ is a positive braid. The collection $\Br_n^+$ of $n$-strand positive braids under juxtaposition forms a semigroup. It is known that different braid words of the same braid yield mutation equivalent quivers (see \cite{FZI} for the definition of quiver mutations). We say a positive braid $[\beta]$ is of \emph{finite type} if its associated quiver $Q_\beta$ is mutation equivalent to a quiver of finite type; otherwise it is  of \emph{infinite type}.

The link closure of a positive braid $[\beta]$ admits a unique Legendrian representative $\Lambda_\beta$ with maximal Thurston-Bennequin number by the rainbow closure construction \cite{EV}. 

\begin{defn} \label{stad.links}
For each of the Dynkin diagrams, we define its \emph{standard link} to be the rainbow closures of the following positive braid words.
\begin{center}
    \begin{tabular}{|c|c|c|c|c|} \hline
    \rule{0pt}{2.3ex} $\Br_2^+$ & \multicolumn{4}{c|}{$\Br_3^+$} \\[0.045cm] \hline
    \rule{0pt}{2.3ex} $\mathrm{A}_r$     &  $\mathrm{D}_r$ & $\mathrm{E}_6$  & $\mathrm{E}_7$ & $\mathrm{E}_8$ \\[0.045cm] \hline
    \rule{0pt}{2.3ex} $s_1^{r+1}$ &     $ s_1^{r-2}s_2s_1^2s_2$ & $s_1^3s_2s_1^3s_2$ & $s_1^4s_2s_1^3s_2$ & $s_1^5s_2s_1^3s_2$ \\[0.045cm] \hline
    \end{tabular}
\end{center}
\end{defn}

\medskip

The infinitely many fillings problem refers to whether there exists a Legendrian link admitting infinitely many Hamiltonian-isotopic classes of exact Lagrangian fillings. We solve it by explicitly constructing infinitely many \emph{admissible fillings}. An {admissible} filling/cobordism/concordance is by definition a composition of  saddle cobordisms,  cyclic rotations, braid moves (R3), and  minimum cobordisms (\cite[Definition 1.3]{GSW}).

\smallskip

Here is a summary of several key results needed for the proof of Main Theorem \ref{Main}. 
In Section \ref{sec 2}, we prove that if $Q_\beta$ is acyclic and of infinite type, then $\Lambda_\beta$ admits infinitely many admissible fillings (Corollary \ref{rainbowinfinite}).
Building upon this result, we prove in Section \ref{sec 3} that if a positive braid $[\beta]$ is of infinite type, then $\Lambda_\beta$ admits infinitely many admissible fillings (Theorem \ref{MainStep1}).
In Section \ref{sec 4}, we prove that if a positive braid $[\beta]$ is finite type, then $\Lambda_\beta$ is Legendrian isotopic to a split union of unknots and connect sums of standard $\mathrm{ADE}$ links (Theorem \ref{4.6}). Main Theorem \ref{Main} follows from the dichotomy between finite and infinite types for positive braids. 

We discuss further applications in Section \ref{sec.5.app}.
\medskip

\noindent \textbf{Acknowledgement.} We thank everyone acknowledged in \cite{GSW} for their support on the overarching project. In particular, we would like to thank Roger Casals, Bernhard Keller, and Eric Zaslow for useful discussions and advice on references.

\section{DT Transformation and Infinitely Many Fillings} \label{sec 2}

Let $\Lambda_\beta$ be the rainbow closure Legendrian link associated with an $n$-strand braid word $\beta$, with a marked point near each right cusp. Let $\mathbb{F}$ denote an algebraically closed field of characteristic $2$ and let $\Aug(\Lambda_\beta)$ denote the $\mathbb{F}$-augmentation variety.

In \cite{GSW}, we construct cluster $\mathrm{K}_2$ structures on augmentation varieties $\Aug\left(\Lambda_\beta\right)$ and prove that admissible fillings of $\Lambda_\beta$ induce cluster seeds on $\Aug\left(\Lambda_\beta\right)$. Theorem 1.4 in \textit{loc,cit} states that if two admissible fillings induce distinct cluster seeds on $\Aug\left(\Lambda_\beta\right)$, then they are not Hamiltonian isotopic. In this section we will utilize this result to prove the existence of Legendrian links with infinitely many fillings.

\subsection{Full Cyclic Rotation}

Let $\beta=s_i\beta'$ be a braid word starting with the letter $s_i$. The cyclic rotation $\rho$ is a Legendrian isotopy from  $\Lambda_{s_i\beta'}$ to $\Lambda_{\beta's_i}$, illustrated by the following moves on the front projection of Legendrian links
\[
\begin{tikzpicture}[baseline=25,scale =0.5]
    \draw (1,0) rectangle node [] {$\beta'$} (3,1.5);
    \draw  (3,0.25) to [out=0,in=180] (5.5,2) to [out=180,in=0] (3, 3.75) -- (0,3.75) to [out=180,in=0] (-2.5,2) to [out=0,in=180] (0,0.25) to [out=0,in=180] (1, 0.75);
    \draw (3,0.75) to [out=0,in=180] (4.75,2) to [out=180,in=0] (3, 3.25) -- (0,3.25) to [out=180,in=0] (-1.75,2) to [out=0,in=180] (0,0.75) to [out=0,in=180] (1, 0.25);
    \draw (3,1.25)  to [out=0,in=180] (4,2) to [out=180,in=0] (3, 2.75) -- (0,2.75) to [out=180,in=0] (-1,2) to [out=0,in=180] (0,1.25) -- (1,1.25);
    \node at (0.5, 0) [] {$s_i$};
\end{tikzpicture}
\quad \rightsquigarrow \quad
\begin{tikzpicture}[baseline=25,scale =0.5]
    \draw (1,0) rectangle node [] {$\beta'$} (3,1.5);
    \draw  (3,0.25) to [out=0,in=180] (5.5,2) to [out=180,in=0] (3, 3.75) -- (1,3.75) to [out=180,in=0] (0, 3.25) to [out=200,in=0] (-2.3,1.7) to [out=0,in=180] (0,0.25) to [out=0,in=180] (1, 0.75);
    \draw (3,0.75) to [out=0,in=180] (4.75,2) to [out=180,in=0] (3, 3.25) -- (1,3.25) to [out=180,in=0] (0, 3.75) to [out=180,in=0] (-2.3,2.3) to [out=0,in=160] (0,0.75) to [out=0,in=180] (1, 0.25);
    \draw (3,1.25)  to [out=0,in=180] (4,2) to [out=180,in=0] (3, 2.75) -- (0,2.75) to [out=180,in=0] (-1,2) to [out=0,in=180] (0,1.25) -- (1,1.25);
\end{tikzpicture}
\quad \rightsquigarrow \quad
\begin{tikzpicture}[baseline=25,scale =0.5]
    \draw (1,0) rectangle node [] {$\beta'$} (3,1.5);
    \draw  (3,0.25) to [out=0,in=180] (5.5,2) to [out=180,in=0] (3, 3.75) -- (2.5, 3.75) to [out=180,in=0] (1.5, 3.25) -- (1,3.25) to [out=180,in=0] (-0.75,2) to [out=0,in=180] (1,0.75);
    \draw (3,0.75) to [out=0,in=180] (4.75,2) to [out=180,in=0] (3, 3.25) -- (2.5, 3.25) to [out=180,in=0] (1.5, 3.75) -- (1,3.75) to [out=180,in=0] (-1.5,2) to [out=0,in=180] (1,0.25);
    \draw (3,1.25)  to [out=0,in=180] (4,2) to [out=180,in=0] (3, 2.75) -- (1,2.75) to [out=180,in=0] (0,2) to [out=0,in=180] (1,1.25);
\end{tikzpicture}
\]
\[
\qquad\qquad\qquad \rightsquigarrow \quad
\begin{tikzpicture}[baseline=25,scale =0.5]
    \draw (-1,0) rectangle node [] {$\beta'$} (-3,1.5);
    \draw  (-3,0.25) to [out=180,in=0] (-5.5,2) to [out=0,in=180] (-3, 3.75) -- (-1,3.75) to [out=0,in=180] (0, 3.25) to [out=-20,in=180] (2.3,1.7) to [out=180,in=0] (0,0.25) to [out=180,in=0] (-1, 0.75);
    \draw (-3,0.75) to [out=180,in=0] (-4.75,2) to [out=0,in=180] (-3, 3.25) -- (-1,3.25) to [out=0,in=180] (0, 3.75) to [out=0,in=180] (2.3,2.3) to [out=180,in=20] (0,0.75) to [out=180,in=0] (-1, 0.25);
    \draw (-3,1.25) to [out=180,in=0] (-4,2) to [out=0,in=180] (-3, 2.75) -- (0,2.75) to [out=0,in=180] (1,2) to [out=180,in=0] (0,1.25) -- (-1,1.25);
\end{tikzpicture}
\quad \rightsquigarrow \quad
\begin{tikzpicture}[baseline=25,scale =0.5]
    \draw (-1,0) rectangle node [] {$\beta'$} (-3,1.5);
    \draw  (-3,0.25) to [out=180,in=0] (-5.5,2) to [out=0,in=180] (-3, 3.75) -- (0,3.75) to [out=0,in=180] (2.5,2) to [out=180,in=0] (0,0.25) to [out=180,in=0] (-1, 0.75);
    \draw (-3,0.75) to [out=180,in=0] (-4.75,2) to [out=0,in=180] (-3, 3.25) -- (0,3.25) to [out=0,in=180] (1.75,2) to [out=180,in=0] (0,0.75) to [out=180,in=0] (-1, 0.25);
    \draw (-3,1.25)  to [out=180,in=0] (-4,2) to [out=0,in=180] (-3, 2.75) -- (0,2.75) to [out=0,in=180] (1,2) to [out=180,in=0] (0,1.25) -- (-1,1.25);
    \node at (-0.5, 0) [] {$s_i$};
\end{tikzpicture}
\]
 
The \emph{full cyclic rotation}  $\R$ of 
$\Lambda_\beta$ is the composition of cyclic rotations that rotate each letter in $\beta$ one-by-one from left to right. The resulted Legenedrian link is $\Lambda_beta$ again. Therefore $\R$ is a Legendrian loop, which induces an automorphism $\Phi_\R$ on $\Aug(\Lambda_\beta)$.

\begin{rmk} Below on the left is an example of the annular Legendrian weave (developed in \cite{CZ}) for the Lagrangian self concordance induced from the full cyclic rotation of a $3$-strand braid. The dashed teal region is the full twist $w_0^2$ coming from satelliting the positive braid along the unknot. This Legendrian $3$-graph consists mostly spirals, except they enter and leave the full twist region horizontally. The boxes on the right shows what happens inside that region.
\[
\begin{tikzpicture}[baseline=0,scale=0.8]
\draw (45:2) arc (45:-225:2);
\draw (45:0.5) arc (45:-225:0.5);
\draw [teal, dashed] (45:2) -- (45:0.5) arc (45:135:0.5);
\draw [teal, dashed] (135:0.5) -- (135:2) arc (135:45:2);
\draw [blue,domain=0:45,variable=\t,smooth,samples=100] plot ({180-\t}: {0.5+0.0055*\t});
\draw [red,domain=0:90,variable=\t,smooth,samples=100] plot ({225-\t}: {0.5+0.0055*\t});
\draw [blue,domain=0:135,variable=\t,smooth,samples=100] plot ({270-\t}: {0.5+0.0055*\t});
\draw [blue,domain=0:225,variable=\t,smooth,samples=100] plot ({360-\t}: {0.5+0.0055*\t});
\draw [blue,domain=0:225,variable=\t,smooth,samples=100] plot ({180+\t}: {2-0.0055*\t});
\draw [red,domain=0:180,variable=\t,smooth,samples=100] plot ({225+\t}: {2-0.0055*\t});
\draw [blue,domain=0:135,variable=\t,smooth,samples=100] plot ({270+\t}: {2-0.0055*\t});
\draw [blue,domain=0:45,variable=\t,smooth,samples=100] plot ({0+\t}: {2-0.0055*\t});
\node at (180:1.25) [] {$\cdots$};
\node at (0:1.75) [] {$\cdots$};
\end{tikzpicture}
\quad \quad \quad 
\begin{tikzpicture}[baseline=0,scale=0.8]
\draw [teal, dashed] (0,0.25) rectangle (3.5,1.75);
\draw [teal, dashed] (0,-0.25) rectangle (3.5,-1.75);
\draw [red] (-0.25,1) -- (0.75,1);
\draw [blue] (0.75,1) to [out=0,in=-90] (1.5,1.75);
\draw [blue] (0.5,1.75) to [out=-90,in=120] (0.75,1);
\draw [blue] (-0.25,-1) -- (0,-1) to [out=0,in=-90] (0.5,-0.25);
\draw [blue] (0.5,0.25) to [out=90,in=-120] (0.75,1);
\draw [red] (0.75,1) to [out=60,in=-90] (1,1.75);
\draw [red] (0.75,1) to [out=-60,in=90] (1,0.25);
\draw [red] (1,-0.25) to [out=-90,in=120] (1.25,-1) to [out=-120,in=90] (1,-1.75);
\draw [blue] (0.5,-1.75) to [out=90,in=180] (1.25,-1) to [out=60,in=-90] (1.5,-0.25);
\draw [blue] (1.5,0.25) to [out=90,in=180] (2.25,1) to [out=60,in=-90] (2.5,1.75);
\draw [blue] (1.25,-1) to [out=-60,in=90] (1.5,-1.75);
\draw [red] (1.25,-1) to [out=0,in=-90] (2,-0.25);
\draw [red] (2,0.25) to [out=90,in=-120] (2.25,1)to [out=120,in=-90] (2,1.75); 
\draw [red] (2,-1.75) to [out=90,in=180] (2.75,-1) to [out=60,in=-90] (3,-0.25);
\draw [red] (3,0.25) to [out=90,in=180] (3.5,1) -- (3.75,1);
\draw [blue] (2.5,-1.75) to [out=90,in=-120] (2.75,-1) to [out=120,in=-90] (2.5,-0.25);
\draw [blue] (2.5,0.25) to [out=90,in=-60] (2.25,1);
\draw [red] (2.25,1) to [out=0,in=-90] (3,1.75);
\draw [blue] (2.75,-1) -- (3.75,-1);
\draw [red] (2.75,-1) to [out=-60,in=90] (3,-1.75);
\end{tikzpicture}
\]
\end{rmk}

\subsection{Donaldson-Thomas Transformation}

The cluster DT transformation is a central element of the cluster modular group acting on the associated cluster varieties. Combinatorially, a cluster DT transformation can be manifested as a maximal green sequence, or more generally, a reddening sequence of quiver mutations \cite{KelDT}.

\begin{lem}\label{6.7} For any braid word $\beta$, we have  $\Phi_\R=\DT^{-2}$ on $\Aug\left(\Lambda_\beta\right)$.
\end{lem}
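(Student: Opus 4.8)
The plan is to identify the full cyclic rotation $\R$ with a geometric operation on the augmentation variety that is known — or can be shown — to realize the cluster DT transformation. The cleanest route is through the explicit description of $\Aug(\Lambda_\beta)$ as a (decorated) braid variety together with its cluster structure from \cite{GSW}: a full cyclic rotation of the braid word $\beta = s_{i_1}s_{i_2}\cdots s_{i_\ell}$ moves each letter from the far left to the far right, one at a time, each elementary move being a single cyclic rotation $\rho$. On the level of the braid variety, conjugating the rainbow closure by a full loop around the satelliting unknot amounts to acting by the full twist $w_0^2$ (this is exactly the content of the remark preceding the lemma). So the first step is to write $\Phi_\R$ as the composition $\Phi_{\rho_\ell}\circ\cdots\circ\Phi_{\rho_1}$ of the automorphisms induced by the $\ell$ elementary cyclic rotations, and to recognize this composition as the monodromy of the full-twist loop.

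Next I would invoke the combinatorial characterization of the cluster DT transformation: by \cite{KelDT} (and its cluster-variety incarnation used in \cite{GS2,SWflag}), $\DT$ is the cluster transformation associated to a reddening sequence of mutations, and it is uniquely pinned down by its tropical/cluster signature — e.g. its action on a cluster seed, or equivalently the fact that it sends the initial seed to the ``all-red'' seed. So it suffices to track what each elementary rotation $\rho$ does to the distinguished cluster seed attached to a filling of $\Lambda_\beta$, and to check that the full cyclic rotation performs, in total, two reddening sequences — hence $\DT^{2}$ — but in the direction opposite to the chosen orientation of the cluster mutations, yielding $\DT^{-2}$. Concretely: one full pass of all $\ell$ letters through the twist region should correspond to one reddening sequence (a half rotation is the ``square root of DT'' familiar in these settings, cf. the full twist being the square of the half twist), so the full cyclic rotation over the full-twist region $w_0^2$ contributes the square; the sign is a bookkeeping matter of comparing the direction in which crossings are pushed through with the direction of mutations defining $\DT$.

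The technical core — and the main obstacle — is the second step: establishing that each elementary cyclic rotation $\rho$ acts on the cluster seed by a prescribed sequence of mutations (with a precise permutation of the indexing set coming from the relabeling of bricks after the rotation), and that concatenating these over a full pass reproduces exactly a reddening sequence for $Q_\beta$. This requires a careful local analysis of how the quiver $Q_\beta$ and its cluster coordinates transform when a single crossing is dragged around the braid closure — matching the front-projection moves drawn above with the mutation combinatorics in \cite{GSW}. I would organize this as a lemma: ``dragging $s_i$ across the braid closure mutates at the vertices of the $i$-th column in top-to-bottom order and then applies the column-shift permutation,'' and then verify by direct inspection (or by appealing to the known identification of full twists with DT in the flag-variety/double Bott–Samelson picture of \cite{SWflag}) that $\ell$ such steps assemble into a reddening sequence, so that the full cyclic rotation gives the square. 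Once the seed-level identity $\Phi_\R = \DT^{-2}$ is checked on one seed, it holds on $\Aug(\Lambda_\beta)$ because both sides are cluster automorphisms and a cluster automorphism is determined by its effect on a single seed.
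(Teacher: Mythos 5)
Your route is genuinely different from the paper's, but it has a gap at exactly the point you flag as the ``technical core.'' Everything rests on the claim that a single elementary cyclic rotation $\rho$ acts on the distinguished seed by an explicit mutation sequence (``mutate the $i$-th column top-to-bottom, then relabel'') and that concatenating $\ell$ such steps produces a reddening sequence; this is asserted, not proved, and it is not obviously true --- a priori $\Phi_\rho$ is just a biregular automorphism, and whether it is a cluster transformation at all (let alone a specific mutation sequence) is precisely what has to be established. Your fallback, ``appeal to the identification of full twists with DT in \cite{SWflag},'' is in fact the paper's actual proof, but stated precisely it does not pass through reddening sequences or seed-tracking at all: by \cite[Corollary 5.4]{GSW} the isomorphism $\gamma\colon \Aug\left(\Lambda_\beta\right)\cong\conf^e_\beta(\cC)$ intertwines $\Phi_\rho$ with the composition of reflection maps $r^i\circ {_ir}$ on double Bott--Samelson cells, and \cite{SWflag} gives the closed formula $\DT^{-1}=\left(r^{i_l}\circ\cdots\circ r^{i_1}\right)\circ t$ with $t$ the transposition. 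Squaring, using that $t$ conjugates the string of right reflections into the corresponding string of left reflections and that $t^2=\id$, and commuting left past right reflections yields $\DT^{-2}=\left(r^{i_l}\circ {_{i_l}r}\right)\circ\cdots\circ\left(r^{i_1}\circ {_{i_1}r}\right)$, which is exactly $\Phi_\R$ under $\gamma$.

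This also exposes why your heuristic ``a half rotation is the square root of $\DT$'' is imprecise: one full pass of right reflections equals $\DT^{-1}\circ t$, not $\DT^{-1}$, so the transposition twist is essential and only cancels after squaring. Without it, the bookkeeping you defer (``the sign is a matter of comparing directions'') cannot be made to close, and the uniqueness-of-$\DT$-via-reddening-sequences framework from \cite{KelDT} is never actually brought to bear on $\Phi_\R$ because you have no proof that $\Phi_\R$ is given by any mutation sequence in the first place. If you want to salvage your approach, you would need to first prove the cluster nature of each $\Phi_\rho$ --- which in practice means re-deriving the reflection-map intertwining of \cite{GSW} --- at which point the paper's short algebraic computation is both easier and complete.
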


\begin{proof} Each positive braid $[\beta]$ defines a double Bott-Samelson cell $\conf^e_\beta(\cC)$ associated with the group $\G=\SL_n$ (see \cite{SWflag} for definition). Theorem 4.10 of \cite{GSW} constructs an algebraic variety isomorphism $\gamma$ between $\Aug\left(\Lambda_\beta\right)$ and $\conf^e_\beta(\cC)$. The cluster $\mathrm{K}_2$ structure on $\Aug\left(\Lambda_\beta\right)$ is inherited from $\conf^e_\beta(\cC)$ via the isomorphism $\gamma$. 

The {left reflection} $_ir$ from $\conf_{s_i\gamma}^\delta(\cC)$ to $\conf_\gamma^{s_i\delta}(\cC)$ and the {right reflection} $r^i$ from $\conf_\gamma^{\delta s_i}(\cC)$ to $\conf_{\gamma s_i}^\delta(\cC)$ are  biregular isomorphisms between double Bott-Samelson cells. By \cite[Corollary 5.4]{GSW}, the isomorphism $\gamma$ intertwines the cyclic rotation $\Phi_\rho$ on augmentation varieties and the isomorphism $r^i\circ {_ir}$ on double Bott-Samelson cells. That is, the following diagram commutes.
\begin{equation}\label{comm diag}
\vcenter{\vbox{\xymatrix@R=1.5em{\Aug\left(\Lambda_{s_i\beta'}\right) \ar@{->}[r]^{\gamma}_\cong \ar[d]_{\Phi_{\rho}}^\cong & \conf^e_{s_i\beta'}(\mathcal{C}) \ar[d]^{r^i\circ {_ir}}_{\cong}\\
\Aug\left(\Lambda_{\beta' s_i}\right) \ar@{->}[r]_{\gamma}^\cong & \conf^e_{\beta' s_i}(\mathcal{C})}}}
\end{equation}

Let $\beta=s_{i_1}\ldots s_{i_l}$. By \cite{SWflag}, the inverse of the DT transformation on  $\conf^e_\beta(\cC)$ is given by
\[
\DT^{-1}=\left(r^{i_l}\circ r^{i_{l-1}} \circ \dots \circ r^{i_1}\right)\circ t,
\]
where $t$ is a biregular isomorphism induced by the transposition action on $\G=\SL_n$. Then
\begin{align*}
\DT^{-2}
=&\left(r^{i_l}\circ r^{i_{l-1}} \circ \dots \circ r^{i_1}\right)\circ t\circ\left(r^{i_l}\circ r^{i_{l-1}} \circ \dots \circ r^{i_1}\right)\circ t\\
=&\left(r^{i_l}\circ r^{i_{l-1}}\circ \cdots \circ r^{i_1}\right)\circ \left({_{i_l}r}\circ {_{i_{l-1}}r} \circ \cdots \circ {_{i_1}r}\right)\circ t\circ t\\
=&\left(r^{i_l}\circ r^{i_{l-1}}\circ \cdots \circ r^{i_1}\right)\circ \left({_{i_l}r}\circ {_{i_{l-1}}r} \circ \cdots \circ {_{i_1}r}\right) \\
=&\left(r^{i_l}\circ {_{i_l}r}\right)\circ \left(r^{i_{l-1}}\circ {_{i_{l-1}}r}\right)\circ \cdots \circ \left(r^{i_1} \circ {_{i_1}r}\right).
\end{align*}
By translating the identity $\DT^{-2}=\left(r^{i_l}\circ {_{i_l}r}\right)\circ  \cdots \circ \left(r^{i_1} \circ {_{i_1}r}\right)$ to the augmentation variety side according to the commutative diagram \eqref{comm diag} we get that
$\Phi_\R=\DT^{-2}$ on  $\Aug\left(\Lambda_\beta\right)$.
\end{proof}

\subsection{Aperiodic DT Yields Infinitely Many Lagrangian Fillings}

\begin{thm}\label{5.11} For any braid word $\beta$, if the $\DT$ transformation on $\Aug\left(\Lambda_\beta\right)$ is aperiodic, then $\Lambda_\beta$ admits infinitely many admissible fillings.
\end{thm}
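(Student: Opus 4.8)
The plan is to produce the infinitely many fillings as the orbit of one fixed filling under the full cyclic rotation loop $\R$, and to separate them using the cluster seeds of \cite{GSW} together with the identification $\Phi_\R=\DT^{-2}$ of Lemma \ref{6.7}. First I fix a base admissible filling $L_0$ of $\Lambda_\beta$; one exists — for instance the decomposable filling read off from the brick diagram of $\beta$, as in \cite{GSW}. Since the full cyclic rotation $\R$ is by construction a finite composition of cyclic rotations and braid moves, it is an admissible Lagrangian concordance from $\Lambda_\beta$ to itself, and concatenating it on top of an admissible filling yields another admissible filling; hence $L_n:=\R^{\,n}\cdot L_0$ is an admissible filling of $\Lambda_\beta$ for every $n\geq 0$. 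By \cite{GSW}, each $L_n$ induces a cluster seed $\Sigma_n$ on $\Aug(\Lambda_\beta)$ — a Zariski-dense cluster chart with a labeled coordinate system and a quiver — and stacking $\R$ transforms this datum by the induced automorphism $\Phi_\R$, so $\Sigma_n=\Phi_\R^{\,n}(\Sigma_0)$, which by Lemma \ref{6.7} equals $\DT^{-2n}(\Sigma_0)$.

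The key step is to check that the $\Sigma_n$ are pairwise distinct. Suppose $\Sigma_n=\Sigma_m$ with $n>m$; then $\Phi_\R^{\,n-m}=\DT^{-2(n-m)}$ fixes the seed $\Sigma_0$, i.e.\ it is a biregular automorphism of $\Aug(\Lambda_\beta)$ preserving the (dense) cluster chart underlying $\Sigma_0$ and inducing the identity on its coordinate functions, so $\DT^{-2(n-m)}=\id$; since $2(n-m)>0$ this contradicts the aperiodicity of $\DT$. Therefore $\Sigma_0,\Sigma_1,\Sigma_2,\dots$ are pairwise distinct, and by \cite[Theorem~1.4]{GSW} the fillings $L_n$ lie in pairwise distinct Hamiltonian isotopy classes. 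Thus $\Lambda_\beta$ admits infinitely many admissible fillings.

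The step that requires real care is the compatibility invoked at the end of the first paragraph: one must make precise, within the $\mathrm{K}_2$-cluster formalism of \cite{GSW}, that stacking the loop $\R$ on a filling acts on the induced labeled seed exactly by the cluster transformation $\Phi_\R$, so that the cyclic group $\langle\Phi_\R\rangle$ acts on the set of labeled cluster charts of $\Aug(\Lambda_\beta)$, together with the elementary but essential fact that an automorphism of the cluster variety restricting to the identity on one labeled chart is the identity. Granting these, the remainder is bookkeeping — verifying that $\R$ decomposes into admissible moves and unwinding ``aperiodic'' into ``$\DT^2$ has infinite order'' — and the aperiodicity hypothesis is precisely what forces the orbit $\{\Sigma_n\}$ to be infinite.
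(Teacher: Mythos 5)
Your proposal is correct and follows essentially the same route as the paper: take the pinching filling $L_0$, form $L_m=\R^m\circ L_0$, use Lemma \ref{6.7} to identify the induced seed transformation with $\DT^{-2m}$, and distinguish the resulting seeds via the aperiodicity of $\DT$ and the seed-to-Hamiltonian-isotopy-class result of \cite{GSW}. Your extra observation that a cluster automorphism fixing a labeled seed must be the identity is a fair elaboration of the step the paper leaves implicit, not a different argument.
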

\begin{proof} Let $L_0$ be the admissible filling that pinches the crossings in $\beta$ from left to right and then fills the resulted unlinks with minimum cobordisms. Let $L_m = \R^{m}\circ L_0$. We claim that $L_m$ is not Hamiltonian isotopic to $L_k$ for $m \neq  k$. To see this, note that by Lemma \ref{6.7}, the cluster seeds of $L_m$ can be computed by mutating the initial seed according to $\DT^{-2m}$; the aperiodicity of $\DT$ implies that the cluster seeds of $L_m$ and $L_k$ are distinct for $m\neq k$. The statement follows from \cite[Theorem 1.3]{GSW}.
\end{proof}

\begin{rmk}

The torus $(n,m)$-link $\Lambda_{(n,m)}$ is the rainbow closure of the $n$-strand braid word $\beta = (s_1s_2 \dotsb s_{n-1})^m$. K\'alm\'an \cite{Kalman} defined a Legendrian loop $\K=\rho^{n-1}$ for $\Lambda_{(n,m)}$. By definition, the full cyclic rotation $\R = \K^{m}.$  The  induced action $\Phi_\K$ and $\Phi_\R$ on the augmentation variety are of finite order.

The quivers associated to $\Aug\left(\Lambda_{(n,m)}\right)$ and those associated to the Grassmannian $\Gr_{n,n+m}$ share the same unfrozen parts. Hence, their DT transformations have the same order. The $\DT$ on $\Gr_{n,n+m}$ has finite order because it is related to the periodic Zamolodchikov operator by $\DT^2=\mathrm{Za}^{m}$ \cite{Kelperiod,weng, SWflag}. In fact, K\'alm\'an's loop induces the Zamolodchikov operator. Summarizing,
$$\Phi_\R = \Phi_\K^m = \DT^{-2}  = \mathrm{Za}^{-m}.$$
\end{rmk}{}

\begin{thm}\label{6.13} Let $Q$ be an acyclic quiver. Its associated $\DT$ transformation is of finite order if and only if $Q$ is of finite type. 
\end{thm}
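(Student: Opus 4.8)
The plan is to deduce the statement from classical facts about hereditary algebras together with the categorification of the $\DT$ transformation. First I would reduce to the connected case: the cluster variety attached to a disjoint union $Q = Q' \sqcup Q''$ is a product on which $\DT$ acts diagonally as $\DT_{Q'} \times \DT_{Q''}$, and $Q$ is of finite type exactly when both $Q'$ and $Q''$ are, so it suffices to treat connected $Q$. For a connected acyclic quiver, being of finite type (mutation equivalent to a finite type quiver) is equivalent, by Gabriel's theorem and the classification of finite type cluster algebras, to the path algebra $kQ$ being representation-finite, i.e. to the underlying graph of $Q$ being an $\mathrm{ADE}$ Dynkin diagram; otherwise $kQ$ is representation-infinite. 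Thus there are exactly two cases.

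If $Q$ is Dynkin, the cluster algebra of $Q$ is of finite type, so its exchange graph is finite. The $\DT$ transformation lies in the cluster modular group, which permutes the finitely many seeds compatibly with the exchange structure; hence some power of $\DT$ fixes the initial seed up to relabeling, and since a cluster transformation fixing a seed is the identity, $\DT$ has finite order. (Alternatively one may invoke Zamolodchikov-type periodicity for Dynkin $Q$.)

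Now suppose $kQ$ is representation-infinite; I claim $\DT$ has infinite order. The crucial input is that, for acyclic $Q$, the $\DT$ transformation of the cluster variety of $Q$ is categorified -- up to a fixed nonzero power, which is irrelevant for finiteness of the order -- by the shift functor $[1]$ on the cluster category $\mathcal{C}_Q = D^b(\mathrm{mod}\,kQ)/\langle \tau^{-1}[1]\rangle$; this should follow from Keller's description of $\DT$ via reddening sequences and the associated Calabi--Yau category \cite{KelDT}, matched with the reflection-map construction of $\DT$ on $\conf^e_\beta(\cC)$ in \cite{SWflag}. Since $\mathcal{C}_Q$ is $2$-Calabi--Yau, its Serre functor is $[2]$, while the Auslander--Reiten formula identifies it with $\tau \circ [1]$; hence $[1] \cong \tau$ in $\mathcal{C}_Q$, so it is enough to see that $\tau$ acts on $\mathcal{C}_Q$ with infinite order. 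If $\tau^N \cong \id$ on $\mathcal{C}_Q$ for some $N \ge 1$, then every indecomposable of $\mathcal{C}_Q$ would be $\tau$-periodic; but since $kQ$ is representation-infinite there is a preprojective indecomposable $kQ$-module $M$ (e.g. an indecomposable projective), and the modules $M, \tau^{-1}M, \tau^{-2}M, \dots$ are pairwise non-isomorphic and $M$ is not $\tau$-periodic. Comparing cohomological degrees shows the orbit functor $D^b(\mathrm{mod}\,kQ) \to \mathcal{C}_Q$ identifies none of them and keeps $M$ non-$\tau$-periodic in $\mathcal{C}_Q$ -- a contradiction. Hence $[1]$, and therefore $\DT$, has infinite order, so $Q$ of infinite type forces $\DT$ aperiodic.

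The hard part will be the categorical bridge used at the start of the last paragraph: one needs that the $\DT$ transformation produced in \cite{SWflag} by composing reflection maps agrees, for acyclic $Q$, with the cluster-categorical $\DT$ given by the shift of $\mathcal{C}_Q$, and that the order of $\DT$ as an automorphism of the cluster variety equals the order of the induced autoequivalence of $\mathcal{C}_Q$. Granting this, the remainder is the standard preprojective-component argument, and the two cases above yield both implications of the claimed equivalence.
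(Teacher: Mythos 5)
Your strategy is sound, but it is a genuinely different route from the paper's. The paper also realizes $\DT$ combinatorially as the maximal green sequence $\mu_n\circ\cdots\circ\mu_1$ attached to an acyclic ordering of the vertices \cite{KelDT}, but instead of passing to the cluster category it evaluates the resulting orbit at the single point $(1,\dots,1)$ of the cluster variety $\mathscr{A}_Q$ and quotes the frieze variety theorem of \cite{lee2018frieze}: the Zariski closure of the $\DT$-orbit of that point has dimension $0$ exactly when $Q$ is representation finite, and positive dimension in the tame and wild cases, so infinite type forces an infinite orbit and hence aperiodicity; the finite type direction is handled, as in your first case, by finiteness of the cluster structure. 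Your categorical argument is precisely the alternative the paper attributes to Keller in the remark following the theorem (aperiodicity of the Auslander--Reiten translation), and your treatment of that part --- $[1]\cong\tau$ on the $2$-Calabi--Yau orbit category, non-$\tau$-periodicity of a preprojective indecomposable, the fundamental-domain argument --- is correct.

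However, as written your infinite-type direction is conditional on the ``categorical bridge'' you explicitly defer, and that bridge is the substantive content of this implication. Concretely you need (i) that the $\DT$ transformation used in the paper (defined via reflection maps in \cite{SWflag}, equivalently via a reddening sequence as in \cite{KelDT}) acts on seeds the way $[1]\cong\tau$ acts on cluster-tilting objects of $\mathcal{C}_Q$, and (ii) that finite order of $\DT$ as an automorphism of the variety forces finite order of the induced permutation of cluster-tilting objects; point (ii) requires the seed/cluster-tilting correspondence (cluster characters for acyclic quivers), and point (i) is the comparison statement one must actually prove or cite. Neither is out of reach in the literature, but your proposal only asserts them, so the hard direction is not yet established. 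The paper's frieze-variety argument sidesteps this entirely: it needs only the green-sequence description of $\DT$ plus a quotable dimension theorem, at the price of tracking one explicit orbit rather than yielding the categorical statement your approach would give.
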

\begin{proof} Combinatorially, the DT transformation arises from a maximal green sequence of quiver mutations \cite{KelDT}. When $Q$ is acyclic, one may label the vertices of $Q$ by $1,\ldots, l$ such that $i<j$ if there is an arrow from $i$ to $j$. The mutation sequence $\mu_n\circ\cdots \circ \mu_1$ is  maximal green and therefore gives rise to the DT transformation associated with $Q$.

The DT transformation acts the cluster variety $\mathscr{A}_Q$ associated with the quiver $Q$. Following \cite{lee2018frieze}, the {\it frieze variety} $X(Q)$ is defined to be the Zariski closure of the DT-orbit containing the point $P=(1, \ldots, 1)\in \mathscr{A}_Q$. 
Theorem 1.1 of {\it loc.cit.} states that
\begin{enumerate}
    \item If $Q$ is representation finite then the frieze variety $X(Q)$ is of dimension $0$.
    \item If $Q$ is tame then the frieze variety $X(Q)$ is of dimension $1$.
    \item If $Q$ is wild then the frieze variety $X(Q)$ is of dimension at least $2$.
\end{enumerate}{}
As a direct sequence, if $Q$ is not of finite type, then the DT-orbit of $P$ contains infinitely many points, and therefore DT is not periodic. 
If $Q$ is of finite type, then its corresponding cluster variety is of finite type. Hence, its DT transformation is periodic.
\end{proof}

\begin{rmk} Keller pointed out to us that the aperiodicity of $\DT$ for acyclic quiver $Q$ of infinite type follows from the aperiodicity of the Auslander-Reiten translation functor on the derived category of representations of $Q$.
\end{rmk}

\begin{cor}\label{rainbowinfinite}
For any braid word $\beta$, if $Q_\beta$ is acyclic and of infinite type, then $\Lambda_\beta$ admits infinitely many admissible fillings.
\end{cor}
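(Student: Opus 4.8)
The plan is to deduce the statement directly from Theorem~\ref{5.11} and Theorem~\ref{6.13}. By Theorem~\ref{5.11} it suffices to show that the $\DT$ transformation on $\Aug\left(\Lambda_\beta\right)$ is aperiodic whenever $Q_\beta$ is acyclic and of infinite type. Recall from \cite{GSW} (and from the proof of Lemma~\ref{6.7}) that the cluster $\mathrm{K}_2$ structure on $\Aug\left(\Lambda_\beta\right)$ has an exchange matrix whose principal (mutable) part is $Q_\beta$, the extra frozen vertices being the ones attached to the marked points near the right cusps, and that the $\DT$ transformation of this cluster structure is the canonical $\DT$ element of the cluster modular group of $Q_\beta$, now acting on the cluster variety with these coefficients.

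First I would invoke Theorem~\ref{6.13}: since $Q_\beta$ is acyclic and of infinite type, the $\DT$ transformation associated with $Q_\beta$ is of infinite order. Concretely, ordering the vertices $1,\dots,l$ compatibly with the arrows, the mutation sequence $\mu_l\circ\cdots\circ\mu_1$ is a maximal green sequence realizing $\DT$, and no iterate of the resulting cluster transformation is the identity; equivalently, no iterate of this mutation sequence returns the exchange matrix to $Q_\beta$.

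Next I would upgrade this to aperiodicity of $\DT$ on $\Aug\left(\Lambda_\beta\right)$, i.e. in the presence of the frozen coefficients. The point is that the seed pattern of $\Aug\left(\Lambda_\beta\right)$ refines the coefficient-free seed pattern of $Q_\beta$: along any mutation sequence the mutable part of the exchange matrix of $\Aug\left(\Lambda_\beta\right)$ evolves exactly as the exchange matrix of the coefficient-free cluster algebra of $Q_\beta$. Hence a periodic $\DT$ on $\Aug\left(\Lambda_\beta\right)$ would have an iterate fixing the whole seed, and \emph{a fortiori} fixing its mutable part, contradicting the infinite order established above. Therefore $\DT$ on $\Aug\left(\Lambda_\beta\right)$ is aperiodic, and Theorem~\ref{5.11} produces infinitely many admissible fillings of $\Lambda_\beta$.

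I expect the step deserving the most care to be the compatibility asserted in the first paragraph: that the $\DT$ transformation of the GSW cluster structure on $\Aug\left(\Lambda_\beta\right)$ really is the coefficient-laden version of the $\DT$ transformation attached to the acyclic quiver $Q_\beta$ in Theorem~\ref{6.13}. One clean way to pin this down is to track, through the isomorphism $\gamma$ of \cite[Theorem~4.10]{GSW} and the formula $\DT^{-1}=\left(r^{i_l}\circ\cdots\circ r^{i_1}\right)\circ t$ used in the proof of Lemma~\ref{6.7}, that the reflections $r^{i}$ realize precisely the mutations at the rightmost bricks, so that for acyclic $Q_\beta$ the resulting composite mutation sequence is (a reordering of) the maximal green sequence $\mu_l\circ\cdots\circ\mu_1$ of Theorem~\ref{6.13}; its coefficient-free shadow is then exactly the $\DT$ of $Q_\beta$, which is all the argument above requires.
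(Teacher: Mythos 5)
Your overall route is exactly the paper's: Corollary \ref{rainbowinfinite} is deduced by combining Theorem \ref{5.11} with Theorem \ref{6.13}, and the paper's proof is literally that one line, leaving the coefficient/frozen-variable compatibility implicit (it is the same fact the paper invokes in the remark comparing $\Aug\left(\Lambda_{(n,m)}\right)$ with the Grassmannian: the order of $\DT$ is governed by the unfrozen part).

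However, the specific bridge you build has a step that fails. You assert that the infinite order of $\DT$ for $Q_\beta$ is ``equivalently'' the statement that no iterate of the mutation sequence $\mu_l\circ\cdots\circ\mu_1$ returns the exchange matrix to $Q_\beta$, and you then derive your contradiction by noting that a periodic $\DT$ on $\Aug\left(\Lambda_\beta\right)$ would fix the mutable part of the exchange matrix. This is not a contradiction: for an acyclic quiver, mutating at the vertices in a source-compatible order is well known to return the $B$-matrix to $Q_\beta$ after a single pass (this is the quiver-level shadow of the Coxeter/Auslander--Reiten translation), so the mutable exchange matrix is periodic of period one and detects nothing. The aperiodicity in Theorem \ref{6.13} lives at the level of the induced automorphism of the cluster variety (in the paper it is detected by the infinitude of the frieze-variety orbit of the point $(1,\ldots,1)$), not at the level of $B$-matrices. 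To transfer it to $\Aug\left(\Lambda_\beta\right)$ you should argue with cluster variables rather than exchange matrices: for instance, if some iterate $\DT^{N}$ fixed the seed of $\Aug\left(\Lambda_\beta\right)$, then specializing all frozen variables to $1$ (which carries the geometric-coefficient cluster variables to the coefficient-free ones for the same mutation sequence) would make the coefficient-free $\DT^{N}$ the identity on $\mathscr{A}_{Q_\beta}$, contradicting Theorem \ref{6.13}; alternatively, invoke the fact, used implicitly by the paper, that the order of a cluster transformation such as $\DT$ depends only on the unfrozen part of the quiver. With that repair your argument is complete and coincides with the paper's.
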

\begin{proof} It follows from Theorem \ref{5.11} and Theorem \ref{6.13}.
\end{proof}

\section{Infinitely Many Fillings for Infinite Type} \label{sec 3}

This section is devoted to the proof of the following result.

\begin{thm}\label{MainStep1}
If $[\beta]$ is a positive braid of infinite type, then the positive braid Legendrian link $\Lambda_\beta$ admits infinitely many non-Hamiltonian isotopic exact Lagrangian fillings.  
\end{thm}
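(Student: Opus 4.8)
The plan is to bootstrap from the acyclic case, Corollary~\ref{rainbowinfinite}, to the general infinite type case. Two facts make this possible. First, $\Lambda_\beta$ depends, up to Legendrian isotopy, only on the positive braid $[\beta]$: the braid relations $s_is_js_i=s_js_is_j$ are Reidemeister III moves and $s_is_j=s_js_i$ for $|i-j|\ge 2$ is a planar isotopy. Second, by the cyclic rotation of Section~\ref{sec 2}, $\Lambda_{s_i\beta'}$ and $\Lambda_{\beta's_i}$ are Legendrian isotopic. Hence, for the purpose of counting admissible fillings of $\Lambda_\beta$, we are free to replace $\beta$ by any braid word obtained from it by braid relations and cyclic rotations; under all such moves the quiver $Q_\beta$ changes only within its mutation class, so whether $Q_\beta$ is of finite or of infinite type is unaffected.

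I would first dispose of the decomposable case. If $Q_\beta$ is disconnected, then $\Lambda_\beta$ is a split union and/or connected sum of positive braid Legendrians $\Lambda_{\beta_1},\Lambda_{\beta_2}$ on fewer strands, with $\Aug(\Lambda_\beta)$ the product of $\Aug(\Lambda_{\beta_1})$ and $\Aug(\Lambda_{\beta_2})$ as cluster $\mathrm{K}_2$ varieties. Admissible fillings of the factors then glue to admissible fillings of $\Lambda_\beta$, and since the seed of such a glued filling contains the seeds of its constituents as sub-seeds via the product structure, distinct seeds for one factor force distinct seeds for $\Lambda_\beta$. Since $[\beta]$ is of infinite type, at least one factor $[\beta_i]$ is again of infinite type (a disjoint union of quivers is of finite type iff every summand is, as mutations do not mix connected components), so an induction on the number of crossings reduces us to the case that $Q_\beta$ is connected; split unknot summands are harmless, contributing only a trivial factor.

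So assume $[\beta]$ is connected and of infinite type. Here the plan is to produce, using only braid relations and cyclic rotations, a braid word $\gamma$ with $\Lambda_\gamma$ Legendrian isotopic to $\Lambda_\beta$ and with $Q_\gamma$ \emph{acyclic}. Since $Q_\gamma$ is mutation equivalent to $Q_\beta$, it is automatically of infinite type, and Corollary~\ref{rainbowinfinite} then yields infinitely many admissible fillings of $\Lambda_\gamma\cong\Lambda_\beta$, which are pairwise non-Hamiltonian isotopic by \cite[Theorem~1.3]{GSW}. To construct $\gamma$ I would normalize $\beta$: iteratively apply Reidemeister III moves to regroup consecutive syllables and use cyclic rotation to move letters around the closure, steering the wall of bricks toward a ``staircase'' shape in which the compact bricks are linearly ordered and every arrow of the quiver respects that order.

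The hard part will be exactly this last reduction — showing that every connected positive braid of infinite type admits an acyclic brick quiver after braid relations and cyclic rotations. It is delicate because acyclicity is not a mutation invariant, so one cannot simply appeal to mutation equivalence, and because certain braid words admit no Reidemeister III move at all; this is precisely where cyclic rotations (the Legendrian loop $\R$), and not braid moves alone, must enter, and where one must verify that the words resisting normalization are exactly the finite type ones. An alternative, should the direct normalization prove awkward, is to exhibit instead a subword $\beta_0$ of $\beta$ whose brick quiver is an acyclically oriented affine Dynkin diagram — such a $\beta_0$ exists once $[\beta]$ is of infinite type — and to transport the infinitely many admissible fillings of $\Lambda_{\beta_0}$ from Corollary~\ref{rainbowinfinite} back to $\Lambda_\beta$ using the compatibility of the cluster $\mathrm{K}_2$ structures under adjoining crossings established in \cite{GSW}.
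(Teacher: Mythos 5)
Your overall skeleton (reduce to the acyclic case and invoke Corollary \ref{rainbowinfinite}, after splitting off disconnected quiver components) matches the paper's, and your treatment of the disconnected case is workable in spirit (the paper instead pinches crossings to pass to $\beta(1,i)$ and $\beta(i+1,n-1)$ and applies Proposition \ref{3.2}). But the heart of the theorem is exactly the step you leave open, and your primary route for it is not available as stated. You propose to find, using only braid relations, commutations and cyclic rotations, a word $\gamma$ with $\Lambda_\gamma$ Legendrian isotopic to $\Lambda_\beta$ and $Q_\gamma$ acyclic. The paper never establishes such a normalization, and its own case analysis indicates it should not be expected: for instance for $\beta=s_1^{3}s_2s_1^{3}s_2s_1^{3}s_2$ the paper does not isotope to an acyclic word, it \emph{deletes crossings} ($\beta\succ s_1^{3}s_2s_1^{3}s_2s_1^{2}s_2$, then an R3 and further deletions give $s_1^2s_2^2s_1^2s_2^2$), i.e.\ it performs saddle cobordisms to a strictly smaller Legendrian. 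This is precisely why the paper introduces the dominance order and Proposition \ref{3.2}: infinitude of fillings is transported along admissible \emph{cobordisms} (which send distinct cluster charts to distinct cluster charts), not only along Legendrian isotopies. Since acyclicity is not a mutation invariant, "mutation equivalent to $Q_\beta$, hence infinite type" does not help you produce an acyclic representative, and your normalization claim would need a proof that does not exist in the paper and is quite likely false.

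Your fallback — exhibit a subword $\beta_0$ of $\beta$ whose quiver is an acyclic affine Dynkin quiver and transport its fillings — is essentially Proposition \ref{Mainquiver}(2) combined with Proposition \ref{3.2}, but it is asserted, not proved, and it is also slightly misstated: in general one cannot take a literal subword of $\beta$; one must interleave Legendrian isotopies (R3, commutation, cyclic rotation, positive Markov destabilization) with deletions, repeatedly, before an acyclic infinite-type word such as $s_1^2s_2^2s_1^2s_2^2$, $s_1s_3s_2^2s_1s_3s_2^2$ or $s_1^6s_2s_1^3s_2$ appears (see Lemma \ref{ws4lemma} and Cases 1--4 of Section \ref{sec 3}). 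That existence statement is the real content of the theorem: it occupies the normalization of Assumption \ref{mainassumption}, Lemmas \ref{basicbraidsinf}, \ref{ws4lemma}, \ref{quiverdeg}, \ref{3.10}, \ref{3.13}, \ref{A sub-tree}, and the induction on the number of strands, none of which your proposal supplies or sketches. Finally, the consistency check that the words for which this procedure fails are exactly the finite-type ones (Proposition \ref{3.4}, using \cite[Proposition 5.25(2)]{GSW} to relate the deleted word's quiver to a full subquiver of a quiver mutation equivalent to $Q_\beta$) is also needed and missing. So the proposal identifies the correct framework but omits, and in its main branch mis-identifies, the essential combinatorial core of the proof.
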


\begin{defn} Given two braid words $\beta$ and $\gamma$, we say $\beta$ \emph{dominates} $\gamma$ if there exists an admissible cobordism from $\Lambda_\gamma$ to $\Lambda_\beta$. Dominance is a partial order on braid words.
\end{defn}

Recall that a quiver is \emph{connected} if its underlying graph is connected. Connectedness of quivers is invariant under  mutations. Under the connectedness assumption, Theorem \ref{MainStep1} is a consequence of Corollary \ref{rainbowinfinite} and the following Propositions.

\begin{prop}\label{3.2} Suppose $\beta$ dominates $\gamma$. If $\Lambda_\gamma$ admits infinitely many  admissible fillings, then so does $\Lambda_\beta$. 
\end{prop}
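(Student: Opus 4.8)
The plan is to leverage the fact that an admissible cobordism from $\Lambda_\gamma$ to $\Lambda_\beta$ lets us transport admissible fillings of $\Lambda_\gamma$ to admissible fillings of $\Lambda_\beta$ by composition, and to use the cluster-theoretic machinery of \cite{GSW} to show that distinct fillings stay distinct after this composition. Suppose $L$ is the given admissible cobordism from $\Lambda_\gamma$ to $\Lambda_\beta$, and let $\{F_m\}_{m\in\mathbb{N}}$ be infinitely many pairwise non-Hamiltonian-isotopic admissible fillings of $\Lambda_\gamma$. Since an admissible filling of $\Lambda_\gamma$ is itself an admissible cobordism (from the empty Legendrian to $\Lambda_\gamma$), the compositions $L\circ F_m$ are admissible cobordisms from the empty set to $\Lambda_\beta$, i.e.\ admissible fillings of $\Lambda_\beta$. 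So we obtain a family $\{L\circ F_m\}$ of admissible fillings of $\Lambda_\beta$; it remains only to show infinitely many of them are pairwise non-Hamiltonian-isotopic.

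The key step is that by \cite[Theorem 1.3]{GSW} two admissible fillings are not Hamiltonian isotopic as soon as they induce distinct cluster seeds on the augmentation variety, so it suffices to show that the seeds induced by $L\circ F_m$ on $\Aug(\Lambda_\beta)$ are distinct for distinct $m$. Here I would use the functoriality of the cluster structure under admissible cobordisms established in \cite{GSW}: the cobordism $L$ induces a quasi-cluster (cluster quasi-homomorphism / amalgamation-type) map $\Psi_L$ relating the cluster structures on $\Aug(\Lambda_\gamma)$ and $\Aug(\Lambda_\beta)$, and the seed induced by $L\circ F_m$ is obtained by applying $\Psi_L$ to the seed induced by $F_m$. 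Concretely, the quiver $Q_\gamma$ embeds as a sub-quiver of $Q_\beta$ (the extra crossings of $\beta$ contribute extra vertices, some of which become frozen), and passing from the $F_m$-seed to the $(L\circ F_m)$-seed amounts to this embedding together with the fixed mutation pattern dictated by $L$. Since the $F_m$ induce distinct seeds on $\Aug(\Lambda_\gamma)$ and the map on seeds induced by a fixed cobordism is injective on seeds (distinct seeds on a cluster variety map to distinct seeds under a quasi-cluster transformation / the underlying $y$-patterns or extended exchange matrices are recovered by restriction to the unfrozen sub-quiver), the induced seeds on $\Aug(\Lambda_\beta)$ are again pairwise distinct, hence the fillings $L\circ F_m$ are pairwise non-Hamiltonian-isotopic.

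I expect the main obstacle to be making precise and citing correctly the statement that composing with a fixed admissible cobordism $L$ sends distinct cluster seeds to distinct cluster seeds — i.e.\ the injectivity of the induced map at the level of seeds. One has to be careful: an admissible cobordism need not induce an embedding of cluster varieties in the naive sense, only a quasi-cluster transformation, and seeds could in principle collide under it. The safe route is to observe that $L$ is a composition of elementary cobordisms (saddles, cyclic rotations, braid moves, minima), and for each elementary type the effect on quivers/seeds is explicitly understood in \cite{GSW}: saddles and minima add a frozen vertex (and possibly freeze an existing one) without altering the unfrozen exchange data relevant to the seed, cyclic rotations and R3 moves are quiver isomorphisms (or mutations), so the unfrozen part of the seed of $F_m$ can be read off from the unfrozen part of the seed of $L\circ F_m$. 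Therefore distinct unfrozen seeds for the $F_m$ force distinct seeds for the $L\circ F_m$, completing the argument. I would phrase the final write-up so that all of this reduces to quoting the functoriality statement and seed-tracking lemmas of \cite{GSW} rather than re-deriving them here.
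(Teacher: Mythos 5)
Your proposal is correct and follows essentially the same route as the paper: compose the given admissible cobordism with the fillings of $\Lambda_\gamma$ and invoke the functoriality results of \cite{GSW}, which state both that distinct induced cluster seeds imply non-Hamiltonian-isotopic fillings and that the morphism of augmentation varieties induced by an admissible cobordism sends distinct cluster charts to distinct cluster charts. The injectivity-on-seeds step you flagged as the main obstacle is exactly what the paper quotes from \cite[Theorem 1.4]{GSW}, so no further argument is needed.
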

\begin{proof} Recall from \cite[Theorem 1.4]{GSW} that the induced cluster charts on the augmentation variety can be used to distinguish admissible fillings, and the functorial morphism between augmentation varieties induced by any admissible cobordism maps distinct cluster charts to distinct cluster charts.
\end{proof}

\begin{prop}\label{Mainquiver}
For any braid word $\beta$ with connected $Q_\beta$, either one of the following two scenarios happens:
\begin{enumerate}[label*=\emph{(\arabic*)},leftmargin = *]
    \item there is an admissible concordance from $\Lambda_\gamma$ to $\Lambda_\beta$ and $Q_\gamma$ is a quiver of finite type.
    \item $\beta$ dominates a braid word $\gamma$ and $Q_\gamma$ is acyclic and of infinite type. 
\end{enumerate} 
\end{prop}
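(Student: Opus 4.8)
The plan is to analyze the combinatorial structure of the brick quiver $Q_\beta$ and show that, after performing admissible moves that simplify $\beta$, we can always reach a braid word whose quiver is either finite type or acyclic of infinite type. The key tool is the dictionary between admissible cobordisms and operations on braid words: a crossing that can be ``pinched'' (a saddle cobordism) deletes a letter and the corresponding quiver vertex (with the standard mutation-compatible vertex deletion), while braid moves (R3) realize quiver mutations, and cyclic rotations permute the word. So the strategy is: starting from $\beta$, repeatedly apply the following reduction — if $Q_\beta$ is already acyclic we are in case (2) (if it is also infinite type) or case (1) (if finite type); otherwise $Q_\beta$ contains an oriented cycle, and I will argue that one can use R3 moves and cyclic rotations to rewrite $\beta$ so that some letter becomes ``removable,'' then pinch it. Each pinch strictly decreases the length of the braid word, so the process terminates. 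The subtle point is to ensure that when we terminate we land in one of the two stated scenarios and not in some intermediate configuration.

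First I would set up the brick-diagram combinatorics carefully: for a braid word $\beta = s_{i_1}\cdots s_{i_l}$, the vertices of $Q_\beta$ are the compact bricks, and a letter $s_{i_k}$ is a \emph{leftmost/rightmost occurrence on its level} precisely when the corresponding brick is non-compact on one side; such a letter can be removed by a saddle cobordism followed by absorbing a resulting unknot component (a minimum cobordism) or by a destabilization. Concretely, I expect the relevant lemma to be: if $\beta$ has a repeated letter $s_i s_i$ appearing consecutively (possibly after an R3 move brings two $s_i$'s together), then pinching the crossing between them removes a source or sink vertex of $Q_\beta$ and the new quiver $Q_{\beta'}$ is obtained from $Q_\beta$ by deleting that vertex. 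The case analysis then splits on whether $Q_\beta$ is acyclic:

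\textbf{Case A: $Q_\beta$ is acyclic.} If $Q_\beta$ is of infinite type we are immediately in scenario (2) with $\gamma = \beta$ (the identity cobordism is admissible, hence $\beta$ dominates itself). If $Q_\beta$ is of finite type, note that a Dynkin quiver is itself a brick quiver of a standard ADE braid word, and I would produce an admissible concordance from the corresponding standard $\Lambda_\gamma$ to $\Lambda_\beta$ — this uses that mutation-equivalent quivers arising from braid words are related by sequences of R3 moves and that the extra letters beyond the ``core'' can be introduced by (reverse) saddle moves, giving scenario (1). \textbf{Case B: $Q_\beta$ has an oriented cycle.} Here I use connectedness of $Q_\beta$ and the structure of brick quivers: an oriented cycle forces a local configuration in the brick wall (a letter $s_i$ flanked on the same level and the adjacent levels by other letters creating the Z/S patterns). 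I would show that by cyclic rotations one can move this configuration so that one of the participating letters becomes a boundary (leftmost or rightmost on its level), then R3-move and pinch it. This produces $\beta'$ with $|\beta'| < |\beta|$ dominated by... wait — dominance runs the other way, so I need $\beta$ to dominate $\beta'$: indeed the pinch gives an admissible cobordism from $\Lambda_{\beta'}$ to $\Lambda_\beta$, so $\beta$ dominates $\beta'$, and $Q_{\beta'}$ is still connected or splits into connected components each of which we handle separately (a split union only helps, since infinite type of $Q_\beta$ passes to at least one component). Iterating, by induction on length we reach Case A, and transitivity of dominance finishes the argument.

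The main obstacle I anticipate is \textbf{Case B}: showing that an oriented cycle in a connected brick quiver can always be ``broken'' by admissible moves that strictly shorten the word. The danger is a braid word where every letter sits in the interior of its level, so no letter is immediately removable, and R3 moves only shuffle cycles around without exposing a boundary letter. I would handle this by a global argument rather than a purely local one: among all braid words $\gamma$ with $\beta$ dominating $\gamma$ (equivalently, obtained from $\beta$ by R3's, rotations, and pinches) of minimal length, if $Q_\gamma$ still had an oriented cycle then connectedness plus minimality of length would have to be contradicted — essentially, a minimal-length positive braid representative whose quiver is cyclic and connected does not exist unless the quiver is mutation-finite, and the mutation-finite connected quivers that occur are exactly the Dynkin ones (using the classification of mutation-finite quivers, or directly the fact from Section \ref{sec 4} that finite-type braids give standard ADE links). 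I would lean on the length-reduction being well-founded to make this rigorous, possibly invoking results from the companion Section \ref{sec 4} (Theorem \ref{4.6}) for the finite-type bookkeeping. A secondary, more technical obstacle is verifying that the pinch operation interacts with the quiver exactly as ``delete a source/sink vertex,'' including the frozen-vertex bookkeeping near the right cusps; this is routine given the brick-diagram description but must be stated precisely.
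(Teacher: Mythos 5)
Your reduction scheme has a genuine gap at exactly the point you flag as the main obstacle, and the patch you sketch does not close it. First, the dichotomy is not preserved by your iteration: pinches are cobordisms, not concordances, so if your length-reduction terminates at an acyclic quiver of \emph{finite} type you have only exhibited a braid word of finite type dominated by $\beta$ -- that establishes neither scenario (1), which requires an admissible \emph{concordance} $\Lambda_\gamma\to\Lambda_\beta$, nor scenario (2), which requires the dominated quiver to be of infinite type. (Indeed $\beta$ always dominates plenty of finite-type words: just pinch almost everything.) So ``break a cycle, pinch, induct on length'' cannot by itself prove the proposition; one must either reduce by Legendrian isotopies alone (R1, R3, rotation, commutation) when aiming at (1), or pinch only when one can guarantee landing on, or dominating, an acyclic quiver that is provably of infinite type. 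Second, your fallback global argument for Case B is not a proof: the claim that a minimal-length connected representative with an oriented cycle forces the quiver to be ``mutation-finite, and the mutation-finite connected quivers that occur are exactly the Dynkin ones'' conflates finite mutation class with finite cluster type (affine quivers, among many others, are mutation-finite but of infinite type), and no mechanism is given for why minimality of length would contradict the existence of a cycle. A lesser but real issue is the asserted dictionary ``pinch $=$ delete a source/sink vertex'' and ``R3 $=$ mutation'': pinching deletes a letter, which merges bricks and changes the quiver in a way that is not simply vertex deletion at a source or sink, and this bookkeeping matters precisely when you are trying to certify infinite type after a pinch.

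For comparison, the paper does not break cycles one at a time. It first normalizes $\beta$ (Assumption \ref{mainassumption}) into the form $w_1s_2^{b_1}\cdots w_ms_2^{b_m}$ with $m\le 3$, the case $m\ge 4$ being handled by Lemma \ref{ws4lemma}; it then inducts on the number of strands rather than on word length, using Lemma \ref{3.13} (an R3--R1 concordance that removes a strand when $Q_{\beta(1,2)}$ is type $\mathrm{A}$) and Lemma \ref{A sub-tree}, and in the cyclic cases it produces, by explicit subword deletions and braid identities, dominated words whose quivers are specific acyclic infinite-type quivers ($\tilde{\mathrm{D}}_4$, $\tilde{\mathrm{D}}_5$, $\tilde{\mathrm{D}}_8$, $\mathrm{E}_9$, and the two families pictured in Lemma \ref{A sub-tree}). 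The finite-type branch is reached only through concordances, which is what makes scenario (1) legitimate. If you want to salvage your outline, you would need (a) a criterion guaranteeing that each pinch you perform keeps you within reach of an acyclic infinite-type quiver, and (b) a concordance-only reduction for the complementary case; at that point you are essentially reconstructing the paper's case analysis.
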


\begin{prop}\label{3.4} If Proposition \ref{Mainquiver} (1) happens, then $[\beta]$ is of finite type.

If Proposition \ref{Mainquiver} (2) happens, then $[\beta]$ is of infinite type.
\end{prop}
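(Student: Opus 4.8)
The plan is to prove the two implications by reducing each scenario to results about quiver mutation equivalence and then invoking the dichotomy for acyclic quivers. The key observation is that both admissible concordances and admissible cobordisms induce, at the level of quivers, a passage between mutation-equivalent quivers (possibly after freezing/deleting some vertices): a concordance induces an isomorphism of augmentation varieties and hence a mutation equivalence of the full quivers, while a general admissible cobordism induces, via \cite[Theorem 1.4]{GSW}, a quasi-cluster embedding, so that the unfrozen part of $Q_\gamma$ appears as a full subquiver (up to mutation) of $Q_\beta$.

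First, suppose Proposition \ref{Mainquiver}(1) happens, so there is an admissible concordance from $\Lambda_\gamma$ to $\Lambda_\beta$ with $Q_\gamma$ of finite type. A concordance is in particular an admissible cobordism with no change in Euler characteristic, and the induced morphism on augmentation varieties is an isomorphism that intertwines the cluster $\mathrm{K}_2$ structures; hence $Q_\beta$ and $Q_\gamma$ are mutation equivalent. Since being of finite type is by definition a mutation-invariant property of the quiver, $[\beta]$ is of finite type. (One should check that the concordance matches up frozen data correctly, so that it is the full quivers, not just the unfrozen parts, that are identified; this is exactly the content of the concordance case of \cite[Theorem 1.4]{GSW}.)

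Second, suppose Proposition \ref{Mainquiver}(2) happens, so $\beta$ dominates a braid word $\gamma$ with $Q_\gamma$ acyclic and of infinite type. By Gabriel's theorem an acyclic quiver of infinite type is not mutation equivalent to any disjoint union of Dynkin quivers; in particular any quiver containing $Q_\gamma$ (up to mutation) as a full subquiver on a subset of its vertices is also of infinite type, since restricting to a full subquiver of a finite-type quiver yields a finite-type quiver, and finite type is preserved under deleting vertices and under mutation. The dominance hypothesis gives an admissible cobordism $\Lambda_\gamma \rightsquigarrow \Lambda_\beta$, which by \cite[Theorem 1.4]{GSW} realizes the seed of $\Lambda_\gamma$ inside that of $\Lambda_\beta$; tracking the unfrozen vertices, the unfrozen part of $Q_\gamma$ is a full subquiver of a quiver mutation equivalent to $Q_\beta$. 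Since $Q_\gamma$ is already acyclic, one can arrange that freezing does not destroy infinite type (the unfrozen mutable part still contains an infinite-type acyclic piece), so $Q_\beta$ is mutation equivalent to a quiver of infinite type, i.e. $[\beta]$ is of infinite type.

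The main obstacle I expect is bookkeeping the frozen versus unfrozen vertices carefully under the cobordism-induced maps: an admissible cobordism changes the number of Reeb chords and hence the size of the quiver, and only the unfrozen part is guaranteed to transfer faithfully, so one must argue that the infinite-type-ness survives the passage from $Q_\gamma$ to its unfrozen restriction and its embedding into $Q_\beta$. The clean way around this is to note that $\gamma$ in scenario (2) is chosen precisely so that $Q_\gamma$ is acyclic: an acyclic infinite-type quiver remains infinite type after deleting any set of vertices unless it becomes a disjoint union of Dynkin quivers, and the construction in the proof of Proposition \ref{Mainquiver} should be set up so that the relevant unfrozen subquiver still contains a connected acyclic non-Dynkin piece, which then forces $[\beta]$ to be of infinite type by Gabriel's theorem.
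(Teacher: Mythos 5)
Your proof is correct and follows essentially the same route as the paper: scenario (1) via concordance $\Rightarrow$ mutation equivalence of $Q_\beta$ and $Q_\gamma$, and scenario (2) via the cobordism realizing $Q_\gamma$ as a full subquiver of a quiver mutation equivalent to $Q_\beta$, then contradicting finite type (the paper cites \cite[Proposition 5.25(2)]{GSW} for exactly this embedding, which also makes your frozen/unfrozen bookkeeping worries unnecessary, since the statement is already about the unfrozen brick-diagram quivers).
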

\begin{proof} Admissible concordances give rise to sequences of  mutations (\cite[\S 5]{GSW}). 
If Proposition \ref{Mainquiver} (1) happens, then $Q_\beta$ is mutation equivalent to $Q_\gamma$. The latter is of finite type. Therefore $[\beta]$ is of finite type. 

If Proposition \ref{Mainquiver} (2) happens, then by \cite[Proposition 5.25(2)]{GSW}, $Q_\beta$ is mutation equivalent to a quiver which contains $Q_\gamma$ as a full subquiver. Suppose that $[\beta]$ is of finite type. Then $Q_\gamma$ is mutation equivalent to finite type quiver, which contradicts with the assumption that $Q_\gamma$ is acyclic and of infinite type. Therefore $[\beta]$ is of infinite type. 
\end{proof}

Proposition \ref{3.4} implies the exclusiveness of the two scenarios of Proposition \ref{Mainquiver}. To conclude the proof of the latter, it remains to prove that the two scenarios in Proposition \ref{Mainquiver} cover all braid words with connected quivers. The strategy of our proof is as follows.
\begin{itemize}
    \item Suppose there is an admissible concordance $\Lambda_\gamma\rightarrow \Lambda_\beta$  such that $Q_\gamma$ is acyclic. If $Q_\gamma$ is of finite type,  then $\beta$ satisfies (1); otherwise, $\beta$ satisfies (2). 
    \item Otherwise, we prove that $\beta$ satisfies (2).
\end{itemize}

\subsection{Preparation}
We  adopt the following notations for  operations on braid words.
\begin{enumerate}
    \item[1.] $\overset{\text{R1}}{=}$ denotes the positive Markov destabilization, which deletes the $s_1$ (resp. $s_{n-1}$) if it only occurs once in $\beta$.
   
    \item[2.] $\overset{\text{R3}}{=}$ denotes the braid move R3, which switches $s_is_{i+1}s_i$ and $s_{i+1}s_is_{i+1}$.
    
    \item[3.] $\overset{\rho}{=}$ denotes the cyclic rotation, which turns $\beta s_i$ into $s_i\beta$ or vice versa.
    
    \item[4.] $\overset{c}{=}$ denotes the commutation which turns $s_is_j$ into $s_js_i$ whenever $|i-j|>1$.

    \item[5.] $\succ$ denotes deleting letters; $\beta\succ \gamma$ means that $\gamma$ can be obtained by deleting letters in $\beta$. In particular, when $\beta\succeq \gamma$, we say that $\gamma$ is a \emph{subword} of $\beta$.
    
    \item[6.] $\overset{\textrm{oppo}}{\rightsquigarrow}$ denotes taking the opposite word ${\beta}^{\mathrm{op}}$. The quiver $Q_{\beta^{\mathrm{op}}}$ alters the orientation of every arrow in $Q_\beta$. 
\end{enumerate}
Operations 1 - 4 induce Legendrian isotopies between corresponding positive braid Legendrian links, which are building blocks for admissible concordance. Operations  5 induces pinch cobordisms between Legendrian links. Operation 6 is a symmetry that can be used to reduce the number of cases considered in the proof.

\begin{lem}\label{basicbraidsinf}
The quivers for the following braids are acyclic and of infinite type:
\begin{enumerate}[label*=\emph{(\arabic*)}]
    \item $s_1^2s_2^2s_1^2s_2^2$, or more generally, $s_i^2s_{i+1}^2s_i^2s_{i+1}^2$;
    \item $s_1s_3s_2^2s_1s_3s_2^2$.
\end{enumerate}
\end{lem}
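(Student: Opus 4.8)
The plan is to compute the quiver $Q_\beta$ explicitly from the brick-diagram recipe of the introduction in each of the two cases, read off its underlying graph, and observe that in both cases this graph is a \emph{tree} that happens to be an \emph{affine} Dynkin diagram. An orientation of a tree never carries an oriented cycle, so $Q_\beta$ is automatically acyclic; and since the underlying graph is not a Dynkin diagram of type $\mathrm{ADE}$, Gabriel's theorem \cite{Gab} shows that $Q_\beta$ has infinitely many isomorphism classes of indecomposable representations, i.e.\ is of infinite type. Thus the whole statement reduces to two finite bookkeeping computations on the ``wall of bricks''. The one place where care is needed is the arrow rule for two bricks on adjacent levels (the ``Z or S pattern''); I would first pin this rule down by checking the recipe against the examples already worked out in the introduction --- $s_1^{r+1}$ giving $\mathrm{A}_r$, the standard $\mathrm{D}_r$ and $\mathrm{E}_6$ braids, and $s_1^6 s_2 s_1^3 s_2$ giving $\mathrm{E}_9$ --- from which it is clear that, recording each brick by the interval of horizontal positions between its two bounding crossings, a level-$i$ brick and a level-$(i+1)$ brick are joined by an edge exactly when these two intervals interleave (properly overlap), and are not joined when the intervals are disjoint or one of them contains the other.

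For (1), I would take $\beta = s_1^2 s_2^2 s_1^2 s_2^2$ with letters in positions $1,\dots,8$: the occurrences of $s_1$ sit in positions $1,2,5,6$ and those of $s_2$ in positions $3,4,7,8$, so the compact bricks are the three level-$1$ bricks with position-intervals $[1,2], [2,5], [5,6]$ and the three level-$2$ bricks with position-intervals $[3,4], [4,7], [7,8]$, six vertices in all. The horizontal arrows give an $\mathrm{A}_3$-chain along each level, $[1,2] \to [2,5] \to [5,6]$ and $[3,4] \to [4,7] \to [7,8]$, and among the nine pairs of bricks lying on the two adjacent levels only $[2,5]$ and $[4,7]$ interleave, so there is exactly one further edge, joining those two. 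Hence the underlying graph of $Q_\beta$ is the tree consisting of a central edge between two degree-three vertices, each of which carries two pendant vertices --- that is, the affine Dynkin diagram $\widetilde{\mathrm{D}}_5$ --- and the conclusion follows as explained above. The general braid $s_i^2 s_{i+1}^2 s_i^2 s_{i+1}^2$ produces the identical quiver after shifting every level index by $i-1$.

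For (2), I would take $\beta = s_1 s_3 s_2^2 s_1 s_3 s_2^2$ with letters in positions $1,\dots,8$: $s_1$ in positions $1,5$, $s_2$ in positions $3,4,7,8$, $s_3$ in positions $2,6$. The compact bricks are $[1,5]$ on level $1$, the three bricks $[3,4], [4,7], [7,8]$ on level $2$, and $[2,6]$ on level $3$ --- five vertices. Levels $1$ and $3$ are not adjacent, so they contribute no edge between them; level $2$ carries the horizontal chain $[3,4] \to [4,7] \to [7,8]$; and interleaving of intervals occurs only for the pairs $[1,5],[4,7]$ and $[2,6],[4,7]$. Every edge is therefore incident to the vertex $[4,7]$, so the underlying graph of $Q_\beta$ is the four-pointed star $K_{1,4} = \widetilde{\mathrm{D}}_4$, again an affine Dynkin tree; hence $Q_\beta$ is acyclic and of infinite type. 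As indicated, the only genuine obstacle in the whole argument is getting the adjacent-level (``Z/S'') arrow rule exactly right, which is why I would nail it down on the introduction's examples before applying it here.
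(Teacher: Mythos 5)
Your proposal is correct and follows essentially the same route as the paper, whose entire proof consists of exhibiting the quivers as $\tilde{\mathrm{D}}_5$ and $\tilde{\mathrm{D}}_4$ (your brick-interval computations and the interleaving reading of the Z/S rule reproduce exactly the paper's pictures, and acyclicity plus Gabriel's theorem give the conclusion). You simply spell out the bookkeeping that the paper leaves implicit.
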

\begin{proof}
The quivers for (1) and (2) are $\tilde{\mathrm{D}}_5$ and $\tilde{\mathrm{D}}_4$ respectively. 
\[
\begin{tikzpicture}
	\draw [thick,->](0.15,0) -- (0.85, 0);
	\filldraw (0,0) circle (2pt);
	\filldraw (1,0) circle (2pt);
	\filldraw (-1,0) circle (2pt);
	\filldraw (0,-1) circle (2pt);
	\filldraw (1,-1) circle (2pt);
	\filldraw (-1,-1) circle (2pt);
    \draw [thick,->] (0.15,-1) -- (0.85, -1);
	\draw [thick,<-](0,-0.15) -- (0, -0.85);
	\draw [thick,<-](-0.15,0) -- (-0.85, 0);
	\draw [thick,->](-0.85,-1) -- (-0.15,-1);
\end{tikzpicture}
\qquad\qquad\qquad
\begin{tikzpicture}[baseline = -15]
	\draw [thick,->](0.15,0) -- (0.85, 0);
	\filldraw (0,0) circle (2pt);
	\filldraw (1,0) circle (2pt);
	\filldraw (-1,0) circle (2pt);
	\filldraw (0,1) circle (2pt);
	\filldraw (0,-1) circle (2pt);
	\draw [thick,<-](0,0.15) -- (0, 0.85);
	\draw [thick,<-](0,-0.15) -- (0, -0.85);
	\draw [thick,<-](-0.15,0) -- (-0.85, 0);
\end{tikzpicture} \qedhere
\]
\end{proof}

\begin{lem}  \label{ws4lemma}
Suppose $w_1,w_2,w_3\in \left\{ s_1s_3, s_1^2, s_3^2\right\}$. Then $w_1s_2w_2s_2w_3s_2w_4s_2$ dominates a braid with an acyclic quiver of infinite type.
\end{lem}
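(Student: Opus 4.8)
The plan is to run a finite case analysis on the quadruple $(w_1,w_2,w_3,w_4)$, first cutting it down with the available symmetries and then, in each surviving case, producing the required dominated braid by a short sequence of the moves $\overset{c}{=}$, $\overset{\rho}{=}$, $\overset{\text{R3}}{=}$ together with letter deletions $\succ$. To set up, note that since $s_1$ and $s_3$ commute each block is determined by its multiset of letters, so $w_i\in\{s_1^2,s_3^2,s_1s_3\}$, and that reading $\beta=w_1s_2w_2s_2w_3s_2w_4s_2$ cyclically (via $\overset{\rho}{=}$) presents it as an alternating cyclic word with four ``$s_2$-walls'' and four blocks; since neither $s_1$ nor $s_3$ can cross an $s_2$, the only operations available inside a block are commutations and deletions, and the only way to create a run $s_2^k$ with $k\geq 2$ is to delete an entire block between two consecutive walls. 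Three symmetries are available and preserve ``acyclic of infinite type'': the relabelling $s_1\leftrightarrow s_3$ of $\Br_4^+$, the reversal $\overset{\mathrm{oppo}}{\rightsquigarrow}$ (which reverses every arrow of the quiver), and cyclic rotation of the four blocks (realized by $\overset{\rho}{=}$). Together these generate a group of order $16$ acting on the $3^4=81$ quadruples, and one checks that only a short list of orbit representatives remains.

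The bulk of the cases are handled by one reduction. Suppose some \emph{alternating} pair of blocks coincide, say $w_1=w_3$ (the case $w_2=w_4$ being symmetric under rotation). Deleting $w_2$ and $w_4$ entirely merges the four $s_2$'s pairwise, and after $\overset{\rho}{=}$ one is left with $w_1s_2^2w_3s_2^2$ with $w_1=w_3$. If $w_1=s_1^2$ this is $s_1^2s_2^2s_1^2s_2^2$, covered by Lemma~\ref{basicbraidsinf}(1); if $w_1=s_3^2$ it is $s_3^2s_2^2s_3^2s_2^2\overset{\rho}{=}s_2^2s_3^2s_2^2s_3^2=s_i^2s_{i+1}^2s_i^2s_{i+1}^2$ with $i=2$, again Lemma~\ref{basicbraidsinf}(1); and if $w_1=s_1s_3$ it is $s_1s_3s_2^2s_1s_3s_2^2$, covered by Lemma~\ref{basicbraidsinf}(2). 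So the lemma holds whenever some alternating pair coincides, which accounts for all but finitely many orbits.

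It remains to treat the quadruples with $w_1\neq w_3$ and $w_2\neq w_4$; up to the order-$16$ symmetry these reduce to a short list of representatives, such as $(s_1^2,s_1^2,s_3^2,s_3^2)$, $(s_1^2,s_1^2,s_3^2,s_1s_3)$, $(s_1^2,s_1^2,s_1s_3,s_1s_3)$ and $(s_1^2,s_3^2,s_1s_3,s_1s_3)$. Here the idea is first to spend an R3 move to redistribute letters: whenever two \emph{adjacent} blocks both equal $s_i^2$ (with $i\in\{1,3\}$) one has $s_i^2\,s_2\,s_i^2\overset{\text{R3}}{=}s_is_2s_is_2s_i$, which breaks the two doubled blocks into three singleton blocks and inserts a new $s_2$-wall. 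Applying this (together with commutations, cyclic rotations, and further deletions) one reduces each remaining representative to a short braid word $\gamma$ that $\beta$ dominates, and then computes the brick quiver $Q_\gamma$ directly from the wiring/brick diagram, checking that it is an orientation of an affine Dynkin diagram (a $\widetilde{\mathrm D}$ or $\widetilde{\mathrm E}$), hence acyclic and of infinite type.

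The hard part is precisely this last step. Naive choices of deletions in the residual cases tend to produce finite-type quivers ($\mathrm A_n$, $\mathrm D_n$) or infinite-type quivers containing an oriented cycle, so one must choose the R3 redistributions and deletions carefully in order to land on a genuinely acyclic quiver; in some cases it may be necessary to perform one further mutation, realized again by an R3 move on $\gamma$, to remove a residual oriented triangle before reading off an affine diagram. The $s_1\leftrightarrow s_3$ and reversal symmetries are what keep the number of these delicate brick-quiver verifications small.
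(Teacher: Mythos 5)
Your overall architecture matches the paper's: delete $w_2$ and $w_4$ to get $w_1s_2^2w_3s_2^2$ and invoke Lemma~\ref{basicbraidsinf} whenever an alternating pair coincides, then reduce the remaining quadruples (those with $w_1\neq w_3$ and $w_2\neq w_4$) to a short list of representatives using the $s_1\leftrightarrow s_3$ symmetry, cyclic rotation, and the opposite word. Your four representatives are, up to rotation, exactly the paper's cases indexed by $k=\#\{i:w_i=s_1s_3\}\in\{2,1,0\}$. But the proof as written has a genuine gap: for these residual cases you give only a strategy and explicitly defer ``the hard part,'' namely the concrete braid manipulations, conceding that naive choices fail and that the right moves must be found. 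Those computations \emph{are} the content of the lemma. Concretely: $(s_1s_3,s_1s_3,s_1^2,s_3^2)$ and $(s_1s_3,s_1^2,s_1^2,s_3^2)$ are reduced by explicit sequences of R3, rotation, commutation, and positive Markov destabilization to $s_1^6s_2s_1^3s_2$, whose quiver is $\mathrm{E}_9=\tilde{\mathrm{E}}_8$; $(s_1s_3,s_1s_3,s_1^2,s_1^2)$ is isotoped to a word that visibly dominates $s_1^2s_2^2s_1^2s_2^2$; and $(s_1^2,s_1^2,s_3^2,s_3^2)$ needs no reduction at all, since its brick quiver is already $\tilde{\mathrm{D}}_8$ (a case your scheme would send through an unnecessary and unverified R3 redistribution).

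Two further points. First, your proposed engine for the hard cases, $s_i^2s_2s_i^2\overset{\text{R3}}{=}s_is_2s_is_2s_i$, while a correct identity, is not what drives any of the actual reductions; the key steps are R3 moves of the form $s_3s_2s_3=s_2s_3s_2$ (or $s_1s_2s_1=s_2s_1s_2$) arranged so that a generator occurs exactly once and can be removed by destabilization, collapsing the word toward a $2$- or $3$-strand form. Second, your structural claim that ``the only way to create a run $s_2^k$ with $k\geq 2$ is to delete an entire block'' is false for the same reason: R3 followed by destabilization creates such runs, and this is precisely how the $\mathrm{E}_9$ endpoints arise. Until the four residual verifications are actually carried out, the lemma is not proved.
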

\begin{proof} Note that $\beta = w_1 s_2 {\color{red}{w_2}} s_2 w_3 s_2 {\color{red}{w_4}} s_2 \succ w_1s_2^2 w_3s_2^2.$
If $w_1=w_3$, then the Lemma follows from Lemma \eqref{basicbraidsinf}. The same argument applies to  $w_2=w_4$. 
In the rest of the proof, we assume that $w_1 \neq w_3$ and  $w_2 \neq w_4$.

Let $k$ be the size of the set $\{i \mid w_i = s_1s_3\}$. Here $k\leq 2$; otherwise,  $w_1 = w_3$ or  $w_2 = w_4$. Using the symmetry between $s_1$ and $s_3$, we further assume that there are more $s_1^2$ than $s^2_3$ in $\{w_1, w_2, w_3, w_4\}$. 
We shall exhaust all the possibilities of $k$.

\vskip 2mm

\paragraph{{\it Case 1: k=2}} After taking necessary cyclic rotations and/or the opposite word, we have $w_1 = w_2 = s_1s_3$, and the values of $w_3, w_4$ split into two subcases.

If $w_3 =s_1^2$ and $w_4 = s_3^2$, then $\Lambda_\beta$ is admissible concordance to the standard $\mathrm{E}_9$ link:
                \begin{align*}
                    \beta
                    &= s_1s_3s_2s_3s_1s_2s_1s_1 {\color{teal}{s_2s_3s_3s_2}}
                    \stackrel{\rho}{=}
                    s_2s_3s_3s_2s_1 {\color{blue}{s_3s_2s_3}} s_1s_2s_1s_1 \\
                    &\stackrel{\textrm{R3}}{=}
                    s_2s_3s_3 {\color{blue}{s_2s_1s_2}} s_3s_2 s_1s_2s_1s_1
                    \stackrel{\textrm{R3}}{=}
                    s_2{\color{blue}{s_3s_3s_1}}s_2 {\color{blue}{s_1s_3}}s_2 s_1s_2s_1s_1 \\
                    &
                    \stackrel{c}{=}
                    s_2s_1{\color{blue}{s_3s_3s_2s_3}}s_1s_2 s_1s_2s_1s_1 
                    \stackrel{\textrm{R3}}{=}
                    s_2s_1s_2{\color{teal}{s_3}}s_2s_2s_1s_2 s_1s_2s_1s_1    \\
                    &\stackrel{\textrm{R1}}{=} {\color{blue}{s_2s_1s_2}}s_2s_2s_1{\color{blue}{s_2 s_1s_2}}s_1s_1 
                    \stackrel{\textrm{R3}}{=}
                    s_1{\color{blue}{s_2s_1s_2s_2}}s_1s_1 s_2s_1s_1s_1    \\
                    &\stackrel{\textrm{R3}}{=}
                    s_1s_1s_1s_2s_1s_1s_1 s_2s_1s_1s_1 
                    =
                    s_1^3 s_2s_1^3 s_2 {\color{teal}{s_1^3}} 
                    \stackrel{\rho}{=} s_1^6 s_2s_1^3 s_2
                \end{align*}
 
 If ${w_3 =w_4 =s_1^2}$, then 
            \begin{align*}
                \beta 
                &= s_1 {\color{blue}{s_3 s_2 s_3}} s_1 s_2 s_1^2 s_2 s_1^2 s_2 
                \stackrel{\textrm{R3}}{=}
                s_1s_2 {\color{teal}{s_3}} s_2 s_1s_2 s_1^2 s_2 s_1^2 s_2 
                \stackrel{\textrm{R1}}{=}
                s_1s_2^2 s_1s_2 s_1^2 s_2 s_1^2 s_2 \\
                &= s_1s_2^2 s_1s_2 s_1 s_1 s_2 s_1 {\color{teal}{s_1 s_2}} 
                \stackrel{\rho}{=} 
                s_1 s_2 s_1s_2^2 s_1 s_2 s_1 {\color{blue}{s_1 s_2 s_1}}  
                \stackrel{\textrm{R3}}{=} 
                s_1 {\color{red}{s_2}} s_1s_2^2 s_1 {\color{red}{s_2}} s_1 s_2 {\color{red}{s_1}} s_2
                {\succ}
                s_1^2s_2^2s_1^2s_2^2.
            \end{align*}
            
    \vskip 2mm
\paragraph{{\it Case 2: k=1}} We assume that $w_1 =s_1s_3$ after a necessary cyclic rotation. Then $w_2,w_3,w_4$ are either $s_1^2$ or $s_3^2$. Note that $w_2\neq w_4$.  By the symmetry between $s_1$ and $s_3$, and taking rotations and the opposite word if necessary, it suffices to consider $w_2=w_3=s_1^2$ and $w_4=s_3^2$. The $\Lambda_\beta$ is admissible concordance to the standard $\mathrm{E}_9$ link:
        \begin{align*}
            \beta 
            &=
            s_3s_1s_2s_1^2s_2s_1^2{\color{teal}{s_2s_3^2s_2}}
            \stackrel{\rho}{=}
            s_2{\color{blue}{s_3^2s_2s_3}}s_1s_2s_1^2s_2s_1^2
            \stackrel{\textrm{R3}}{=}
            s_2s_2^2{\color{teal}{s_3}}s_2s_1s_2s_1^2s_2s_1^2 \\
            &\stackrel{\textrm{R1}}{=}
            s_2s_2^2s_2s_1s_2s_1^2s_2s_1^2
            =
            {\color{blue}{s_2^4s_1s_2}}s_1^2s_2s_1^2
            \stackrel{\textrm{R3}}{=}
            s_1^4s_2s_1s_1^2s_2{\color{teal}{s_1^2}}
            \stackrel{\rho}{=}
            s_1^6s_2s_3s_2.
        \end{align*}
  \vskip 2mm      
 \paragraph{{\it Case 3: k=0}} Assume that $w_1=w_2 =s_1^2$ and $w_3=w_4 = s_3^2$. Then $Q_\beta$ is of type $\tilde{\mathrm{D}}_8$:
    \[
    \begin{tikzpicture}
	\draw [thick,->](0.15,0) -- (0.85, 0);
	\filldraw (0,0) circle (2pt);
	\filldraw (1,0) circle (2pt);
	\filldraw (-1,0) circle (2pt);
	\filldraw (0,-1) circle (2pt);
	\filldraw (1,-1) circle (2pt);
	\filldraw (2,-1) circle (2pt);
		\filldraw (3,-2) circle (2pt);
	\filldraw (2,-2) circle (2pt);
	\filldraw (1,-2) circle (2pt);
    \draw [thick,->] (0.15,-1) -- (0.85, -1);
	\draw [thick,<-](0,-0.15) -- (0, -0.85);
	\draw [thick,<-](-0.15,0) -- (-0.85, 0);
	\draw [thick,->](1.15,-1) -- (1.85,-1);
	\draw [thick,->](1.15,-2) -- (1.85,-2);
	\draw [thick,->](2.15,-2) -- (2.85,-2);
	\draw [thick,<-](2,-1.15) -- (2, -1.85);
	\end{tikzpicture} \qedhere
    \]
\end{proof}

\begin{defn}\label{defn:subword}
Let $\beta$ be a braid word of $n$ strands. For $1\leq i < j\leq n-1$, we define
$$\beta(i,j) := \textrm{the sub-word of }{\beta} \textrm{ that contains } s_i, s_{i+1},\dotsb, s_j.$$
For example, if $\beta = s_1 s_2s_3 s_1^2 s_2 s_5s_2s_3s_4$, then $\beta(2,3) = s_2s_3s_2^2s_3$.
\end{defn}

\begin{lem}\label{quiverdeg} Let $\beta$ be a braid word of $n$ strands.
\begin{enumerate}[label=\emph{(\arabic*)}]
    \item If $s_i^2$ is not a subword of $\beta$, then $Q_\beta=Q_{\beta(1,i-1)}\sqcup Q_{\beta(i+1,n-1)}$.

    \item If $\beta(i,i+1)$ does not contain a sub-word of intertwining pairs, namely neither $s_{i}s_{i+1}s_{i}s_{i+1}$ nor $s_{i+1}s_{i}s_{i+1}s_{i}$, then $Q_\beta=Q_{\beta(1,i)}\sqcup Q_{\beta(i+1,n-1)}$.
\end{enumerate}
\end{lem}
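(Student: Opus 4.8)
The plan is to read both statements directly off the brick-diagram construction of $Q_\beta$ recalled in the introduction. Write $m_j$ for the number of occurrences of $s_j$ in $\beta$. The vertices of $Q_\beta$ are partitioned into levels $1,\dots,n-1$, those at level $j$ being the compact bricks at level $j$, i.e. the $\max(0,m_j-1)$ consecutive pairs of occurrences of $s_j$; and every arrow of $Q_\beta$ is either a horizontal arrow joining two bricks at the same level, or a leftward arrow (from a $\mathrm{Z}$- or $\mathrm{S}$-pattern) joining a brick at some level $j$ to a brick at level $j+1$. In particular $Q_\beta$ has no arrow between levels differing by $\ge 2$. I would then record the elementary fact that the full subquiver of $Q_\beta$ on an initial block of levels $\{1,\dots,k\}$ is $Q_{\beta(1,k)}$, and on a final block $\{k,\dots,n-1\}$ is $Q_{\beta(k,n-1)}$: passing from $\beta$ to $\beta(1,k)$ just deletes the letters $s_{k+1},\dots,s_{n-1}$, which does not disturb the relative order of the occurrences of $s_1,\dots,s_k$, hence leaves the bricks at levels $\le k$, their horizontal intervals, and all adjacency and $\mathrm{Z}/\mathrm{S}$ relations among them unchanged; symmetrically for final blocks.

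For part (1): if $s_i^2$ is not a subword of $\beta$ then $m_i\le 1$, so level $i$ carries no vertex. Any arrow of $Q_\beta$ incident to level $i$ would be a horizontal arrow inside level $i$ or a leftward arrow between levels $i-1,i$ or $i,i+1$, each of which requires a vertex at level $i$; so none exists. Hence no arrow joins a vertex at a level $<i$ to one at a level $>i$, and $Q_\beta$ is the disjoint union of its full subquivers on $\{1,\dots,i-1\}$ and on $\{i+1,\dots,n-1\}$, which are $Q_{\beta(1,i-1)}$ and $Q_{\beta(i+1,n-1)}$ by the previous paragraph.

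For part (2): by the identical disjoint-union argument, now splitting the levels into $\{1,\dots,i\}$ and $\{i+1,\dots,n-1\}$, it suffices to show that $Q_\beta$ has no leftward arrow between levels $i$ and $i+1$. Such an arrow joins a brick running between consecutive occurrences of $s_i$ at positions $a<b$ to a brick running between consecutive occurrences of $s_{i+1}$ at positions $c<d$, and the $\mathrm{Z}/\mathrm{S}$ (``staircase'', as opposed to ``nested'' or ``disjoint'') condition on the two bricks is exactly that $a<c<b<d$ or $c<a<d<b$. In the first case the letters of $\beta$ at the positions $a<c<b<d$ read $s_i,s_{i+1},s_i,s_{i+1}$, so $\beta(i,i+1)$ has $s_is_{i+1}s_is_{i+1}$ as a subword; in the second case, at the positions $c<a<d<b$, they read $s_{i+1},s_i,s_{i+1},s_i$, so $\beta(i,i+1)$ has $s_{i+1}s_is_{i+1}s_i$ as a subword. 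Under the hypothesis both are excluded, so no such arrow exists and $Q_\beta=Q_{\beta(1,i)}\sqcup Q_{\beta(i+1,n-1)}$.

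The one point that genuinely needs care, and which I would pin down before writing the above, is the precise combinatorial meaning of ``$\mathrm{Z}$ or $\mathrm{S}$ pattern'': two overlapping bricks at adjacent levels are declared adjacent (and receive a leftward arrow) exactly when their horizontal intervals interleave, not when one is nested inside the other. Once that is fixed, the rest is routine bookkeeping with the brick diagram. As a sanity check, the bricks of $s_1s_2^2s_1$ are nested, giving quiver $\mathrm{A}_1\sqcup\mathrm{A}_1$ --- consistent with the cyclically equivalent word $s_1^2s_2^2$ --- whereas the bricks of $s_1s_2s_1s_2$ interleave, giving $\mathrm{A}_2$.
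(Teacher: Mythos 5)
Your proof is correct and is essentially the paper's own argument: the paper disposes of both parts in one line by observing that the brick diagram has an empty level $i$ in case (1) and no arrows between levels $i$ and $i+1$ in case (2), and your write-up just spells out those two observations (including the correct identification of Z/S patterns with interleaving brick intervals, which matches the paper's examples). No gaps; your version is simply a detailed expansion of the same brick-diagram reasoning.
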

\begin{proof} The brick diagram has an empty level $i$ in case (1) and does not have arrows between level $i$ and level $i+1$ in case (2).
\end{proof}

\begin{lem}\label{3.10}  Let $n\geq 3$ and let $\beta$ be an $n$-strand braid word such that $Q_\beta$ is connected.  If $\beta\succ s_1^2$ and  $\beta \succ s_{n-1}^2$, then $Q_\beta$ is acyclic if and only if for $1\leq i\leq n-2$, we have \[\beta(i,i+1)=s_i^{a_1}s_{i+1}^{b_1}s_i^{a_2}s_{i+1}^{b_2} ~\mbox{or}~ s_{i+1}^{b_1}s_i^{a_1}s_{i+1}^{b_2}s_i^{a_2}, \quad \quad \mbox{where } a_1, a_2, b_1, b_2\geq 1\]
\end{lem}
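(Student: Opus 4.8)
The proof divides into a reduction, the ``if'' direction, and the ``only if'' direction, and I expect the brick‑diagram bookkeeping in the latter two to be the only real work. Throughout, the displayed condition should be read up to cyclic rotation of $\beta$ — equivalently, $\beta(i,i+1)$ has exactly four syllables when viewed as a \emph{cyclic} word — since both $Q_\beta$ and the condition are unchanged, up to isomorphism, under cyclically rotating $\beta$. \emph{Reduction.} First I would promote the hypotheses using connectedness and Lemma~\ref{quiverdeg}. If $s_i^2$ were not a subword of $\beta$ for some $1<i<n-1$, then Lemma~\ref{quiverdeg}(1) would give $Q_\beta=Q_{\beta(1,i-1)}\sqcup Q_{\beta(i+1,n-1)}$ with both parts nonempty (because $\beta\succ s_1^2$ and $\beta\succ s_{n-1}^2$), contradicting connectedness; hence $\beta\succ s_i^2$ for \emph{all} $1\le i\le n-1$. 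Similarly, if $\beta(i,i+1)$ contained no intertwining pair for some $1\le i\le n-2$, Lemma~\ref{quiverdeg}(2) would disconnect $Q_\beta$; hence each $\beta(i,i+1)$ contains an intertwining pair, i.e. has at least four syllables. So the property to be shown equivalent to acyclicity is: every $\beta(i,i+1)$ has exactly four (cyclic) syllables.

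\emph{The ``if'' direction.} Assume every $\beta(i,i+1)$ has four cyclic syllables. First, the full subquiver $Q_\beta^{\{i,i+1\}}$ of $Q_\beta$ supported on levels $i$ and $i+1$ depends only on $\beta(i,i+1)$: exactly as in the proof of Lemma~\ref{quiverdeg}, the crossings $s_j$ with $j\notin\{i,i+1\}$ create no bricks on levels $i,i+1$ and no arrows between them, so $Q_\beta^{\{i,i+1\}}=Q_{\beta(i,i+1)}$. Second, a direct inspection of the brick wall of a four‑syllable word $s_i^{a_1}s_{i+1}^{b_1}s_i^{a_2}s_{i+1}^{b_2}$ shows that $Q_{\beta(i,i+1)}$ is a ``double star'': the level‑$i$ bricks form one directed path, the level‑$(i+1)$ bricks another, and the two are joined by a single cross‑arrow between the bricks straddling the opposite blocks — in particular a tree (this reproduces $\mathrm{D}_4$, $\tilde{\mathrm{D}}_5$, and the $\mathrm{E}_{r+3}$ family appearing earlier). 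Third, arrows of $Q_\beta$ only join equal or adjacent levels, so $Q_\beta=\bigcup_{i=1}^{n-2}Q_\beta^{\{i,i+1\}}$, and consecutive pieces $Q_\beta^{\{i-1,i\}}$ and $Q_\beta^{\{i,i+1\}}$ overlap precisely in the level‑$i$ directed path (the same path in both, since it is determined by the $s_i$‑crossings). A union of two trees meeting in a subtree is again a tree (additivity of the Euler characteristic together with connectedness), so by induction on $i$ the quiver $Q_\beta$ is a tree, hence acyclic.

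\emph{The ``only if'' direction.} I would argue the contrapositive. If some $\beta(i,i+1)$ is not four‑syllable, then by the Reduction it has at least six cyclic syllables, and since $Q_\beta^{\{i,i+1\}}=Q_{\beta(i,i+1)}$ sits inside $Q_\beta$ it suffices to find a directed cycle in the two‑level brick wall of $\beta(i,i+1)$. After a cyclic rotation (and possibly swapping the roles of $s_i,s_{i+1}$) write $\beta(i,i+1)=s_i^{a_1}s_{i+1}^{b_1}s_i^{a_2}s_{i+1}^{b_2}s_i^{a_3}s_{i+1}^{b_3}\cdots$ with at least six syllables shown. Let $q$ be the level‑$(i+1)$ brick straddling the block $s_i^{a_2}$ and let $p,p'$ be the level‑$i$ bricks straddling $s_{i+1}^{b_1}$ and $s_{i+1}^{b_2}$ (all three exist since $a_1,a_2,a_3,b_1,b_2\ge1$). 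Then $q$ is cross‑adjacent to $p$ on its left and to $p'$ on its right; these two $Z/S$ cross‑arrows are mirror images, so the ``leftward arrow'' rule orients them as $q\to p$ and $p'\to q$, and together with the rightward level‑$i$ path $p\to\cdots\to p'$ this closes into the oriented cycle $q\to p\to\cdots\to p'\to q$. Hence $Q_\beta$ is not acyclic.

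\emph{Main obstacle.} Everything rests on two brick‑diagram facts: (a) a four‑syllable two‑level word has exactly the ``double‑star'' quiver — in particular its cross‑arrows are as sparse as claimed; and (b) once there are six cyclic syllables, a $Z$ and an $S$ cross‑arrow close up with the horizontal arrows into an oriented cycle. Making the adjacency rule and the $Z/S$ orientation convention precise and checking (a) and (b) is the heart of the matter; the reduction via Lemma~\ref{quiverdeg} and the ``union of trees'' assembly are routine.
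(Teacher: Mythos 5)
Your reduction via Lemma~\ref{quiverdeg} and your cycle construction are essentially the paper's argument (the paper's entire ``only if'' step is the observation that a fifth block of $s_i$'s after $s_{i+1}^{b_2}$ closes up into the directed cycle $q\to p\to\cdots\to p'\to q$), and your reduction usefully makes explicit where connectedness and $\beta\succ s_1^2$, $\beta\succ s_{n-1}^2$ are used. But your opening normalization is a genuine error that breaks the proof: $Q_\beta$ is \emph{not} invariant, even up to isomorphism, under cyclic rotation of the braid word, and neither is acyclicity; rotation is a Legendrian isotopy and changes the quiver by mutation (the paper itself flags this danger in the proof of Lemma~\ref{A sub-tree}, where rotations are performed only ``carefully, so that the quiver \ldots is not distorted''). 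Concretely, take $\beta=s_1^2s_2s_1^2s_2s_1$ (so $n=3$ and $\beta(1,2)=\beta$): the hypotheses of the lemma hold, and the brick quiver contains a directed $4$-cycle, namely the two cross arrows (from the first $s_2$-brick... more precisely, from the level-$2$ brick to the level-$1$ brick straddling the first $s_2$, and from the level-$1$ brick straddling the second $s_2$ back to the level-$2$ brick) together with the rightward level-$1$ path between those two straddling bricks; yet its cyclic rotation $s_1^3s_2s_1^2s_2$ has the acyclic $\mathrm{D}_5$ quiver. This word has exactly four \emph{cyclic} syllables, so under your cyclic reading it satisfies the condition while $Q_\beta$ is not acyclic: the cyclically-read version of the lemma is false, and your ``if'' direction, applied to such a word, fails (the double-star analysis is only valid for a genuinely four-syllable \emph{linear} word).

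The same error makes your contrapositive in the ``only if'' direction skip precisely the case that carries the content of the lemma: $\beta(i,i+1)$ with five linear syllables (first and last letter equal), i.e.\ four cyclic syllables but not of the displayed form. That case must be shown to force a cycle, and in fact your own construction already does it: writing $\beta(i,i+1)=s_i^{a_1}s_{i+1}^{b_1}s_i^{a_2}s_{i+1}^{b_2}s_i\cdots$, the bricks $p$, $p'$, $q$ exist and give $q\to p\to\cdots\to p'\to q$ exactly as you wrote, with no need for a sixth syllable. So the repair is to delete the cyclic-rotation preamble, keep the condition as a condition on the linear word $\beta(i,i+1)$, run the tree argument only in the exact four-syllable case, and run the cycle construction whenever the linear word continues past its fourth syllable; with that change your argument coincides with (and fills in the details of) the paper's proof.
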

\begin{proof} The \emph{if} direction is obvious. To see the \emph{only if} direction, let us assume without loss of generality that $\beta(i,i+1)$ begins with $s_i$. If $\beta(i,i+1)$ does not end after $s_i^{a_1}s_{i+1}^{b_1}s_i^{a_2}s_{i+1}^{b_2}$, then there is at least one $s_i$ after $s_{i+1}^{b_2}$, giving $Q_\beta$ an $a_2$-cycle between levels $i$ and $i+1$.
\end{proof}

\begin{assumption}\label{mainassumption}
Note that $2$-strand braids correspond to type A quivers. It suffices to consider  braid words $\beta$ of at least 3 strands. Let us single out the generator $s_2$. After necessary rotations, we assume that $\beta$ does not start with $s_2$ but ends with $s_2$, that is, 
$$\beta = w_1s_2^{b_1} w_2 s_2^{b_2}\dotsb w_m s_2^{b_m},$$
where each $w_i$ is a word of $s_1, s_3, s_4, \dotsb, s_{n-1}$. 

We assume that every $w_i$ contains at least one $s_1$ or $s_3$; otherwise, we can move the whole $w_i$ across the $s_2$'s at either end and merge it with $w_{i-1}$ or $w_{i+1}$. We further assume that $\sum b_i$ achieves minimum. Under this assumption, the length of every $w_i(1,3)$ is at least  $2$. Otherwise, with the letters $s_4\dotsb, s_{n-1}$ migrated away, we have $s_2w_is_2=s_2s_1s_2$ or $s_2s_3s_2$, and we can use R3 to reduce $\sum b_i$. 

We assume that $m\geq 2$; otherwise, $Q_\beta$ is disconnected by Lemma \ref{quiverdeg}. Meanwhile, if $m\geq 4$, then after necessarily deleting letters, we land on the case of Lemma \ref{ws4lemma}, and the braid $\beta$ dominates a braid with an acyclic quiver of infinite type.

In the rest of this section, without loss of generality, we assume that 
\begin{equation}
\label{beta.exp}
\beta = w_1s_2^{b_1}w_2s_2^{b_2}\dotsb w_ms_2^{b_m},
\end{equation}where  $b_i \geq 1$, $m=2$ or $3$, and $w_i\succeq s_1^2, s_3^2$ or $s_1s_3$.
\end{assumption}
We prove Proposition \ref{Mainquiver} by induction on the number of strands of $\beta$.

\subsection{Proof of Proposition \ref{Mainquiver} for 3-strand braids{}} \label{sec.3.2}
 If $m=2$ in \eqref{beta.exp}, then $Q_\beta$ is acyclic and therefore the proposition follows. It remains to consider $m=3$.  Suppose that at least one of the $b_i$'s, say $b_3$ after necessary cyclic rotations, is greater than 1.  The proposition follows since 
  \[\beta \succeq w_1s_2{\color{red}{w_2}}s_2w_3s_2^2
    {\succ} 
    w_1s_2^2w_3s_2^2\succeq s_1^2s_2^2s_1^2s_2^2.\]
It remains to consider $b_1=b_2=b_3=1$, i.e., 
\[\beta = s_1^{a_1}s_2s_1^{a_2}s_2s_1^{a_3}s_2.\] 

If two of $a_i$'s, say $a_1$ and $a_2$ after necessary rotations, are equal to 2, then
        \begin{align*}
            \beta &= s_1{\color{blue}{s_1s_2s_1}}s_1s_2s_1^{a_3}s_2 
                \stackrel{\text{R3}}{=} s_1s_2s_1{\color{blue}{s_2s_1s_2}}s_1^{a_3}s_2 
                \stackrel{\text{R3}}{=} s_1s_2s_1s_1s_2s_1s_1^{a_3}{\color{teal}{s_2}} \\
                &\stackrel{\rho}{=} {\color{blue}{s_2s_1s_2}}s_1s_1s_2s_1s_1^{a_3} 
                \stackrel{{\text{R3}}}{=} {\color{teal}{s_1}}s_2s_1s_1s_1s_2s_1s_1^{a_3}
                \stackrel{{\rho}}{=} s_2s_1s_1s_1s_2s_1s_1^{a_3}s_1 = s_2s_1^3s_2s_1^{a_3+1}.
        \end{align*}
The quiver for the last word is  acyclic. The proposition is proved.

Otherwise, at least two of the $a_i$'s, say $a_1$ and $a_2$ after necessary rotations, are greater than 2. The proposition follows since 
        \begin{align*}
        \beta 
            &\succ s_1^{3}s_2s_1^{3}s_2s_1^{2}s_2 
            = s_1^{2}{\color{blue}{s_1s_2s_1}}s_1^{2}s_2s_1^{2}s_2 
            \stackrel{\text{R3}}{=} 
                s_1^{2}s_2{\color{red}{s_1}}s_2s_1^{2}s_2{\color{red}{s_1^{2}}}s_2
            {\succ} s_1^2s_2^2s_1^2s_2^2.
        \end{align*}

\subsection{Proof of Proposition \ref{Mainquiver} for braids of at least 4 strands{}}
Assume that $\beta$ is expressed as in \eqref{beta.exp}. Note that $s_1$ commutes with all other generators in $w_i$. Therefore we further assume that 
\begin{itemize}
    \item $w_i = s_1^{a_i}v_i = v_i s_1^{a_i}$, where $v_i$ is a word of $s_3, \dotsb, s_{n-1}$.
\end{itemize}
We shall start with the proof of the following two lemmas. 

\begin{lem}\label{3.13} Suppose $\beta \succ s_1$ and $\beta \succ s_2$. If $Q_{\beta(1,2)}$ is of Dynkin type $\mathrm{A}$, then there exists an admissible concordance $\Lambda_\gamma\rightarrow \Lambda_\beta$ such that $\gamma$ has fewer strands than $\beta$.
\end{lem}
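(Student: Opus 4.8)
The plan is to use the type-$\mathrm{A}$ hypothesis on $Q_{\beta(1,2)}$ to force the letter $s_1$ down to a single occurrence by braid moves alone, and then to remove the corresponding strand by a positive Markov destabilization.

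\emph{Shape of $\beta(1,2)$.} Write $\beta(1,2)=s_1^{a_1}s_2^{b_1}\cdots s_1^{a_m}s_2^{b_m}$ (the words $v_i$ contribute no $s_1,s_2$). In its brick diagram the $s_1$-bricks form a directed path on level $1$, the $s_2$-bricks a directed path on level $2$, and the only cross-level arrows join a \emph{long} level-$1$ brick (one spanning a whole $s_2$-block) to the adjacent long level-$2$ brick, because any other pair of bricks at adjacent levels is nested, not interlocked. Consequently $Q_{\beta(1,2)}$ is of type $\mathrm{A}$ precisely when those cross-arrows produce neither a cycle nor a branch vertex, i.e.\ each is attached at an endpoint of both paths. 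Imposing Assumption \ref{mainassumption} ($m\le 3$; each $w_i\succeq s_1^2$, $s_3^2$, or $s_1s_3$) together with the connectedness of $Q_\beta$ inherited from Proposition \ref{Mainquiver} — which rules out shapes leaving an isolated $s_1$-brick, such as $\beta(1,2)=s_2^{b}s_1^2s_2^{c}$, as well as the $\mathrm{D}$- and $\widetilde{\mathrm{D}}$-shapes such as $s_1^2s_2s_1^2s_2$ or $s_1^2s_2^2s_1^2s_2^2$ — one finds after a short case check that, up to cyclic rotation, the reversal $\beta\mapsto\beta^{\mathrm{op}}$, and commutations, $\beta(1,2)$ either already contains $s_1$ exactly once, or has the form $s_1^{a}s_2^{b}s_1s_2$.

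\emph{Reduction and lifting.} For $\beta(1,2)=s_1^{a}s_2^{b}s_1s_2$, two cyclic rotations give $s_1s_2s_1^{a}s_2^{b}$, and iterating the R3 move $s_1s_2s_1\rightsquigarrow s_2s_1s_2$ collapses the block, producing $s_2^{a}s_1s_2^{b+1}$, with a single $s_1$. Only the collapsing direction of R3 is used, and that is exactly what makes the reduction lift to $\beta$: as $s_1$ commutes with $s_3,\dots,s_{n-1}$, the $s_1$-letters of $\beta$ may be slid freely through the $v_i$'s, so each $s_1s_2s_1$ occurring in the reduction of $\beta(1,2)$ can be brought into contiguity inside $\beta$ and the R3 performed within the $\{s_1,s_2\}$-subword without disturbing the $v_i$'s; we never have to push an $s_2$ past a $v_i$, which would be illegal. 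Executing this sequence inside $\beta$ gives a braid word $\gamma'$ on $n$ strands with a single $s_1$; a cyclic rotation brings that $s_1$ to the front, and R1 deletes it, leaving a braid word $\gamma$ on $n-1$ strands.

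\emph{Conclusion.} Since R1, R3, $\rho$, and $c$ each induce a Legendrian isotopy of the associated rainbow-closure Legendrian, their composite is a Legendrian isotopy $\Lambda_\gamma\simeq\Lambda_\beta$, hence in particular an admissible concordance $\Lambda_\gamma\to\Lambda_\beta$; and $\gamma$ has $n-1<n$ strands, as required. I expect the main difficulty to lie in the first step: listing all type-$\mathrm{A}$ shapes of $\beta(1,2)$ allowed by Assumption \ref{mainassumption}, checking in each case that the reduction to ``$s_1$ occurs once'' uses only moves that survive the lift to $\beta$ (never commuting an $s_2$ past a $v_i$), and verifying that the stubborn shapes — two separated $s_1$-blocks, or an $s_1$-block flanked by $s_2$'s on both sides — are genuinely excluded, either because their quiver is not of type $\mathrm{A}$ or because they would disconnect $Q_\beta$.
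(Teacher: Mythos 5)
Your proposal is correct and takes essentially the same route as the paper: the type-$\mathrm{A}$ hypothesis pins $\beta(1,2)$ to the four-block form $s_1^{a_1}s_2^{b_1}s_1^{a_2}s_2^{b_2}$ with $\min\{a_1,a_2\}=\min\{b_1,b_2\}=1$ (your normal form $s_1^{a}s_2^{b}s_1s_2$ is just a rotation/reversal of the paper's normalization $a_1=b_1=1$), and then the iterated R3 identity $s_1s_2s_1^{a}=s_2^{a}s_1s_2$, lifted to $\beta$ by commuting the $s_1$'s past $s_3,\dots,s_{n-1}$, leaves a single $s_1$ which is removed by positive Markov destabilization, exactly as in the paper's one-line computation. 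Your extra appeal to Assumption \ref{mainassumption} and to connectedness of $Q_\beta$ is unnecessary (the connected type-$\mathrm{A}$ hypothesis on $Q_{\beta(1,2)}$ alone forces the classification, which is all the paper uses) but harmless.
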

\begin{proof} Since $Q_{\beta(1,2)}$ is of Dynkin type $\mathrm{A}$, $\beta(1,2)$ must be of the form $s_1^{a_1}s_2^{b_1}s_1^{a_2}s_2^{b_2}$ with $\min\{a_1, a_2\}=\min\{b_1,b_2\}=1$. After necessary cyclic rotations and/or taking the opposite word, we assume $a_1=b_1=1$. Then
\[
\beta = v_1 {\color{blue}s_1 s_2 s_1^{a_2}} v_2 s_2^{b_2} \stackrel{\textrm{R3}}{=} v_1 s_2^{a_2} {\color{teal}s_1} s_2 v_2 s_2^{b_2}   \stackrel{\textrm{R1}}{=}v_1 s_2^{a_2} s_1 s_2 v_2 s_2^{b_2}.
\]
The braid reduces to the case of one less strand. 
\end{proof}

\begin{lem} \label{A sub-tree} Suppose $Q_\beta$ is connected. If $Q_{\beta(1,3)}$ is acyclic and $Q_{\beta(1,2)}$ is not of type $\mathrm{A}$, then Proposition \ref{Mainquiver} is true for $[\beta]$.
\end{lem}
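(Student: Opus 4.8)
The plan is to dichotomize on the representation type of the quiver $Q_{\beta(1,3)}$, which is acyclic by hypothesis. If $Q_{\beta(1,3)}$ is of infinite type, then deleting every letter $s_4,\dots,s_{n-1}$ from $\beta$ exhibits $\beta\succ\beta(1,3)$, so $\beta$ dominates the braid word $\beta(1,3)$, whose quiver is acyclic of infinite type; this is scenario~(2) of Proposition~\ref{Mainquiver} and we are done. So assume $Q_{\beta(1,3)}$ is of finite type, i.e.\ a disjoint union of Dynkin diagrams.

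Erasing level $3$ of the brick diagram of $\beta(1,3)$ leaves the bricks on levels $1,2$ unchanged, so $Q_{\beta(1,2)}$ is a full subquiver of $Q_{\beta(1,3)}$, hence also a disjoint union of Dynkin diagrams. Since $Q_{\beta(1,2)}$ is acyclic (and simply laced) but not of type $\mathrm A$, it has a vertex of degree $\ge 3$, hence a full $\mathrm D_4$-subquiver; consequently $Q_{\beta(1,3)}$ contains a $\mathrm D_4$ and therefore has a component of type $\mathrm D_r$ $(r\ge4)$ or $\mathrm E_6,\mathrm E_7,\mathrm E_8$. If $\beta$ has exactly $4$ strands, then $Q_\beta=Q_{\beta(1,3)}$ is of finite type and scenario~(1) holds with $\gamma=\beta$. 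It remains to treat the core subcase: $n\ge 5$, $Q_\beta$ connected, and $Q_{\beta(1,3)}$ acyclic of finite type with a $\mathrm D$- or $\mathrm E$-component.

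In that subcase, connectedness of $Q_\beta$ together with Lemma~\ref{quiverdeg} forces level $3$ to be nonempty and joined to both level $2$ and level $4$: in particular $\beta\succ s_3^2$, and each of $\beta(2,3)$, $\beta(3,4)$ contains an intertwining pair $s_js_{j+1}s_js_{j+1}$ or $s_{j+1}s_js_{j+1}s_j$, while by Lemma~\ref{3.10} the words $\beta(1,2),\beta(2,3)$ have the normal form $s_i^{a_1}s_{i+1}^{b_1}s_i^{a_2}s_{i+1}^{b_2}$. I would then run the braid calculus of Lemma~\ref{ws4lemma} and Subsection~\ref{sec.3.2} --- iterated deletions $\succ$ together with the moves $\overset{\mathrm{R1}}{=}$, $\overset{\mathrm{R3}}{=}$, $\overset{\rho}{=}$, $\overset{c}{=}$ --- to reach one of two conclusions. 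Either $\beta$ dominates an acyclic braid of infinite type, e.g.\ one of the model braids $s_i^2s_{i+1}^2s_i^2s_{i+1}^2$ or $s_1s_3s_2^2s_1s_3s_2^2$ of Lemma~\ref{basicbraidsinf}, which is scenario~(2); or, after braid moves, an outermost generator occurs exactly once, an $\overset{\mathrm{R1}}{=}$ destabilization produces an admissible concordance $\Lambda_\gamma\to\Lambda_\beta$ with $\gamma$ on fewer strands and $Q_\gamma$ connected, and the inductive hypothesis of Proposition~\ref{Mainquiver} applies to $\gamma$: scenario~(1), respectively~(2), for $\gamma$ transports to $\beta$ by composing, respectively precomposing, with this concordance.

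The infinite-type case and the reductions above are short; the real obstacle is the core subcase of Case~2. There one must verify that the two alternatives --- dominating a model braid, or decreasing the strand number --- are genuinely exhaustive, uniformly across the Dynkin types $\mathrm D_r,\mathrm E_6,\mathrm E_7,\mathrm E_8$ that can occur for the level-$(1,2)$ part and across the values $m=2,3$ of the normal form~\eqref{beta.exp}, while keeping track of the legality of each braid move (notably that $\overset{\mathrm{R1}}{=}$ is available only for a generator occurring exactly once). I expect this to be a finite but intricate case analysis, in the spirit of Cases~1--3 in the proof of Lemma~\ref{ws4lemma}.
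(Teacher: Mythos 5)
Your easy reductions are fine and match the paper's opening moves: if $Q_{\beta(1,3)}$ is of infinite type then $\beta\succ\beta(1,3)$ gives scenario (2), and if $\beta$ has only $4$ strands then $Q_\beta=Q_{\beta(1,3)}$ is finite type and scenario (1) is trivial. But the lemma's entire content is the remaining case, and there you do not give a proof: you state a dichotomy (``either $\beta$ dominates a model braid of Lemma \ref{basicbraidsinf}, or after braid moves an outermost generator occurs exactly once and an R1 destabilization reduces the strand number'') and then explicitly concede that its exhaustiveness is unverified. That is the gap. Moreover, the second branch of your dichotomy is doubtful as stated: the strand-reduction mechanism of Lemma \ref{3.13} is tied to $Q_{\beta(1,2)}$ being of type $\mathrm{A}$, which is exactly what the hypotheses of this lemma exclude, and nothing you say produces a generator occurring exactly once, so there is no reason to expect an R1 move to ever become available.

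The paper closes this case by a different and quite specific induction, not by a case analysis over Dynkin types or by strand reduction. Set $k=\max\{i\mid Q_{\beta(1,i)}\text{ is acyclic}\}\ (\geq 3)$. If $Q_{\beta(1,k)}$ is of infinite type, $\beta\succ\beta(1,k)$ gives scenario (2); if $k=n$ one is done. Otherwise $Q_{\beta(1,k)}$ is finite of type $\mathrm{D}$ or $\mathrm{E}$ with its trivalent vertex confined to levels $1$--$2$, so $Q_{\beta(k-1,k)}$ is of type $\mathrm{A}$, forcing $\beta(k-1,k)=s_{k-1}^{e_1}s_k^{f_1}s_{k-1}^{e_2}s_k^{f_2}$ (or its reverse) with $\min\{f_1,f_2\}=1$. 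After commutations one writes $\beta=\gamma_1\delta_1\gamma_2\delta_2$ and normalizes $\delta_1=s_k$. Then either $\delta_2\succ s_{k+1}s_k^2s_{k+1}$ or $\delta_2\succ s_{k+1}^2s_ks_{k+1}^2$, in which case $\beta$ dominates an explicit acyclic infinite-type quiver (a long type-$\mathrm{A}$ chain with a short tail, drawn in the paper --- not one of the two model braids of Lemma \ref{basicbraidsinf}); or else $\delta_2(k,k+1)=s_k^{g_1}s_{k+1}^{h_1}s_ks_{k+1}^{h_2}s_k^{g_3}$ with some $h_i=1$, and a single R3 move plus cyclic rotation (performed so as not to disturb $Q_{\beta(1,k)}$) yields a word $\beta'$ with $Q_{\beta'(1,k+1)}$ acyclic, after which the argument repeats with $k+1$. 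This ``push the acyclic level upward or exhibit domination'' engine is what makes the alternatives exhaustive; without it, or an equally concrete substitute, your proposal does not establish the lemma.
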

\begin{proof} Define 
\[k: = \max\{i ~|~  Q_{\beta(1,i)} \mbox{ is acyclic}\}.
\]
If $k=n$, then $Q_\beta$ is acyclic and the Lemma is proved. If $Q_{\beta(1,k)}$ is of infinite type, then the Lemma follows since  $\beta \succ \beta(1,k)$. Now we assume $k<n$ and $Q_{\beta(1,k)}$ is of finite type.

Note that $Q_{\beta(1,3)}$ is a subquiver of $Q_{\beta(1,k)}$. By assumption, $Q_{\beta(1,2)}$ is not of type $\mathrm{A}$. Therefore $Q_{\beta(1,k)}$ must be of type $\mathrm{D}$ or $\mathrm{E}$. Hence, $Q_{\beta(i,j)}$ is of type $\mathrm{A}$ for $1<i<j\leq k$.
In particular, $Q_{\beta(k-1,k)}$ is of type $\mathrm{A}$ and $Q_\beta$ is connected. Therefore we have
$$\beta(k-1,k) = s_{k-1}^{e_1} s_{k}^{f_1} s_{k-1}^{e_2} s_{k}^{f_2},\quad \text{or} \quad \beta(k-1,k) = s_{k}^{f_1} s_{k-1}^{e_1} s_{k}^{f_2}s_{k-1}^{e_2},$$
where
$$\min\{e_1,e_2\} = \min\{f_1,f_2\}=1.$$

Below we consider the first case $\beta(k-1,k) = s_{k-1}^{e_1} s_{k}^{f_1} s_{k-1}^{e_2} s_{k}^{f_2}$. The second case follows by taking the opposite word of $\beta$. The letters $s_1,\dotsb, s_{k-2}$ commute with  $s_{k+1},\dotsb, s_n$. After necessary communications of the letters in $\beta$, we can write
$$\beta = \gamma_1\delta_1\gamma_2\delta_2,$$
where $\gamma_i~ (i=1,2)$ is a word of  $s_1\dotsb, s_{k-1}$ with $e_i$ many of $s_{k-1}$, and $\delta_i$ is a word of $s_k,\dotsb, s_n$ with $f_i$ many of $s_{k}$. We remark that we have \emph{not} performed cyclic rotations yet and will only do it carefully, so that the quiver for $\beta(1,k-1)=\gamma_1\gamma_2$ is not distorted. 

Recall that $\min\{f_1,f_2\} = 1$. We consider the case $f_1=1$. The argument for $f_2=1$ is a similar repetition. Let us write 
$\delta_1 = xs_k y,$
where $x,y$ are words of $s_{k+1}\dotsb, s_n$ and they commute with $\gamma_1,\gamma_2$. We pass $y$ through $\gamma_2$, and we pass $x$ through $\gamma_1$ and rotation, obtaining $\gamma_1(xs_ky)\gamma_2\delta_2 \rightsquigarrow \gamma_1 s_k \gamma_2 (y\delta_2x)$. This move does not change the quiver for $\beta (1,k)$, and is a Legendrian isotopy. Consequently, we can assume $\delta_1 = s_k$  and write $\beta=\gamma_1s_k\gamma_2\delta_2$.

Now we consider 
\[\delta_2(k,k+1)=s_k^{g_1}s_{k+1}^{h_1}s_k^{g_2}s_{k+1}^{h_2}\cdots s_k^{g_l},\] 
where $g_1, g_l\geq 0$ and all other powers $\geq 1$.
The Lemma holds for the following two cases. 
\begin{enumerate}
    \item If $\delta_2\succ s_{k+1}s_k^2s_{k+1}$, then $\beta \succ \gamma_1 s_k \gamma_2s_{k+1}s_k^2s_{k+1}:=\beta_1$. 
    
    \item If $\delta_2 \succ s_{k+1}^2s_ks_{k+1}^2$, then $\beta \succ \gamma_1 s_k \gamma_2s_{k+1}^2s_ks_{k+1}^2:=\beta_2$. 
\end{enumerate}
The quivers for $\beta_1$ and $\beta_2$ are acyclic and of infinite type, as depicted below: 
$$
\begin{tikzpicture}
    \filldraw (0,-1) circle (2pt);
    \filldraw (-1,0) circle (2pt);
    \node at (-1.5,0) [] {$\dotsb$};
    \filldraw (0,0) circle (2pt);
    \filldraw (1,0) circle (2pt);
    \node at (1.5,0) [] {$\dotsb$};
    \draw [thick,->](0.15,0) -- (0.85, 0);
    \draw [thick,<-](-0.15,0) -- (-0.85, 0);
    \draw [thick,<-](0,-0.15) -- (0, -0.85);
    \node at (0, -1.3) [] {\vdots};
    \filldraw (0,-1.8) circle (2pt);
    \filldraw (1,-1.8) circle (2pt);
    \filldraw (0,-2.8) circle (2pt);
    \draw [thick,->](0.15,-1.8) -- (0.85, -1.8);
    \draw [thick,->](0,-2.65) -- (0, -1.95);
    \node at (-2, -2.3) [] {$(1)$};
\end{tikzpicture}
\qquad\quad
\begin{tikzpicture}
    \filldraw (0,-1) circle (2pt);
    \filldraw (-1,0) circle (2pt);
    \node at (-1.5,0) [] {$\dotsb$};
    \filldraw (0,0) circle (2pt);
    \filldraw (1,0) circle (2pt);
    \node at (1.5,0) [] {$\dotsb$};
    \draw [thick,->](0.15,0) -- (0.85, 0);
    \draw [thick,<-](-0.15,0) -- (-0.85, 0);
    \draw [thick,<-](0,-0.15) -- (0, -0.85);
    \node at (0, -1.3) [] {\vdots};
    \filldraw (0,-1.8) circle (2pt);
    \filldraw (1,-2.8) circle (2pt);
    \filldraw (-1,-2.8) circle (2pt);
    \filldraw (0,-2.8) circle (2pt);
    \draw [thick,->](0.15,-2.8) -- (0.85, -2.8);
    \draw [thick,->](-0.85,-2.8) -- (-0.15, -2.8);
    \draw [thick,->](0,-2.65) -- (0, -1.95);
    \node at (-2, -2.3) [] {$(2)$};
\end{tikzpicture}
$$

By the definition of $k$, the quiver for $\beta(k,k+1)$ is not acyclic. Therefore we have $\delta_2\succeq s_{k+1}s_ks_{k+1}s_k$. We assume that $\delta_2$ does not satisfy the above $(1)$ or $(2)$. Then
 \[\delta_2(k,k+1)=s_k^{g_1}s_{k+1}^{h_1}s_ks_{k+1}^{h_2}s_k^{g_3},\]  where $g_1\geq 0$, $g_3\geq 1$, and $\min\{h_1,h_2\} =1$. Depending on whether $h_1=1$ or $h_2=1$, we have the following two cases: 
\[
\beta(1,k+1)
=
\gamma_1s_k\gamma_2{\color{blue}s_k^{g_1}s_{k+1}s_k}s_{k+1}^{h_2}s_k^{g_3}
\overset{\text{R3}}{=}
\gamma_1s_k\gamma_2s_{k+1}{\color{teal}s_ks_{k+1}^{g_1+h_2}s_k^{g_3}}
\overset{\rho}{=}
s_ks_{k+1}^{g_1+h_2}s_k^{g_3}\gamma_1s_k\gamma_2s_{k+1},
\]
\[
\beta(1,k+1)
=
\gamma_1s_k\gamma_2s_k^{g_1}s_{k+1}^{h_1}{\color{blue}s_ks_{k+1}s_k^{g_3}}
\overset{\text{R3}}{=}
\gamma_1s_k\gamma_2s_k^{g_1}s_{k+1}^{h_1+g_3}s_ks_{k+1}.
\]
In both cases, the only R3 move is $s_ks_{k+1}s_k=s_{k+1}s_ks_{k+1}$ performed in $\delta_2$, hence the move can be extended from $\beta(1,k+1)$ to $\beta$. The cyclic rotations can also be extended to $\beta$ without changing the quiver for $\beta(1,k)$. In the end, we performed a Legendrian isotopy and get a new braid word $\beta'$ with acyclic $Q_{\beta'(1,k+1)}$. We repeat the above argument for $\beta'$ and $k+1$.
This completes the case $f_1=1$. 
\end{proof}

Now we prove the proposition. If $m=2$ in \eqref{beta.exp},  then $Q_{\beta(1,3)}$ is acyclic. If $Q_{\beta(1,2)}$ is not of type $\mathrm{A}$, then the proposition follows directly from Lemma \ref{A sub-tree}. Otherwise, we apply Lemma \ref{3.13}. It remains to consider the case $m=3$, in which we have
\begin{equation}\beta =w_1s_2^{b_1}w_2s_2^{b_2}w_3s_2^{b_3}= v_1s_1^{a_1}s_2^{b_1}v_2s_1^{a_2}s_2^{b_2}v_3s_1^{a_3}s_2^{b_3}.
\end{equation}

Let us set
\[
p= \# \{i~|~a_i\neq 0 \}, \hskip 14mm q=\#\{i~|~ v_i \succeq s_3\}.
\] Here $p,q \in \{2, 3\}$. We consider cases by $(p,q)$.

\medskip
\paragraph{\it Case 1: $(p,q) = (2,2)$} 
After suitable cyclic rotation, we assume $a_3=0$. Then $Q_{\beta(1,3)}$ is acyclic. The rest goes through the same line as the above proof for the case $m=2$.

 \medskip   
 \paragraph{\it Case 2: $(p,q) = (2,3)$} After suitable cyclic rotation, we assume $a_3=0$. If $b_1 \geq 2$, then 
 \[\beta= v_1s_1^{a_1}s_2^{b_1}v_2s_1^{a_2}s_2^{b_2}v_3s_2^{b_3} \succ v_1s_1s_2^{b_1}v_2s_1s_2^{b_2+b_3} \succeq s_3s_1s_2^2 s_3s_1s_2^2.
 \]The proposition follows. So we assume $b_2=1$. 
 
 Now if $a_2 =1$, then using  $s_1^{a_1}s_2s_1 = s_2s_1s_2^{a_1}$, we can reduce the number of strands. The same argument works for $a_1=1$.
It remain to consider $a_1 \geq 2$ and $a_2\geq 2$. Then 
    $$\beta \succeq 
    s_1^2 {\color{red}{v_1}} s_2 {\color{red}{v_2}} s_1^2 s_2 {\color{red}{v_3}} s_2 
    {\succ} 
    s_1^2 {\color{blue}s_3 s_2 s_3} s_1^2s_2 s_2 
    \stackrel{\textrm{R3}}{=}
    s_1^2 s_2 {\color{red}{s_3}} s_2 s_1^2s_2 s_2
    {\succ} 
    s_1^2 s_2^2  s_1^2s_2^2.
    $$
 
 \medskip   
 \paragraph{\it Case 3: $(p,q) = (3,3)$} We have 
 \[w_i=v_is_i^{a_i}\succeq s_3s_1,  \quad \quad \forall i=1,2,3.\] 
 If there is a $b_i$, say $b_1$ after necessary cyclic rotation, greater than $1$, then 
 \[\beta \succ s_3s_1 s_2^{b_1}s_3s_1s_2^{b_2+b_3}\succeq s_1s_3s_2^2s_1s_3s_2^2.
 \]
The proposition follows. It remains to consider  $b_1= b_2=b_3 =1$.

If there is some $w_i$ with $w_i(1,3) = s_1s_3$, after suitable cyclic rotation we can assume  $w_2(1,3) = s_1s_3$. Note that the rest letters of $w_2$ are $s_4, \ldots, s_n$. They commute with the $s_2$ at either end and can be merged into $w_1$ or $w_3$. Therefore, we may assume $w_2=s_1s_3$ and  use  identity 
   \begin{equation}\label{3strandstocancel1}
        s_1^{a_1}s_2s_1s_3s_2s_1^{a_3} = s_3^{a_3}s_2s_1s_3s_2s_3^{a_1}
    \end{equation}
to reduce the number of strands.
    
    If none of the $w_i$'s has $w_i(1,3) = s_1s_3$. Then $w_i(1,3) \succeq s_1^2s_3$ or $w_i(1,3) \succeq s_1s_3^3$ for $i=1,2,3$. Two of them must be the same kind, and they have adjacent indices after cyclic rotation. For example, if $w_1,w_2$ are of the same type $s_1s_1s_3$, then
    $$\beta \succeq 
    s_1s_3s_1 s_2 s_1s_3s_1 s_2 {\color{red}{w_3}} s_2
    {\succ} 
    s_1s_3 {\color{blue}{s_1s_2s_1}} s_3s_1s_2s_2
    \stackrel{\textrm{R3}}{=}
    s_1s_3 s_2 {\color{red}{s_1}} s_2 s_3s_1s_2s_2
    {\succ} 
    s_1s_3 s_2^2 s_3s_1s_2^2.
    $$
    Other combinations of $w_i$ are similar.

\medskip
 \paragraph{\it Case 4: $(p,q) = (3,2)$} If $\beta$ is a $4$-strand word, then by the symmetry between $s_1$ and $s_3$, it reduces to the case $(p,q)=(2,3)$. Below we assume $\beta$ is at least of $5$ strands.

 After cyclic rotations, we assume that $v_3$ does not contain  $s_3$. Then $v_3$ commutes with $s_2$ and can be merged into $w_2$. Hence we assume that  \[w_1\succ s_1s_3, \quad \quad w_2\succ s_1s_3, \quad \quad w_3 = s_1^{a_3} \mbox{ with } a_3\geq 2.\] 
If $b_1\geq 2$, then the proposition follows since
\[\beta\succeq s_1s_3s_2^2s_1s_3s_2 {\color{red}{w_3}} s_2 
{\succ} 
s_1s_3s_2^2s_1s_3s_2^2.\]  Below we consider $b_1=1$.

If $w_1\succ s_1s_3^2$ and $w_2\succ s_1s_3^2$, then the proposition follows since
\[\beta\succeq 
s_1s_3 {\color{blue}{s_3s_2s_3}} s_3s_1s_2^2
\stackrel{\textrm{R3}}{\succ}
s_1s_3s_2{\color{red}{s_3}}s_2s_3s_1s_2^2
{\succ} 
s_1s_3s_2^2s_1s_3s_2^2.\] 
 Hence, we assume that one of $w_1, w_2$ contains a single $s_3$. After suitable cyclic rotations and taking the opposite word if necessary, we assume that $w_2$ contains a single $s_3$. Moreover, all the letters $s_4,\dotsb, s_{n-1}$ in $w_2$ can be merged to $w_1$ by moving them in two directions and taking necessary cyclic rotations.
To summarize, it remains to consider 
\[\beta = v_1 s_1^{a_1} s_2 s_1^{a_2} s_3 s_2^{b_2} s_1^{a_3} s_2^{b_3},\quad \mbox{where }v_1\succeq s_3,~a_3\geq 2,~ \mbox{and }a_1, a_2, b_2, b_3\geq 1.\]
 We split our proof into two cases based on the value of $a_2$.

\medskip

A. If $a_2 =1$, then $b_2\geq 2$. Otherwise, $\beta =v_1 {\color{purple}{s_1^{a_1} s_2 s_1 s_3 s_2 s_1^{a_3}}} s_2^{b_3}$, and we can apply Identity \eqref{3strandstocancel1} to the purple part to reduce the number of strands. We further assume $b_3 =1$; otherwise, $b_3\geq 2$, and together with $a_3,b_2\geq 2$, we have
    $$\beta \succeq s_1^{a_1}{\color{red}{s_2}}s_1^{a_2}s_2^{b_2}s_1^{a_3}s_2^{b_3}
    {\succ} 
    s_1^{a_1+a_2}s_2^{b_2}s_1^{a_3}s_2^{b_3}
    {\succeq}
    s_1^{2}s_2^{2}s_1^{2}s_2^{2}.
    $$
    To recollect, we have
    \[\beta = v_1 s_1^{a_1} s_2 s_1^{a_2} s_3 s_2^{b_2\geq 2} s_1^{a_3\geq 2} s_2.\] 
    If $w_1(1,3)=s_1s_3$, then $w_1 =x s_1s_3 y$. After rotating $s_1^{a_3} s_2$, and moving $x,y$, we have
    $$
    \beta = 
    x s_1s_3 y s_2 s_1^{a_2} s_3 s_2^{b_2} {\color{teal}{s_1^{a_3} s_2}}
    \stackrel{\rho}{=}
    s_1^{a_3} s_2 {\color{teal}{x}} s_1s_3 {\color{teal}{y}} s_2 s_1^{a_2} s_3 s_2^{b_2}
    \stackrel{\textrm{c},\rho}{=}
    {\color{purple}{s_1^{a_3} s_2 s_1s_3 s_2 s_1^{a_2}}} ys_3 s_2^{b_2}x.
    $$
    We apply identity \eqref{3strandstocancel1} to the purple part to reduce the number of strands. 
    Therefore we can assume $w_1(1,3)\succeq s_1s_3^2$ or $s_1^2s_3$. 
    
    Now we focus on $w_1(1,4)$. The connectedness of $Q_\beta$ implies that $w_1(1,4)$ has at least two copies of $s_4$, with at least one $s_3$ sandwiched in between. Hence there are four possibilities:
    $$
    w_1(1,4)\succeq \; 
    (a)\; s_1^2s_4s_3s_4, \;
    (b)\; s_1s_4s_3s_3s_4, \;
    (c)\; s_1s_4s_3s_4s_3, \;
    (d)\; s_1s_3s_4s_3s_4. 
    $$
The proposition follows via direct calculations:
    \begin{enumerate}
        \item[($a$)] $w_1(1,4)\succeq s_1^2s_4s_3s_4 =  s_1^2s_3s_4s_3\succ s_1^2s_3^2$. Then
        $$
        \quad \quad \quad \beta \succ s_1^2{\color{blue}{s_3^2s_2s_3}}s_1s_2^2s_1^2s_2
        \stackrel{\textrm{R3}^2}{=}
        s_1^2s_2s_3s_2{\color{blue}{s_2s_1s_2}}s_2s_1^2s_2
        \stackrel{\textrm{R3}}{=}
        s_1^2s_2{\color{red}{s_3}}s_2s_1{\color{red}{s_2}}s_1s_2{\color{red}{s_1^2}}s_2
        {\succ} 
        s_1^2s_2^2s_1^2s_2^2.
        $$
        
        \item[($b$)] $w_1(1,4)\succeq s_1s_4s_3s_3s_4$. Then
        \begin{align*}
        \beta 
        & \succeq s_1{\color{teal}{s_4}}s_3^2{\color{teal}{s_4}}s_2s_1s_3s_2^2s_1^2s_2
        \stackrel{\textrm{c},\rho}{=}
        s_1s_3^2s_2s_1{\color{blue}{s_4s_3s_4}}s_2^2s_1^2s_2
        \stackrel{\textrm{R3}}{=}
        s_1s_3^2s_2s_1s_3{\color{red}{s_4}}s_3s_2^2s_1^2s_2 \\
        &{\succ} 
        s_1s_3^2s_2s_1{\color{teal}{s_3}}s_3s_2^2s_1^2s_2
        \stackrel{\textrm{c}}{=}
        s_1s_3{\color{blue}{s_3s_2s_3}}s_1s_3s_2^2s_1^2s_2
        \stackrel{\textrm{R3}}{=}
        s_1s_3s_2{\color{red}{s_3}}s_2s_1s_3s_2^2{\color{red}{s_1^2s_2}}\\
        &{\succ}  s_1s_3s_2^2s_1s_3s_2^2.
        \end{align*}
        
        \item[($c$)]  $w_1(1,4)\succeq s_1s_4s_3s_4s_3 = s_1s_3s_4s_3s_3 \succ s_1s_3^3$. Then
        \begin{align*}
        \beta 
        & \succeq s_1{\color{blue}{s_3^3s_2s_3}}s_1s_2^2s_1^2s_2
        \stackrel{\textrm{R3}}{=}
        s_1s_2{\color{teal}{s_3}}s_2^3s_1s_2^2s_1^2s_2
        \stackrel{\textrm{R1}}{\succ}
        s_1s_2^4s_1s_2^2{\color{teal}{s_1^2s_2}} \\
        &\stackrel{\rho}{=}
        {\color{blue}{s_1^2}}s_2s_1s_2^4s_1s_2^2
        \stackrel{\textrm{R3}}{=}
        {\color{teal}{s_2}}s_1s_2^2s_2^4s_1s_2^2
        \stackrel{\rho}{=}
        s_1s_2^6s_1s_2^3.
        \end{align*}
        We end up with an $\mathrm{E}_9$ quiver, which is acyclic and of infinite type.
        
        \item[($d$)] $w_1(1,4)\succeq s_1s_3s_4s_3s_4 = s_1s_3s_3s_4s_3 \succ s_1s_3^3$. The rest follows from the same calculation as in $(c)$.
    \end{enumerate}

\medskip    

B. If $a_2\geq 2$, then we look at $w_1(1,3)$. 

If $w_1(1,3) = s_1s_3$, then $v_1=xs_3y$, where $x,y$ are words of $s_4, \ldots, s_{n-1}$.  Let $\tilde{x}$ and $\tilde{y}$ be the opposite word of $x$ and $y$ respectively. Then 
    \[
    \beta = {\color{teal}{x}}s_3y s_1 s_2 s_1^{a_2} s_3 {\color{teal}s_2^{b_2} s_1^{a_3} s_2^{b_3}} \stackrel{\rho, c}{=} s_2^{b_2} s_1^{a_3} s_2^{b_3}  s_3  s_1 s_2 s_1^{a_2} ys_3x \stackrel{\textrm{oppo}}{\rightsquigarrow} \tilde{x}s_3\tilde{y}s_1^{a_2}s_2s_1s_3s_2^{b_3}s_1^{a_3}s_2^{b_2}.
    \]
 It goes back to Case A. 
 
 If $w_1(1,3) \succeq s_1^2s_3$, then 
    $$
    \beta
    \succeq s_3s_1{\color{blue}{s_1s_2s_1}}s_1s_3s_2^{b_2}s_1^{a_3}s_2^{b_3}
    \stackrel{\textrm{R3}}{=}
    s_3s_1s_2{\color{red}{s_1}}s_2s_1s_3{\color{red}{s_2^{b_2}s_1^{a_3}s_2^{b_3}}}
    {\succ} s_1s_3s_2^2s_1s_3s_2^2.
    $$
    
It remains to consider $w_1(1,3)\succeq s_1s_3^2$.  There are three possibilities for $w_1(1,4)$:
    $$
    w_1(1,4)\succeq \; 
    (e)\; s_1s_4s_3s_4s_3, \;
    (f)\; s_1s_3s_4s_3s_4, \;
    (g)\; s_1s_4s_3s_3s_4. 
    $$
For both ($e$) and ($f$), we have $w_1(1,4)\succ s_1s_3^3$. Then 
        \begin{align*}
         \beta 
         & \succ s_1{\color{blue}{s_3^3s_2s_3}}s_1^2s_2s_1^2s_2
        \stackrel{\textrm{R3}}{=}
        s_1s_2{\color{teal}{s_3}}s_2^3s_1^2s_2s_1^2s_2
        \stackrel{\textrm{R1}}{=}
        s_1s_2^4s_1^2{\color{teal}{s_2s_1^2s_2}}  \\
        & \stackrel{\rho}{=}
        s_2s_1^2s_2s_1s_2^4s_1^2
        \stackrel{\textrm{R3}}{=}
        s_2s_1^2s_1^4s_2s_1s_1^2
        =
        s_2s_1^6s_2s_1^3
        \end{align*}
        This is again the $\mathrm{E}_9$ quiver.
For ($g$), we have $w_1(1,4)\succeq s_1s_4s_3s_3s_4$. Then
        \begin{align*}
        \beta 
        &\succeq
        s_1{\color{teal}{s_4}}s_3^2{\color{teal}{s_4}}s_2s_1^2s_3s_2s_1^2s_2
        \stackrel{\textrm{c},\rho}{=}
        s_1s_3^2s_2s_1^2{\color{blue}{s_4s_3s_4}}s_2s_1^2s_2
        \stackrel{\textrm{R3}}{=}
        s_1s_3^2s_2s_1^2s_3{\color{red}{s_4}}s_3s_2{\color{red}{s_1^2}}s_2 \\
        &{\succ} 
        s_1s_3^2s_2s_1^2{\color{teal}{s_3}}s_3s_2s_2
        \stackrel{\textrm{c}}{=}
        s_1s_3{\color{blue}{s_3s_2s_3}}s_1^2s_3s_2s_2
        \stackrel{\textrm{R3}}{=}
        s_1s_3s_2{\color{red}{s_3}}s_2{\color{red}{s_1}}s_1s_3s_2s_2 \\
        &{\succ} 
        s_1s_3s_2^2s_1s_3s_2^2.
        \end{align*}

We complete the proof of Proposition \ref{Mainquiver}.

\begin{cor} For positive braids $[\beta]$ with connected $Q_\beta$, the two cases in Proposition \ref{Mainquiver} coincides with the dichotomy between finite and infinite types for positive braids.
\end{cor}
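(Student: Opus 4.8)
The plan is to simply package together the two preceding propositions together with the definitional dichotomy for positive braids, so the argument is very short. First I would invoke Proposition \ref{Mainquiver}: for any positive braid $[\beta]$ with connected $Q_\beta$, at least one of the two scenarios (1) and (2) occurs. Next I would invoke Proposition \ref{3.4}, which supplies the two ``crosswise'' implications: if scenario (1) occurs then $[\beta]$ is of finite type, and if scenario (2) occurs then $[\beta]$ is of infinite type. Finally I would bring in the fact that ``finite type'' and ``infinite type'' are, by definition (via whether $Q_\beta$ is mutation equivalent to a finite type quiver), complementary and mutually exclusive properties of a positive braid.

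Combining these three ingredients upgrades the one-directional implications of Proposition \ref{3.4} to equivalences on the class of braids with connected quiver: if $[\beta]$ is of finite type it cannot satisfy scenario (2) (that would make it of infinite type by Proposition \ref{3.4}), hence by Proposition \ref{Mainquiver} it satisfies scenario (1); symmetrically, if $[\beta]$ is of infinite type it must satisfy scenario (2). Therefore scenario (1) holds if and only if $[\beta]$ is of finite type, and scenario (2) holds if and only if $[\beta]$ is of infinite type, which is exactly the claimed coincidence of the two partitions; as a byproduct one re-derives the exclusiveness of the two scenarios already observed after Proposition \ref{3.4}. I do not expect any real obstacle here, since all the mathematical content sits in Propositions \ref{Mainquiver} and \ref{3.4}; the only point requiring a word of care is the explicit appeal to the finite/infinite dichotomy, which is what lets one turn the implications of Proposition \ref{3.4} into biconditionals.
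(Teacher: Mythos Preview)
Your proposal is correct and matches the paper's approach exactly: the paper's proof is the single line ``It follows from Proposition \ref{Mainquiver} and Proposition \ref{3.4},'' and what you have written is precisely the unpacking of that line, combining the coverage from Proposition \ref{Mainquiver}, the implications from Proposition \ref{3.4}, and the definitional finite/infinite dichotomy to obtain the biconditionals.
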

\begin{proof} It follows from Proposition \ref{Mainquiver} and Proposition \ref{3.4}.
\end{proof}

\noindent \textit{Proof of Theorem \ref{MainStep1} for disconnected $Q_\beta$.} Suppose $Q_\beta$ has two components. Because vertices on the same level are connected, there exists a unique $1\leq i<n$ such that no arrow appears between level $i$ and $i+1$. We consider $\beta(1,i)$ and $\beta(i+1,n-1)$. Since we can pinch some crossings of $\beta$ to obtain $\beta(1,i)$ and $\beta(i+1,n-1)$, if one of them has infinitely many 
admissible fillings, so does $\beta$ by Proposition \ref{3.2}. Otherwise by Propositions \ref{Mainquiver} (1) and \ref{3.4}, both $Q_{\beta(1,i)}$ and $Q_{\beta(i+1,n-1)}$ are mutation equivalent to finite type quivers, and hence $[\beta]$ is of finite type. In general, we can induct on the number of components in the quiver of the braid. \qed

\section{Finite Type Classification} \label{sec 4}

In this section, we focus on positive braid Legendrian links  of finite type.

\begin{thm}\label{Dynkin quiver} Let $\beta$ be a braid word such that $Q_\beta$ is mutation equivalent to a Dynkin quiver and $\Lambda_\beta$ does not contain a split union of knots. Then $\Lambda_\beta$ is Legendrian isotopic to a standard link in Definition \ref{stad.links}.
\end{thm}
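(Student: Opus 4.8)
The plan is to reduce the classification to a finite combinatorial problem about braid words whose quivers are mutation equivalent to a fixed Dynkin type. First I would normalize $\beta$. Since $Q_\beta$ is connected (the hypothesis that $\Lambda_\beta$ contains no split union of knots rules out disconnected quivers) and of finite type, we may apply the machinery of Section~3: by Proposition~\ref{Mainquiver}(1) — which must hold, since scenario~(2) would force infinite type by Proposition~\ref{3.4} — there is an admissible \emph{concordance} $\Lambda_\gamma\to\Lambda_\beta$ with $Q_\gamma$ of finite type, and in fact the proof of Proposition~\ref{Mainquiver} goes through Lemma~\ref{A sub-tree} and Lemma~\ref{3.13}, which repeatedly strip strands off $\beta$ via R3/R1/$\rho$ moves (all Legendrian isotopies, hence $\Lambda_\gamma$ is Legendrian \emph{isotopic} to $\Lambda_\beta$, not merely concordant) until we reach a braid word whose quiver is a Dynkin quiver on few strands. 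The key point I want to extract is: after Legendrian isotopy we may assume $\beta$ is a $2$-strand word when $Q_\beta$ is of type $\mathrm{A}$, and a $3$-strand word of the shape dictated by Assumption~\ref{mainassumption} when $Q_\beta$ is of type $\mathrm{D}$ or $\mathrm{E}$.

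Next I would treat the three Dynkin families. For type $\mathrm{A}_r$: a $2$-strand braid word is $s_1^{k}$ for some $k$, and the quiver $Q_{s_1^k}$ is the $\mathrm{A}_{k-1}$ quiver, so $k=r+1$ and $\Lambda_\beta$ is the standard $\mathrm{A}_r$ link by definition. For types $\mathrm{D}$ and $\mathrm{E}$ we are reduced to $3$-strand words $\beta = s_1^{a_1}s_2^{b_1}s_1^{a_2}s_2^{b_2}\cdots$ (after the normalization of Assumption~\ref{mainassumption}, and with all $b_i=1$, $m\le 3$ in the finite-type case, since $m\geq 4$ or any $b_i\geq 2$ already produced an acyclic infinite-type subquiver in the proof of Proposition~\ref{Mainquiver}). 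So $\beta$ is, up to cyclic rotation and opposite word, $s_1^{a_1}s_2s_1^{a_2}s_2$ or $s_1^{a_1}s_2s_1^{a_2}s_2s_1^{a_3}s_2$. I would then compute the quiver $Q_\beta$ as a brick diagram in each case and pin down which $(a_i)$ give a Dynkin quiver:
\begin{itemize}
\item the two-block word $s_1^{a_1}s_2s_1^{a_2}s_2$ with $\min\{a_1,a_2\}=1$, say $a_1=1$, reduces by R3/R1 to a $2$-strand word, giving type $\mathrm{A}$ — the genuinely $\mathrm{D}$/$\mathrm{E}$ case needs $\min\{a_1,a_2\}\geq 2$, forcing (by the infinite-type calculations already done) $\min\{a_1,a_2\}=2$, and then $s_1^{a}s_2s_1^2s_2$ has quiver $\mathrm{D}_{a+1}$ (for $a\geq 2$), while $s_1^3s_2s_1^3s_2$, $s_1^4s_2s_1^3s_2$, $s_1^5s_2s_1^3s_2$ give $\mathrm{E}_6,\mathrm{E}_7,\mathrm{E}_8$, and $s_1^a s_2 s_1^3 s_2$ with $a\geq 6$ gives $\mathrm{E}_{a+3}$ which is infinite type (the affine $\tilde{\mathrm{E}}_8=\mathrm{E}_9$ already appears), excluding those;
\item the three-block word is handled by the computations in \S\ref{sec.3.2}: if two of the $a_i$ equal $2$ it is Legendrian isotopic to the two-block word $s_2 s_1^3 s_2 s_1^{a_3+1}$ (hence reduces to the previous bullet), and if two exceed $2$ it dominates $s_1^2s_2^2s_1^2s_2^2$, so is infinite type — leaving no new finite-type braids.
\end{itemize}

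Finally I would assemble the cases: matching each surviving normal form against Definition~\ref{stad.links} identifies $\Lambda_\beta$ with exactly one standard $\mathrm{ADE}$ link, using that the quiver mutation class determines the Dynkin type and that the Legendrian-isotopy reductions above are reversible. The main obstacle I anticipate is bookkeeping the Legendrian-isotopy reductions so that they are genuinely isotopies (not just concordances or dominances), since Definition~\ref{stad.links} is a statement about Legendrian isotopy type; I would need to be careful that every step invoked from the proof of Proposition~\ref{Mainquiver} in the finite-type branch uses only the operations $\overset{\mathrm{R1}}{=},\overset{\mathrm{R3}}{=},\overset{\rho}{=},\overset{c}{=}$ and never the pinch $\succ$ — which is exactly why the hypothesis "$\Lambda_\beta$ does not contain a split union of knots" (equivalently, $Q_\beta$ connected) is imposed. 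A secondary subtlety is verifying the quiver computations $Q_{s_1^{a}s_2s_1^2s_2}=\mathrm{D}_{a+1}$ and $Q_{s_1^{a}s_2s_1^3s_2}=\mathrm{E}_{a+3}$ directly from the brick-diagram recipe, and checking that $a\geq 6$ indeed lands outside finite type (contains $\tilde{\mathrm{E}}_8$), which is routine but must be stated.
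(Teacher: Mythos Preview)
Your overall architecture matches the paper's proof: invoke Proposition~\ref{Mainquiver}(1) to replace $\beta$ by an isotopic word with acyclic Dynkin quiver, use Lemma~\ref{3.13} to strip strands down to $n=2$ (type~$\mathrm{A}$) or $n=3$ (types~$\mathrm{D},\mathrm{E}$), and then normalize the $3$-strand word. But there is a genuine gap in your normalization step.

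You assert that in the finite-type branch ``all $b_i=1$'', on the grounds that any $b_i\geq 2$ already forced infinite type in \S\ref{sec.3.2}. That argument only applies when $m=3$. For $m=2$ the proof of Proposition~\ref{Mainquiver} simply observes that $Q_\beta$ is acyclic and stops; it imposes no constraint on $b_1,b_2$ beyond Assumption~\ref{mainassumption}. Concretely, $\beta=s_1^{2}s_2s_1^{3}s_2^{2}$ has $m=2$, $b_2=2$, and $Q_\beta$ is an $\mathrm{E}_6$ quiver (legs of lengths $1,2,2$), so it is finite type yet not of the shape $s_1^{a_1}s_2s_1^{a_2}s_2$ you reduce to. The paper closes this gap with an explicit Legendrian isotopy: starting from $s_1^{a_1}s_2^{b_1}s_1^{a_2}s_2^{b_2}$ with $\min\{b_1,b_2\}=1$ and (say) $a_2=2$, one computes
\[
s_1^{a_1}s_2s_1^2s_2^{b_2}\overset{\mathrm{R3}}{=}s_1^{a_1-1}s_2s_1s_2s_1s_2^{b_2}\overset{\mathrm{R3}}{=}s_1^{a_1-1}s_2s_1^{b_2+1}s_2s_1\overset{\rho}{=}s_1^{a_1}s_2s_1^{b_2+1}s_2,
\]
and similarly for $b_2=1$. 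This is the step you are missing; without it the $\mathrm{D}/\mathrm{E}$ classification is incomplete. (Note that the Dynkin hypothesis guarantees one leg of the trivalent vertex has length~$1$, which is exactly what forces $a_2=2$ up to rotation when $b_1b_2>1$.)

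Two minor corrections: the brick quiver of $s_1^{a}s_2s_1^{2}s_2$ has $a+2$ vertices, so it is $\mathrm{D}_{a+2}$, not $\mathrm{D}_{a+1}$ (cf.\ the standard $\mathrm{D}_r$ word $s_1^{r-2}s_2s_1^2s_2$); and your sentence ``forcing $\min\{a_1,a_2\}=2$'' contradicts the $\mathrm{E}$ examples you list immediately afterward with $\min=3$ --- presumably you mean $\min\in\{2,3\}$ with the two values giving the $\mathrm{D}$ and $\mathrm{E}$ families respectively.
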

\begin{proof} By Proposition \ref{Mainquiver} (1), it suffices to assume that $Q_\beta$ is a  Dynkin quiver. 

If $Q_\beta$ is of type $\mathrm{A}$, we repeated utilize Lemma \ref{3.13} to reduce the number of strands of $\Lambda_\beta$ until it becomes $2$-strand link, which is a standard link of type A.

If $Q_\beta$ is of type $\mathrm{D}$ or $\mathrm{E}$, then it contains  a unique trivalent vertex.  If $n\geq 4$, we can apply Lemma \ref{3.13} to $\beta(1,2)$ or $\beta(n-2,n-1)$, whichever does not contain the trivalent vertex, to reduce $n$ until $n=3$. Note that $\beta$ can be written as \eqref{beta.exp}. Since $[\beta]$ is of finite type, following the discussion in Section \ref{sec.3.2}, we may assume $m=2$ in \eqref{beta.exp}. After necessary rotation, we get
\[
\beta= s_1^{a_1}s_2^{b_1}s_1^{a_2}s_2^{b_2}, \quad \quad \mbox{where } a_1\geq 2, ~a_2 \geq 2, ~ \min\{b_1, b_2\}=1.
\]

 The trivalent vertex in a Dynkin $\mathrm{DE}$ quiver has three legs, at least one of which is of length $1$. For $Q_\beta$, two legs lie in level $1$ and one leg stretches to level $2$. We show that $b_1=b_2= 1$ after suitable Legendrian isotopy. Otherwise, one of the level $1$ legs is of length $1$. Then up to cyclic rotations, we get $a_2=2$. Depending on $b_1=1$ or $b_2=1$, we have the following Legendrian isotopies:
\begin{align*}
    \beta &= 
    s_1^{a_1}s_2s_1^2s_2^{b_2}
    =
    s_1^{{a_1}-1}{\color{blue}{s_1s_2s_1}}s_1s_2^{b_2}
    \stackrel{\textrm{R3}}{=}
    s_1^{a_1-1}s_2s_1{\color{blue}{s_2s_1s_2^{b_2}}}
    \stackrel{\textrm{R3}}{=}
    s_1^{a_1-1}s_2s_1^{b_2+1}s_2{\color{teal}{s_1}}
    \stackrel{\rho}{=}
    s_1^{a_1}s_2s_1^{b_2+1}s_2,    \\
    \beta &= 
    s_1^{a_1}s_2^{b_1}s_1^2s_2
    =
    {\color{teal}{s_1}}s_1^{{a_1}-1}s_2^{b_1}s_1^2s_2
    \stackrel{\rho}{=}
    s_1^{a_1-1}s_2^{b_1}s_1{\color{blue}{s_1s_2s_1}}
    \stackrel{\textrm{R3}}{=}
    s_1^{a_1-1}{\color{blue}{s_2^{b_1}s_1s_2}}s_1s_2
    \stackrel{\textrm{R3}}{=}
    s_1^{a_1}s_2s_1^{b_1+1}s_2.
\end{align*}
Eventually, after necessary cyclic notations, we get the standard links. 
\end{proof}

\begin{defn} Let $\beta$ be an $n$-strand braid word and let $\gamma$ be an $m$-strand braid word. Denote by $\gamma^{\#_j}$ the word obtained from $\gamma$ via $s_i\mapsto s_{i+j}$. 

The \emph{connect sum} of $\beta$ and $\gamma$ is the braid word $\beta \# \gamma:=\beta\gamma^{\#_{n-1}}$.

The  \emph{split union} of $\beta$ and $\gamma$ is the braid word $\beta \sqcup \gamma:=\beta\gamma^{\#_{n}}$.

Note that $\left[\beta \# \gamma\right] \in \Br_{n+m-1}^+$ and $[\beta\sqcup\gamma] \in \Br_{n+m}^+$.
\end{defn}

The connect sum of two positive braid links is again a positive braid link. By \cite{EV}, positive braid links attain a unique maximum tb Legendrian representative. The connect sum of two links is well-defined once specifying which components to attach the $1$-handle. Once well-defined, the connect sum is associative and commutative.

\begin{rmk} 
Below is  a list of the numbers of components for the standard $\mathrm{ADE}$ links.
\begin{center}
    \begin{tabular}{|c|c|c|} \hline
        \rule{0pt}{2.3ex} knots & 2-component links & 3-component links  \\[0.045cm] \hline
        \rule{0pt}{2.3ex} $\mathrm{A}_\text{even}$, $\mathrm{E}_6$, $\mathrm{E}_8$ & $\mathrm{A}_\text{odd}$, $\mathrm{D}_\text{odd}$, $\mathrm{E}_7$ & $\mathrm{D}_\text{even}$\\[0.045cm] \hline
    \end{tabular}
\end{center}
\end{rmk}

\begin{thm}\label{4.6} If $\beta$ is of finite type, then $\Lambda_\beta$ is Legendrian isotopic to a split union of unknots and connect sum of standard $\mathrm{ADE}$ links.
\end{thm}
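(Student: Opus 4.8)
The plan is to induct on the number of strands $n$ of $\beta$, using Theorem \ref{Dynkin quiver} as the engine for the connected case and a decomposition step for the disconnected case. First observe that since $[\beta]$ is of finite type, $Q_\beta$ is mutation equivalent to a disjoint union of Dynkin quivers; because quiver mutation preserves the number of connected components, $Q_\beta$ itself is a disjoint union $Q_1\sqcup\dotsb\sqcup Q_k$ of connected quivers, each $Q_j$ mutation equivalent to a single Dynkin quiver. I would then split into three cases according to the shape of $Q_\beta$.

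\emph{$Q_\beta$ empty.} Then every generator occurs at most once in $\beta$, so after positive Markov destabilizations $\Lambda_\beta$ is a split union of unknots. \emph{$Q_\beta$ connected and non-empty.} Here I claim $\Lambda_\beta$ contains no split union of knots: a splitting sphere for the rainbow closure would separate the strands into $\{1,\dotsc,i\}$ and $\{i+1,\dotsc,n\}$ with $s_i$ absent (or would leave a free strand at one end), and then Lemma \ref{quiverdeg}(1) forces $Q_\beta$ to be disconnected (or $\beta$ to be effectively on fewer strands together with a split unknot), contradicting connectedness. Hence Theorem \ref{Dynkin quiver} applies and $\Lambda_\beta$ is Legendrian isotopic to a standard $\mathrm{ADE}$ link.

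\emph{$Q_\beta$ disconnected with at least one non-empty component.} As in the disconnected-quiver argument at the end of Section \ref{sec 3}, there is a unique level $1\le i<n$ with no quiver arrow between levels $i$ and $i+1$. The key step is to promote the pinch statement used there to a genuine Legendrian isotopy: I claim $\Lambda_\beta$ is Legendrian isotopic to $\Lambda_{\beta'}\#\Lambda_{\beta''}$ or to $\Lambda_{\beta'}\sqcup\Lambda_{\beta''}$, where $Q_{\beta'}$ and $Q_{\beta''}$ partition the connected components of $Q_\beta$ (so both $[\beta']$ and $[\beta'']$ are again of finite type, with strictly fewer strands). If $s_i$ or $s_{i+1}$ occurs at most once, Lemma \ref{quiverdeg}(1) gives this directly: zero occurrences yield a split union, a single occurrence yields a connect sum along the shared strand. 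Otherwise both occur at least twice, so by Lemma \ref{quiverdeg}(2) the subword $\beta(i,i+1)$ contains no intertwining pair, which forces it to have at most two maximal $s_i$-blocks (up to exchanging $i\leftrightarrow i+1$), i.e.\ to be of the form $s_i^{a}s_{i+1}^{b}$, $s_i^{a}s_{i+1}^{b}s_i^{c}$, or $s_{i+1}^{b}s_i^{a}s_{i+1}^{c}$. Using that every $s_j$ with $j\le i-1$ commutes with every $s_\ell$ with $\ell\ge i+1$, together with cyclic rotations that merge the two outer $s_i$-blocks, one rewrites $\beta$ as $\beta'\,(\beta'')^{\#_i}$ with $\beta'$ a word in $s_1,\dotsc,s_i$ and $\beta''$ a word in $s_1,\dotsc,s_{n-1-i}$, exhibiting the connect sum. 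In every sub-case the inductive hypothesis applies to $\Lambda_{\beta'}$ and $\Lambda_{\beta''}$; since connect sum and split union are associative and commutative and $X\#(\text{unknot})=X$, the resulting link $\Lambda_\beta$ is again a split union of unknots and connect sums of standard $\mathrm{ADE}$ links.

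The main obstacle is the last sub-case of the disconnected case: converting the purely combinatorial ``no intertwining pair'' condition into an \emph{honest} Legendrian isotopy displaying $\Lambda_\beta$ as a connect sum, rather than the weaker dominance/pinch relation that sufficed in Section \ref{sec 3}. The delicate point is that the cyclic rotations used to merge the outer $s_i$-blocks act on the whole braid word, so one must verify they can be carried out without distorting the contributions of the remaining generators to $\Lambda_\beta$ (and to its quiver); handling this bookkeeping carefully — or, alternatively, quoting a normal-form result for reducible positive braids — is where the real work lies.
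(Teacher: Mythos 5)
Your proposal is correct and follows essentially the same route as the paper's proof: identify the levels where $Q_\beta$ disconnects, split $\beta$ there into connect sums (using the no-intertwining block form of $\beta(i,i+1)$ together with commutations and cyclic rotations) and split unions (empty levels), and then apply Theorem \ref{Dynkin quiver} to the connected pieces. The only small caveats are that the single-occurrence-of-$s_i$ case is a geometric claim which the paper establishes by an explicit front isotopy (rotation plus R1/R2/R3) rather than being "given directly" by the combinatorial Lemma \ref{quiverdeg}(1), and your closing worry about the rotations is harmless here, since cyclic rotation is a Legendrian isotopy and preserves the mutation class, which is all this theorem needs.
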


\begin{proof}  The vertices on each level of $Q_\beta$ form a type $\mathrm{A}$ quiver.
If $Q_\beta$ is disconnected, then  we have
\begin{enumerate}
\item two adjacent levels of $Q_\beta$ have vertices but no arrows in between; and/or  
\item a level of $Q_\beta$ has no vertex.
\end{enumerate}

For (1), after necessary rotation, we get  $\beta(i, i+1) = s_i^a s_{i+1}^b$ for some $i$. We may further commute $s_1,\dotsb, s_{i-1}$ with $s_{i+2},\dotsb, s_{n-1}$, obtaining \[\beta = \beta(1,i) \beta(i+1,n-1).\] Hence, $\beta$ is a connect sum of two braid words. 

For (2), we get $\beta(i,i) = s_i$ or empty  for some $i$. If it is empty, then 
\[\beta = \beta(1,i-1) \beta(i+1,n),\] which is a split union of two braid words. If  $\beta(i,i) = s_i$, then the braid is a connect sum via the following Legendrian isotopy:
\[
\begin{tikzpicture}[baseline=20,scale=0.9]
\draw [teal](0,-0.1) rectangle (0.5,0.35);
\draw (0,0.4) rectangle (0.5,0.85);
\draw (1,-0.1) rectangle (1.5,0.35);
\draw [teal](1,0.4) rectangle (1.5,0.85);
\draw (0.5,0.25) -- (1,0.5);
\draw (0.5,0.5) -- (1,0.25);
\draw (0.5,0.75) -- (1,0.75);
\draw (0.5,0) -- (1,0);
\draw (1.5,0.75) to [out=0,in=180] (1.75,0.875) to [out=180,in=0] (1.5,1)--(0,1) to [out=180,in=0] (-0.25,0.875) to [out=0,in=180] (0,0.75);
\draw (1.5,0.5) to [out=0,in=180] (2,0.875) to [out=180,in=0] (1.5,1.25) -- (0,1.25) to [out=180,in=0] (-0.5,0.875) to [out=0,in=180] (0,0.5);
\draw (1.5,0.25) to [out=0,in=180] (2.25,0.875) to [out=180,in=0] (1.5,1.5) -- (0,1.5) to [out=180,in=0] (-0.75,0.875) to [out=0,in=180] (0,0.25);
\draw (1.5,0) to [out=0,in=180] (2.5,0.875) to [out=180,in=0] (1.5,1.75) -- (0,1.75) to [out=180,in=0] (-1,0.875) to [out=0,in=180] (0,0);
\end{tikzpicture} \quad \overset{\rho}{\rightsquigarrow} \quad 
\begin{tikzpicture}[baseline=20,scale=0.9]
\draw [teal] (2,-0.1) -- (2.5,-0.1) -- (2.5,0.35) -- (2,0.35);
\draw (2,-0.1) -- (1.5,-0.1) -- (1.5,0.35) -- (2,0.35);
\draw (0.5,0.4) -- (1,0.4) -- (1,0.85) -- (0.5,0.85);
\draw [teal] (0.5,0.4) -- (0,0.4) -- (0,0.85) --  (0.5,0.85);
\draw (1,0.75) -- (2.5,0.75) to [out=0,in=180] (2.75,0.875) to [out=180,in=0] (2.5,1)--(0,1) to [out=180,in=0] (-0.25,0.875) to [out=0,in=180] (0,0.75);
\draw (0,0.25) -- (1,0.25) to [out=0,in=180] (1.5,0.5) -- (2.5,0.5) to [out=0,in=180] (3,0.875) to [out=180,in=0] (2.5,1.25) -- (0,1.25) to [out=180,in=0] (-0.5,0.875) to [out=0,in=180] (0,0.5);
\draw (1,0.5) to [out=0,in=180] (1.5,0.25);
\draw (2.5,0.25) to [out=0,in=180] (3.25,0.875) to [out=180,in=0] (2.5,1.5) -- (0,1.5) to [out=180,in=0] (-0.75,0.875) to [out=0,in=180]  (0,0.25);
\draw (2.5,0) to [out=0,in=180] (3.5,0.875) to [out=180,in=0] (2.5,1.75) -- (0,1.75) to [out=180,in=0] (-1,0.875) to [out=0,in=180] (0,0) -- (1.5,0);
\end{tikzpicture} \quad \rightsquigarrow\quad 
\begin{tikzpicture}[baseline=20,scale=0.9]
\draw (0,0.4) rectangle (0.5,0.85);
\draw (1,-0.1) rectangle (1.5,0.35);
\draw (0.5,0.75) to [out=0,in=180] (0.75,0.875) to [out=180,in=0] (0.5,1) -- (0,1) to [out=180,in=0] (-0.25,0.875) to [out=0,in=180] (0,0.75);
\draw (1.5,0) to [out=0,in=180] (2.5,0.875) to [out=180,in=0] (1.5,1.75) -- (-1,1.75) to [out=180,in=0] (-2,0.875) to [out=0,in=180] (-1,0) --  (1,0);
\draw [red] (0.5,0.25) to [out=0,in=180] (1,0.5) -- (1.5,0.5) to [out=0,in=180] (2,0.875) to [out=180,in=0] (1.5,1.25) -- (-1,1.25) to [out=180,in=0] (-1.5,0.875) to [out=0,in=180] (-1,0.5) -- (0,0.5);
\draw (0.5,0.5) to [out=0,in=180] (1,0.25);
\draw (1.5,0.25) to [out=0,in=180] (2.25,0.875) to [out=180,in=0] (1.5,1.5) -- (-1,1.5) to [out=180,in=0] (-1.75,0.875) to [out=0,in=180] (-1,0.25) -- (0.5,0.25);
\end{tikzpicture}
\]
\[
\overset{\text{R2,R3}}{\rightsquigarrow}\quad
\begin{tikzpicture}[baseline=20,scale=0.9]
\draw (0,0.4) rectangle (0.5,0.85);
\draw (1,-0.1) rectangle (1.5,0.35);
\draw (0.5,0.75) to [out=0,in=180] (0.75,0.875) to [out=180,in=0] (0.5,1) -- (0,1) to [out=180,in=0] (-0.25,0.875) to [out=0,in=180] (0,0.75);
\draw (1.5,0) to [out=0,in=180] (2.5,0.875) to [out=180,in=0] (1.5,1.75) -- (-1,1.75) to [out=180,in=0] (-2,0.875) to [out=0,in=180] (-1,0) -- (1,0);
\draw [red] (-1,0.25) to [out=0,in=180] (-0.25,0.625) to [out=180,in=0] (-0.625,0.75) to [out=180,in=0] (-1,0.625) to[out=0,in=180] (-0.5,0.5) -- (0,0.5);
\draw (0.5,0.5) to [out=0,in=180] (1,0.25);
\draw (1.5,0.25) to [out=0,in=180] (2.25,0.875) to [out=180,in=0] (1.5,1.5) -- (-1,1.5) to [out=180,in=0] (-1.75,0.875) to [out=0,in=180] (-1,0.25);
\end{tikzpicture}\quad 
\overset{\text{R1}}{\rightsquigarrow}\quad 
\begin{tikzpicture}[baseline=20,scale=0.9]
\draw (0,0.4) rectangle (0.5,0.85);
\draw (1,-0.1) rectangle (1.5,0.35);
\draw (0.5,0.5) to [out=0,in=180] (1,0.25);
\draw (0.5,0.75)  to [out=0,in=180] (0.75,0.875) to [out=180,in=0] (0.5,1) -- (0,1) to [out=180,in=0] (-0.25,0.875) to [out=0,in=180] (0,0.75);
\draw (1.5,0.25) to [out=0,in=180] (2,0.875) to [out=180,in=0] (1.5,1.25) -- (0,1.25) to [out=180,in=0] (-0.5,0.875) to [out=0,in=180] (0,0.5);
\draw (1.5,0) to [out=0,in=180] (2.25,0.875) to [out=180,in=0] (1.5,1.5) -- (0,1.5) to [out=180,in=0] (-0.75,0.875) to [out=0,in=180] (0,0) -- (1,0);
\end{tikzpicture}
\quad \rightsquigarrow\quad
\begin{tikzpicture}[baseline=20,scale=0.9]
\draw (0,0.4) rectangle (0.5,0.85);
\draw (1,0.15) rectangle (1.5,0.6);
\draw (0.5,0.5) -- (1,0.5);
\draw (0.5,0.75) -- (1.5,0.75) to [out=0,in=180] (1.75,0.875) to [out=180,in=0] (1.5,1) -- (0,1) to [out=180,in=0] (-0.25,0.875) to [out=0,in=180] (0,0.75);
\draw (1.5,0.5) to [out=0,in=180] (2,0.875) to [out=180,in=0] (1.5,1.25) -- (0,1.25) to [out=180,in=0] (-0.5,0.875) to [out=0,in=180] (0,0.5);
\draw (1.5,0.25) to [out=0,in=180] (2.25,0.875) to [out=180,in=0] (1.5,1.5) -- (0,1.5) to [out=180,in=0] (-0.75,0.875) to [out=0,in=180] (0,0.25)--(1,0.25);
\end{tikzpicture}
\]
Each quiver component is Legendrian isotopic to the standard $\mathrm{ADE}$ links. There could also be a split union of unknot for every pair of consecutive levels $\beta(i,i)$ and $\beta(i+1,i+1)$ that are both empty. We complete the proof.
\end{proof}

\section{Applications} \label{sec.5.app}

The infinitely many fillings arising from the aperiodic DT transformation imply that the self-concordance monoid and the fundamental group of the space of Legendrian embeddings are infinite.

\begin{cor} For any braid word  $\beta$, if the \emph{DT} transformation for $Q_\beta$ is aperiodic, then
\begin{enumerate}[label*=\emph{(\arabic*)}]
    \item the Lagrangian self-concordance monoid $\text{Con}(\Lambda_\beta)$ has a subgroup $\bbZ$;
    \item the fundamental group of Legendrian embeddings  $\pi_1\mathcal{L} eg(\Lambda_\beta)$ has a subgroup $\bbZ$.
\end{enumerate}{}
\end{cor}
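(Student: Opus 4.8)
The plan is to realize the required copy of $\bbZ$, in both structures at once, as the cyclic group generated by the full cyclic rotation $\R$. Recall from Section \ref{sec 2} that $\R$ is a Legendrian loop based at $\Lambda_\beta$, so it determines a class $[\R]\in\pi_1\mathcal{L}eg(\Lambda_\beta)$; its trace (the suspension of the ambient contact isotopy realizing $\R$) is an exact Lagrangian self-concordance of $\Lambda_\beta$ assembled from cyclic rotations and $\mathrm{R}3$ moves, hence an admissible self-concordance, and so it defines a class that I again denote $[\R]\in\mathrm{Con}(\Lambda_\beta)$. Since $\R$ is a loop, the reversed loop supplies an inverse self-concordance and the two concatenations are Hamiltonian isotopic to the trivial cylinder; thus $[\R]$ lies in the unit group $\mathrm{Con}(\Lambda_\beta)^{\times}$. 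It therefore suffices to show that $[\R]$ has infinite order in $\pi_1\mathcal{L}eg(\Lambda_\beta)$ and in $\mathrm{Con}(\Lambda_\beta)$.

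First I would invoke the functoriality of augmentation varieties under admissible cobordisms from \cite[Theorem 1.4]{GSW} (see also \cite[\S 5]{GSW}): an admissible self-concordance of $\Lambda_\beta$ induces an automorphism of $\Aug(\Lambda_\beta)$, Legendrian-isotopic cobordisms induce the same automorphism, and composition of cobordisms corresponds to composition of the induced automorphisms. This produces group homomorphisms
\[
\pi_1\mathcal{L}eg(\Lambda_\beta)\longrightarrow \Aut\!\bigl(\Aug(\Lambda_\beta)\bigr)
\qquad\text{and}\qquad
\mathrm{Con}(\Lambda_\beta)^{\times}\longrightarrow \Aut\!\bigl(\Aug(\Lambda_\beta)\bigr),
\]
each carrying $[\R]$ to the induced automorphism $\Phi_\R$. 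Then I would apply Lemma \ref{6.7}, which gives $\Phi_\R=\DT^{-2}$ on $\Aug(\Lambda_\beta)$. The hypothesis that $\DT$ is aperiodic immediately gives that $\DT^{-2}$ is aperiodic as well, since $\DT^{-2k}=\id$ would make $\DT$ periodic. Hence $\Phi_\R$ has infinite order in $\Aut(\Aug(\Lambda_\beta))$, so $[\R]$ has infinite order in both groups above, and the infinite cyclic subgroup $\langle[\R]\rangle\cong\bbZ$ it generates sits inside $\pi_1\mathcal{L}eg(\Lambda_\beta)$ and inside $\mathrm{Con}(\Lambda_\beta)$, proving (1) and (2).

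The step I expect to require the most care is the precise construction of the two monodromy homomorphisms. One must check that $\Aug(\Lambda_\beta)$ is functorial along the entire \emph{space} of Legendrian embeddings, so that a loop of embeddings acts by a well-defined automorphism through the canonical-up-to-homotopy identifications implicit in \cite{GSW}, and one must verify that the self-concordance obtained as the trace of $\R$ induces exactly $\Phi_\R$ (as defined in Section \ref{sec 2}) rather than some conjugate of it. Both statements are essentially the filling functoriality of \cite{GSW} applied to concordances, combined with Lemma \ref{6.7}, but they should be spelled out. The remaining ingredients --- invertibility of $[\R]$ in $\mathrm{Con}(\Lambda_\beta)$, the implication that $\DT$ aperiodic forces $\DT^{2}$ aperiodic, and the passage from ``infinite order'' to ``contains a subgroup $\bbZ$'' --- are routine.
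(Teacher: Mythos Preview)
Your proposal is correct and follows essentially the same route as the paper: both take the full cyclic rotation $\R$ as the generator, invoke Lemma \ref{6.7} to identify its induced action with $\DT^{-2}$, and use aperiodicity of $\DT$ to conclude infinite order.

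There is one small packaging difference worth noting, and it speaks directly to the worry you flag at the end. Rather than building a homomorphism $\mathrm{Con}(\Lambda_\beta)^{\times}\to\Aut(\Aug(\Lambda_\beta))$ and checking it is well defined on Hamiltonian-isotopy classes, the paper argues via the fixed admissible filling $L_0$ from Theorem \ref{5.11}: if the self-concordance $\R^m$ were Hamiltonian isotopic to the trivial cylinder, then $L_m=\R^m\circ L_0$ and $L_0$ would be Hamiltonian isotopic fillings and hence induce the same cluster chart, contradicting Theorem \ref{5.11}. This only uses that Hamiltonian-isotopic admissible \emph{fillings} induce the same chart (already in \cite[Theorem 1.4]{GSW}), so it sidesteps the ``monodromy homomorphism'' issue entirely. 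For part (2) the paper then just composes with the trace map $\pi_1\mathcal{L}eg(\Lambda_\beta)\to\mathrm{Con}(\Lambda_\beta)$ and reduces to (1). Your approach buys a cleaner conceptual statement (an honest action on $\Aug$), while the paper's buys a shorter proof with fewer well-definedness checks.
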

\begin{proof}
(1) Following the convention in the proof of Theorem \ref{5.11}, if the graph of $\cR^m$ is Hamiltonian isotopic to the trivial cylinder, then its $L_m$ and $L_0$ would induce the same chart, but they do not by Theorem \ref{5.11}.

(2) Consider the monoid morphism 
$\pi_1\mathcal{L} eg(\Lambda_\beta) \rightarrow \text{Con}(\Lambda_\beta).$
Two loops are distinct if they graph distinct concordances.
\end{proof}

\begin{rmk}
The proof of Corollary \ref{rainbowinfinite} implies that the Lagrangian self-concordance monoid of $\Lambda_\beta$ contains a $\bbZ$-subgroup. Meanwhile, the results in \cite{CasalsGao,CZ} manifest additional strength in those cases. In \cite{CasalsGao}, the Lagrangian self-concordance monoid of $\Lambda_{(3,6)}$ has a factor of $\text{PSL}(2,\bbZ)$, and that of $\Lambda_{(4,4)}$ has a factor of the mapping class group $M_{0,4}$ (spherical braid group in $4$-strands mod center), yielding the monoid of each link has a subgroup of exponential-growth. The construction in \cite{CZ} uses Legendrian weaves instead of Legendrian loops, which can be extended beyond positive braid Legendrian links. Also, the Lagrangian disks in the Polterovich surgery can be visualized explicitly.
\end{rmk}

The infinitely many fillings can be used to construct Weinstein $4$-manifolds or Stein surfaces that admit infinitely many closed exact Lagrangian surfaces. Let $\Lambda\subset \bbR^3\subset S^3$ be a Legendrian knot with infinitely many exact Lagrangian fillings. Attaching a Weinstein $2$-handle along $\Lambda$, we obtain a Weinstein manifold \cite{weinstein1991}, which is a Stein surface with the homotopy type of $S^2$ \cite{Eliashberg90,Gompf}. Using this construction, it was first shown in \cite{CasalsGao} that there exists a Stein surface which is \emph{homotopic to the $2$-sphere} and has infinitely many closed exact Lagrangian surfaces of \emph{higher genus}. These Lagrangian surface are not related by symplectic Dehn twists \cite{Arnoldmilnor,Seidelknotted}.

Prior to the emergence of the infinite filling approach, closed exact Lagrangian surfaces were constructed in some cases.
\begin{enumerate}[leftmargin=*]
    \item Seidel proved infinitely many Lagrangian $2$-spheres in $A_k$-Milnor fibers, $k\geq 3$ \cite{Seidelgraded}.
    \item Keating constructed an exact Lagrangian torus in the $A_{p,q,r}$-Milnor fiber, that cannot be expressed by $2$-spheres in the Fukaya category \cite{Keating}.
    \item Vianna constructed infinitely many exact Lagrangian tori in $\mathbb{C}\mathbb{P}^2\setminus \{\textrm{smooth cubic}\}$ \cite{vianna2014}. (The original result states that there are infinitely many monotone Lagrangian tori in $\mathbb{C}\mathbb{P}^2$, but one can delete the smoothing of the toric divisor to adapt to the exact setting.) Later the result was generalized to del Pezzo surfaces \cite{Viannadelpezzo}.
\end{enumerate}
The Weinstein manifold in (1) is homotopic to a bouquet of spheres. The Weinstein manifold in (2) or (3) is not homotopic to a bouquet of spheres, i.e. $\pi_1(\mathbb{C}\mathbb{P}^2\setminus \{\textrm{smooth cubic}\}) =\bbZ_3$.

By \cite{CasalsGao}, there are Weinstein manifolds with the homotopy type $S^2$ and infinitely many Lagrangian fillings of genus $g\geq 7$. Using the examples constructed in this section, we can lower the genus bound to $g\geq 4$.

\begin{cor}\label{application Weinstein}
For any $g\geq 4$, there is a Stein surface that is homotopic to $S^2$ and contains infinitely many exact Lagrangian surfaces of genus $g$ that are smoothly isotopic but non-Hamiltonian isotopic.
\end{cor}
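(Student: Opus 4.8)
The plan is to build a Legendrian knot $\Lambda$ whose rainbow closure is a positive braid link of infinite type and whose filling genus is exactly $g$, then attach a Weinstein $2$-handle along $\Lambda$ and invoke the Main Theorem together with the standard homotopy-theoretic computation for handle attachment. First I would recall that attaching a Weinstein $2$-handle to $(D^4,\Lambda)$ along a Legendrian knot $\Lambda\subset S^3$ produces a Weinstein $4$-manifold which is a Stein surface with the homotopy type of $S^2$ (by \cite{weinstein1991,Eliashberg90,Gompf}); the cocore of the handle, capped off by an exact Lagrangian filling $L$ of $\Lambda$, yields a closed exact Lagrangian surface whose genus equals that of $L$. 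Distinct Hamiltonian-isotopy classes of fillings of $\Lambda$ give distinct Hamiltonian-isotopy classes of closed Lagrangian surfaces, while a standard argument (as in \cite{CasalsGao}) shows all these surfaces are smoothly isotopic. Thus it suffices to exhibit, for each $g\ge 4$, a \emph{knot} $\Lambda_\beta$ that is a positive braid Legendrian link of infinite type admitting an exact Lagrangian filling of genus exactly $g$.

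The genus of a decomposable filling obtained by pinching all crossings of an $n$-strand positive braid word $\beta$ of length $\ell$ with $c$ link components is $g = \tfrac{1}{2}(\ell - n - c + 1)$ by the standard Euler characteristic count; for a knot $(c=1)$ this is $g=\tfrac12(\ell-n)$. So I would look for a one-parameter family of positive braid \emph{knots} of infinite type with $\ell - n = 2g$. A natural candidate is a family built on the $3$-strand braid words appearing in the $\mathrm{E}_{r+3}$ example $s_1^r s_2 s_1^3 s_2$ together with extra full twists or extra copies of $s_2$: e.g. take $\beta = s_1^{r} s_2 s_1^3 s_2$ on $n=3$ strands, which has $\ell = r+5$, so $g = \tfrac12(r+2)$; choosing $r = 2g-2 \ge 6$ gives a knot (one checks the closure of $s_1^{\text{even}}s_2 s_1^3 s_2$ is a knot) of type $\mathrm{E}_{r+3}$ with $r+3 \ge 9$, which is of infinite type (it is one of the exceptional affine or wild Dynkin types), and $\Lambda_\beta$ is not a split union of unknots or connect sum of standard $\mathrm{ADE}$ links. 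By the Main Theorem $\Lambda_\beta$ then has infinitely many pairwise non-Hamiltonian-isotopic exact Lagrangian fillings, and the decomposable one among them has genus exactly $g$. (One should double-check the genus of the \emph{other} fillings: admissible fillings produced by $\R^m\circ L_0$ are related to $L_0$ by Legendrian self-concordances, hence have the same Euler characteristic and therefore the same genus $g$, so the whole infinite family sits in genus $g$.)

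The bookkeeping step I expect to be the main obstacle is pinning down, for each $g\ge 4$, a member of such a family that is simultaneously (a) the closure of a positive braid that is genuinely of \emph{infinite type} — so one must verify $Q_\beta$ is not mutation equivalent to a Dynkin quiver, which for the $\mathrm{E}_{r+3}$ family is immediate once $r+3 \ge 9$ — and (b) a \emph{knot} rather than a multi-component link, since the genus formula and the handle-attachment picture are cleanest there, and (c) has the Euler characteristic arithmetic landing exactly on the target genus with no off-by-one slippage between $g$, $\ell$, $n$, and the number of Markov (de)stabilizations used. The lower bound $g\ge 4$ is then simply the smallest $g$ for which the construction yields an infinite-type (here: $\mathrm{E}_{\ge 9}$) quiver: $g=4$ forces $r=6$, quiver $\mathrm{E}_9$, which is affine $\widehat{\mathrm{E}}_8$ and hence of infinite type, whereas $g=3$ would give $\mathrm{E}_8$, of finite type, so the method genuinely stops at $g=4$ and this is why the bound improves on \cite{CasalsGao}'s $g\ge 7$ but not beyond.
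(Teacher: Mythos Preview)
Your overall architecture is exactly the paper's: produce a positive-braid Legendrian \emph{knot} of infinite type with filling genus $g$, attach a Weinstein $2$-handle, and cap fillings with the cocore. The bookkeeping, however, breaks in two places, and together they cost you the crucial case $g=4$.

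First, the parity check is inverted. For $\beta=s_1^{r}s_2s_1^3s_2\in\Br_3^+$, the induced permutation in $S_3$ is $(12)^{r}(23)(12)(23)$. When $r$ is even this equals $(23)(12)(23)=(13)$, a transposition, so the closure has \emph{two} components; when $r$ is odd it is the $3$-cycle $(123)$ and the closure is a knot. (This matches the paper's Remark that $\mathrm{E}_6,\mathrm{E}_8$ are knots while $\mathrm{E}_7$ is a $2$-component link.) Your ``$s_1^{\text{even}}$ gives a knot'' is therefore false.

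Second, the genus formula is off by one. From $\chi=n-\ell$ and $\chi=2-2g-c$ one gets $g=\tfrac12(\ell-n-c+2)$; for a knot $g=\tfrac12(\ell-n+1)$, not $\tfrac12(\ell-n)$. (Equivalently, $\mathrm{tb}=\ell-n$ and $\mathrm{tb}=2g-1$ by \cite{chantraine2010}.) For $\beta=s_1^{r}s_2s_1^3s_2$ with $\ell=r+5$, $n=3$ and $r$ odd (so that it is a knot), this gives $g=\tfrac12(r+3)$.

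Putting the two corrections together: you need $r$ odd \emph{and} $r+3\ge 9$, i.e.\ $r\ge 7$, hence $g\ge 5$. At $g=4$ you would need $r=5$, but $\mathrm{E}_{8}$ is of finite type, so your family genuinely misses $g=4$. The paper closes this gap with a different braid: $\beta=s_1s_2\,s_1^2s_2^2s_1^2s_2^2\in\Br_3^+$, whose permutation $(12)(23)$ is a $3$-cycle (knot), whose $\mathrm{tb}=10-3=7$ gives $g=4$, and which dominates $s_1^2s_2^2s_1^2s_2^2$ with quiver $\tilde{\mathrm{D}}_5$ (Lemma~\ref{basicbraidsinf}), hence is of infinite type. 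For $g\ge 5$ either your corrected $\mathrm{E}_{r+3}$ family or the paper's $s_1^{2g-8}\beta$ works.
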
{}
\begin{proof}
By Lemma \ref{basicbraidsinf} (1), $\beta_0 = s_1^2s_2^2s_1^2s_2^2\in \Br_3^+$ has infinite fillings. Then $\beta = s_1s_2\beta_0$ gives rise to a Legendrian knot with tb $= 10-3 =7$, and then its fillings have genus $g=4$ \cite{chantraine2010}. Attach a a Weinstein $2$-handle along $\Lambda_{\beta}$. The concatenation of a Lagrangian filling with the core of the Weinstein handle produces a closed exact Lagrangian surface. These Lagrangian surfaces are distinct, following the same argument as in \cite[Corollary 1.10]{CasalsGao}. For higher $g$, consider $s_1^{2g-8}\beta$.
\end{proof}{}

\begin{rmk}
We do not know if there is a Weinstein manifold homotopic to $S^2$ with infinitely many exact Lagrangian surfaces of genus $g= 0,1,2$ or $3$.
\end{rmk}

We can lower the genus bound of the closed Lagrangian if we are willing to trade off the homotopy type of the Weinstein manifold. Consider Legendrian links with infinitely many Lagrangian fillings. After attaching a Weinstein $2$-handle at each component, we obtain a Weinstein manifold which is homotopic to a bouquet of spheres. The Weinstein structure does not depend on the order of the handle attachment. Previous methods have achieved $g\geq 4$ \cite{CasalsGao,CZ}. Now it is true for any genus.

\begin{cor}
For any $g\in \mathbb{N}$, there is a Stein surface that is homotopic to a bouquet of spheres, and contains infinitely many exact Lagrangian surfaces of genus g that are smoothly isotopic but non-Hamiltonian isotopic.  
\end{cor}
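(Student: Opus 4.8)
The plan is to build, for each $g$, a Weinstein domain obtained from $B^4$ by attaching Weinstein $2$-handles along the components of a Legendrian link that carries infinitely many exact Lagrangian fillings, and to cap those fillings off with the handle cores, producing infinitely many closed exact Lagrangian surfaces of the prescribed genus. I would first dispose of the case $g\ge 1$ using the positive braid examples constructed above. Take the $4$-strand positive braid word $\beta_g=s_1^{2g-1}s_3s_2^2s_1s_3s_2^2$. For $g=1$ this is the $\tilde{\mathrm D}_4$ braid of Lemma \ref{basicbraidsinf}(2), and for $g\ge 2$ its brick quiver $Q_{\beta_g}$ is obtained from $\tilde{\mathrm D}_4$ by lengthening one leg to a path of length $2g-1$; in all cases $Q_{\beta_g}$ is a tree with a $4$-valent vertex, hence acyclic and not a Dynkin diagram, hence of infinite type. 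By Corollary \ref{rainbowinfinite} the Legendrian link $\Lambda_{\beta_g}$ therefore admits infinitely many admissible fillings, and by the proof of that corollary (Theorems \ref{5.11} and \ref{6.13}, the $\DT$ transformation being aperiodic) one may take them pairwise non-Hamiltonian isotopic of the form $L_m=\R^m\circ L_0$.

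The genus is a bookkeeping computation. The underlying permutation of $\beta_g$ is trivial, so the link closure of $\beta_g$ has $b=4$ components, and every exact Lagrangian filling of $\Lambda_{\beta_g}$ has Euler characteristic $\chi=n-\ell=4-(2g+6)=-2g-2$, obtained by pinching all $2g+6$ crossings and capping the resulting $4$-component unlink with minimum cobordisms; being connected with $4$ boundary circles, it has genus $\tfrac12(2-b-\chi)=g$ (the slice-Thurston--Bennequin count of \cite{chantraine2010}). Now attach a Weinstein $2$-handle along each of the $b=4$ components of $\Lambda_{\beta_g}$ with its Legendrian framing; this gives a Weinstein domain $W$, carrying a compatible Stein structure, and because $B^4$ is contractible and every attaching circle bounds a disk in it, $W$ is homotopy equivalent to $\bigvee_{b}S^2$, a bouquet of spheres. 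Gluing the $b$ Lagrangian handle cores (one disk bounding each component of $\Lambda_{\beta_g}$) to $L_m$ yields a closed embedded exact Lagrangian surface $\overline L_m\subset W$ of genus $g$, and all the $\overline L_m$ are smoothly isotopic, since $\R$ is a Legendrian loop and hence $\R^m$ is smoothly a trivial concordance, so that $L_m$ is smoothly isotopic rel boundary to $L_0$.

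The surfaces $\overline L_m$ are pairwise non-Hamiltonian isotopic, and this is the step I expect to be the main obstacle: one has to show that a Hamiltonian isotopy of $W$ carrying $\overline L_m$ to $\overline L_k$ can be arranged compatibly with the co-core disks of the $2$-handles, so that it restricts on the complement of the co-cores (which deformation retracts onto $B^4$) to a Hamiltonian isotopy between the fillings $L_m$ and $L_k$, forcing $L_m$ and $L_k$ to induce the same cluster seed on $\Aug(\Lambda_{\beta_g})$ and contradicting Theorem \ref{5.11}. This is exactly the argument of \cite[Corollary 1.10]{CasalsGao}, which must be transported carefully; the remaining ingredients (the choice of $\beta_g$, the infinite-type verification, the Euler-characteristic bookkeeping) are routine. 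Since $\beta_g$ exists for every $g\ge 1$ and always has $b=4\ge 2$ components, this settles all $g\ge 1$.

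Finally, for $g=0$ the construction cannot be run with a positive braid link, because a positive braid link with planar fiber surface has a Dynkin type-$\mathrm A$ quiver and hence, by Main Theorem \ref{Main}, only finitely many fillings. Here I would instead appeal to Seidel's Lagrangian spheres: the $\mathrm A_3$-Milnor fibre is a Stein surface homotopy equivalent to $\bigvee_3 S^2$ that contains infinitely many Lagrangian $2$-spheres, pairwise smoothly isotopic (they are the standard vanishing cycles) but not Hamiltonian isotopic (they are permuted by the braid-group action of Dehn twists and distinguished by Floer homology) \cite{Seidelgraded}.
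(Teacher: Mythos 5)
Your proposal is correct and follows essentially the same route as the paper: the case $g=0$ is delegated to Seidel's Lagrangian spheres in $A_k$-Milnor fibers, and for $g\geq 1$ one takes a multi-component positive braid Legendrian link whose quiver is acyclic of infinite type, caps its infinitely many fillings $L_m=\R^m\circ L_0$ (Corollary \ref{rainbowinfinite}) with the cores of Weinstein $2$-handles attached along all components, and distinguishes the resulting closed Lagrangians as in \cite[Corollary 1.10]{CasalsGao}. The only difference is the explicit braid family — the paper uses the $3$-strand words $s_1^{2g-2}s_1^2s_2^2s_1^2s_2^2$ built on the $\tilde{\mathrm{D}}_5$ example of Lemma \ref{basicbraidsinf}(1), while you use a $4$-strand family built on the $\tilde{\mathrm{D}}_4$ example of Lemma \ref{basicbraidsinf}(2) — and your quiver, component, and genus bookkeeping checks out.
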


\begin{proof}
The case $g=0$ follows from \cite{Seidelgraded}. For $g\geq 1$, the braid $s_1^{2g-2}s_1^2s_2^2s_1^2s_2^2\in \Br_3^+$ gives a $4$-component Legendrian link with genus $g$.
\end{proof}

\bibliographystyle{alphaurl-a}

\bibliography{biblio}

\end{document}